\def\setliststart#1{\setcounter{\@listctr}{#1}%
  \addtocounter{\@listctr}{-1}}
\font\uj=cmssbx10
\newtheorem{The}{Theorem}[section]
\newtheorem{Cor}[The]{Corollary}
\newtheorem{Lem}[The]{Lemma}
\newtheorem{Pro}[The]{Proposition}
\theoremstyle{definition}
\newtheorem{Def}[The]{Definition}
\theoremstyle{remark}
\newtheorem*{Rem}{Remark}
\numberwithin{equation}{section}
\def\R{\mathbb R}
\def\Z{\mathbb Z}
\def\N{\mathbb N}
\def\T{\mathbb T}
\def\Q{\mathbb Q}
\def\C{\mathbb C}
\def\bG{\mathbf G}
\def\bL{\mathbf L}
\def\bq{\mathbf q}
\def\bp{\mathbf p}
\def\cM{\mathcal M}
\def\cA{\mathcal A}
\def\cN{\mathcal N}
\def\cG{\mathcal G}
\def\cC{\mathcal C}
\def\fS{\mathfrak S}
\def\fB{\mathfrak B}
\def\fM{\mathfrak M}
\def\sL{\mathscr L}
\def\sK{\mathscr K}
\def\sG{\mathscr G}
\def\sC{\mathscr C}
\def\sD{\mathscr D}
\def\sR{\mathscr R}
\def\rN{\mathrm N}
\def\rU{\mathrm U}
\def\bfo{0}
\def\vep{\varepsilon}
\def\rro{\mathbf{r_0}}
\begin{document}
\title{Gevrey genericity of Arnold diffusion in \emph{a priori} unstable Hamiltonian systems}
\author{Qinbo Chen $^{\dag, \ddag}$}
\address{$^\dag$ Department of Mathematics, Nanjing University, Nanjing 210093, China}
\address{$^\ddag$ Morningside Center of Mathematics, Academy of Mathematics and Systems Science, Chinese Academy of Sciences, Beijing 100190, China}
\email{chenqb@amss.ac.cn}

\author{Chong-Qing Cheng *}
\address{* Department of Mathematics, Nanjing University, Nanjing 210093, China}
\email{chengcq@nju.edu.cn}

\subjclass[2010]{37J40, 37J50.}
\begin{abstract}
It is well known  that under generic  $C^r$ smooth  perturbations,  the phenomenon of global instability, known as Arnold diffusion, exists in  \emph{a priori} unstable Hamiltonian systems. In this paper, by using variational methods, we will prove that under generic Gevrey smooth perturbations,  Arnold diffusion still exists  in the \emph{a priori} unstable Hamiltonian systems of two and a half degrees of freedom.
\end{abstract}
\maketitle
\setcounter{tocdepth}{1}
\section{Introduction}\label{introduction}
Throughout this paper, we denote by $\T^n\times\R^n$ the cotangent bundle $T^*\T^n$ of the torus $\T^n$ with $\T=\R/\Z$, and endow  $\T^n\times\R^n$ with its usual coordinates $(q,p)$ where $q=(q_1,\cdots,q_n)$ and $p=(p_1,\cdots,p_n)$.  We also endow the phase space with its canonical symplectic form $\Omega=\sum_{i=1}^ndq_i\wedge dp_i$. A Hamiltonian system is usually a dynamical system governed by the following Hamilton's equations  
$$\dot{q}=\frac{\partial H}{\partial p}, \quad \dot{p}=-\frac{\partial H}{\partial q},\quad  (q,p)\in\T^n\times\R^n$$
where $H(q, p, t)$ is a Hamiltonian function and the dependence on the time $t$ is $1$-periodic, so $t\in\T$. 

The goal of this paper is to present  global instability in a class of Hamiltonians. The problem of the influence of small perturbations on an integrable Hamiltonian system was considered by Poincar\'e to
be \emph{the fundamental problem of Hamiltonian dynamics}. It is customary to consider a nearly-integrable Hamiltonian system of the form  $H=H_0(p)+\vep H_1(q, p, t)$. Notice that for $\vep=0$ such systems do not admit any instability phenomenon. For $0<\vep\ll 1$, the celebrated KAM theory asserts that a set of nearly full measure in the phase space consists of invariant tori carrying quasi-periodic motions, and the oscillation of the action variables $p$ on each KAM torus is at most of order $\sqrt{\vep}$. For $n\geq2$, the complement of the set of the union of KAM tori is connected, so a natural question that arises is whether it is possible to find large evolution of order $1$.  In the celebrated paper  \cite{Ar1964}, Arnold first proposed an example of a nearly-integrable Hamiltonian system with two and a half degrees of freedom, which admits trajectories whose action variables have large oscillation. Moreover, he also conjectured that such an instability phenomenon  occurs in generic nearly-integrable systems. This is known as the Arnold diffusion conjecture, and it has been investigated extensively since then. 

The mechanism of Arnold's original example is based on the existence of a normally hyperbolic invariant cylinder (NHIC) foliated by a family of hyperbolic invariant tori or whiskered tori. The unstable manifold of one torus intersects transversally the stable manifold of another nearby torus. These tori constitute a transition chain along which diffusion takes place. By Nekhoroshev theory, it must be extremely slow. This mechanism has inspired a large number of studies to the Hamiltonians  possessing certain hyperbolic geometric structures. In the literature, such a system is referred to as ``\emph{a priori} unstable", to be distinguished from a nearly-integrable system (i.e. ``\emph{a priori} stable"). There have been many works devoted to the \emph{a priori} unstable systems based on Arnold's  geometric mechanism, and most of them have tried to find transition chains in more general cases \cite{BBB2003,BB2002,Cr2003, FP2001,KL2008,LM2005,Zh2011}, \emph{etc}. However, for a general \emph{a priori} unstable system the transition chain cannot be formed by a continuous family of tori but a Cantorian family, and the size of gaps between the persisting tori could be larger than the size of the intersections of the stable and unstable manifolds. This is known as the \emph{large gap problem}. 

In the last two decades there have been several methods to overcome the large gap problem. Among them there are mainly two methods concerning the genericity of instability: variational  methods and geometric methods.  
 The first attempt to study Arnold's original example in variational viewpoint is by U. Bessi \cite{Bessi1996}. Essential progress has also been made by J. N. Mather. In the celebrated paper \cite{Ma1993}, he developed a powerful variational tool to study the global instability in the framework of convex Lagrangian systems.  In an unpublished manuscript \cite{Ma1995}, he further showed the existence of orbits with unbounded energy in perturbations of a  geodesic flow on  $\T^2$ by a generic time-periodic potential. Based on Mather's variational mechanism, the authors of \cite{CY2004} constructed  diffusing orbits  and proved the $C^r$-genericity ($r$ is finite and suitably large) of Arnold diffusion for the \emph{a priori} unstable systems with two and a half degrees of freedom. On the other hand, several authors have used  geometric methods, which also  apply to  Hamiltonians that are not necessarily convex, to obtain Arnold diffusion.  More precisely, the authors in \cite{DLS2000,DLS2003,DLS2006}  defined the so-called  scattering map  which accounts for the outer dynamics along homoclinic orbits, and  overcame the large gap problem by incorporating in the transition chain new invariant objects, like secondary tori and the stable and unstable manifolds of lower dimensional tori; In \cite{Tr2002}, the author geometrically defined the so-called separatrix map near the normally hyperbolic invariant cylinder,  then he showed in \cite{Tr2004} the existence of diffusion by making full use of the dynamics of this map, and even estimated the optimal diffusion speed of order $\vep/|\log \vep|$ (see also \cite{BBB2003}). Moreover, for the case of \emph{a priori} unstable Hamiltonians with higher degrees of freedom, similar results have also been obtained by variational or geometric methods in \cite{Be2008,CY2009,DLS2016,DT2018,GLS2019,LC2010,Tr2012}.

 The \emph{a priori} stable case poses a new difficulty:  the presence of    multiple resonances. In the paper \cite{Ma2004} (see also \cite{Ma2012}), Mather first made an announcement for systems with two degrees of freedom in the time-periodic case or with three degrees of freedom in the autonomous case, under a series of  cusp-residual conditions. Hence the diffusion problem in this situation was thought to possess only cusp-residual genericity. The complete proof for the autonomous systems with three degrees of freedom appeared in the preprint \cite{Ch2012}, and the main ingredients have been published in the recent works \cite{CZh2016,Ch2017Uniform,Ch2017,Ch2018}. Indeed, the main difficulty in this case arises from the dynamics around  strong double resonances. It is because away from double resonances, one could apply normal form theory to construct NHICs with a length independent of $\vep$, along which the local instability  can be obtained  as in the \emph{a priori} unstable case  \cite{Be2010Large,BKZ2016}.  To solve the problem of double resonance, the paper \cite{Ch2017}  presented a new variational mechanism to switch from one resonance to another, which eventually proved  the cusp-residual genericity of diffusion in the $C^r$ smooth category \cite{Ch2018}. Moreover, we  mention that similar results on diffusion have also been  obtained, by using variational methods, in the paper \cite{KZ2015} and the preprint \cite{KZ2013} for systems with 2.5 degrees of freedom.  Also, we refer the reader to the preprints \cite{Marco2016Arnold,Marco2016chains,GM2017} for systems with 3 degrees of freedom by using the geometric tools. As for the case
  of arbitrarily higher degrees of freedom, we refer the reader to the  preprint \cite{CX2015} and the announcement \cite{KZ2014Announ}. Anyway, there have been many other works related to the problem of Arnold diffusion but we cannot list all of them, see \cite{BCV2001,BT1999,DH2011,GKZ2016,GT2008,GR2007,KZ2014,ZC2014}, \emph{etc}.

To the author's knowledge, the genericity of Arnold diffusion is by now quite well understood in the $C^r$ smooth category, not yet in the analytic category, or the Gevrey smooth category \cite{Gev1918}. The present paper is interested in  whether the phenomenon of large evolution  exists generically in the Gevrey smooth Hamiltonians. Given  $\alpha\geq 1$, a Gevrey-$\alpha$  function is an ultra-differentiable function whose $k$-th order partial derivatives are bounded by  $O(M^{-|k|}k!^{\alpha})$. For the case $\alpha=1$,  it is exactly a real analytic function. Hence, the Gevrey class is intermediate between  the $C^\infty$ class and the real analytic class. Besides, a key point for the Gevrey class is that it allows the existence of a function with compact support (i.e. bump function). But no analytic function has compact support.

To consider the Arnold diffusion problem in the Gevrey topology, we would adopt the Gevrey norm  introduced by  Marco and   Sauzin  in \cite{MS2003} during a collaboration with  Herman (see Definition \ref{def of gev}). Apart from the theory of PDE where it has been widely used, the Gevrey class is also studied in the field of Dynamical Systems. For example, we refer to \cite{Bo2011,Bo2013,BF2017,BM2011,LDG2017,Po2004}, \emph{etc} for the stability theory, such as KAM theory and  Nekhoroshev theory.  We also refer the reader to \cite{BK2005,LMS2015,MS2004,Wa2015,FaSa2018}, \emph{etc} for some relevant results on instability. All these studies make us believe that one can also consider the genericity problem of diffusion in the Gevrey case.

Therefore, in this paper we start by considering the \emph{a priori} unstable, Gevrey-$\alpha$ ($\alpha>1$)  Hamiltonian systems of two and a half degrees of freedom.  The case  $\alpha=1$ (i.e. the analytic genericity) is more complicated and  has not been fully studied.  Here we only mention  a recent work \cite{GLS2019} which proposes a general geometric mechanism that might be useful for analytic genericity. 
In the same spirit as in \cite{GLS2019}, the paper \cite{GT2017}  gives  models where the analytic genericity can be achieved for \emph{a priori} chaotic symplectic maps, provided that  the scattering map has no monodromy and is globally defined on the NHIC.

Before stating our main results , we review the concept of Gevrey function and some standard facts.

\begin{Def}[\emph{Gevrey function} \cite{MS2003}]\label{def of gev}
Let $\alpha\geq 1, \bL>0$ and  $K$ be a $n$-dimensional compact domain. A real-valued $C^\infty$ function $f(x)$ defined on $K$ is said to be Gevrey-($\alpha,\bL$) if
$$ \| f\|_{\alpha,\bL}:=\sum_{k\in\N^n}\frac{\bL^{|k|\alpha}}{(k!)^\alpha}\|\partial^kf\|_{C^0(K)}<+\infty,$$
with the standard multi-index notation $k=(k_1,\cdots,k_n)\in\N^n$, $|k|=k_1+\cdots+k_n$, $k!=k_1!\cdots k_n!$ and
$\partial^k=\partial^{k_1}_{x_1}\cdots\partial^{k_n}_{x_n}$.
\end{Def}

Let $\bG^{\alpha,\bL}(K):=\{ f\in C^\infty(K)~:~\|f\|_{\alpha,\bL}<+\infty\}$. The space $\bG^{\alpha,\bL}(K)$ endowed with the norm $\|\cdot\|_{\alpha,\bL}$ is a Banach space. Sometimes we also write $\bG^{\alpha}(K):=\bigcup_{\bL>0}\bG^{\alpha,\bL}(K)$. In particular, for $K\subset \R^n$ and $\alpha=1$,  $\bG^{1}(K)$ is exactly the space of real analytic functions on $K$: any function $f\in \bG^{1,\bL}(K)$
is real analytic in $K$ and admits an analytic extension in the complex domain $\{z\in\C^n:\textup{dist}(z,K)< \bL\}$. Conversely, for any real analytic function $f$ in $K$, there exists $\bL>0$ such that $f\in \bG^{1,\bL}(K)$. However, for $\alpha>1$, $\bG^{\alpha,\bL}(K)$ admits non-analytic functions. Therefore, the Gevrey-smooth category  is intermediate between the $C^\infty$ category and the analytic category.

 Gevrey class has the following useful properties which have been already proved in \cite{MS2003}:
\begin{enumerate}[\rm(G1)]
\item\label{algebra norm}  The norm $\|\cdot\|_{\alpha,\bL}$  is an algebra norm, namely $\|fg\|_{\alpha,\bL}\leq\|f\|_{\alpha,\bL}\|g\|_{\alpha,\bL}$.
\item\label{derivative Gevrey}Suppose $0<\lambda<\bL$ and $f\in\bG^{\alpha,\bL}(K)$, then all partial derivatives of $f$ belong to $\bG^{\alpha,\bL-\lambda}(K)$ and
$\sum\limits_{k\in\N^n,|k|=l} \|\partial^kf\|_{\alpha,\bL-\lambda}\leq l!^\alpha\lambda^{-l\alpha}\|f\|_{\alpha,\bL}.$
\item\label{composition}Let $f\in\bG^{\alpha,\bL}(K_m)$  where $K_m$ is a $m$-dimensional domain and let $g=(g_1,\cdots,g_m)$ be a mapping whose component $g_i\in\bG^{\alpha,\bL_1}(K_n)$ . If $g(K_n)\subset K_m$ and
$\|g_i\|_{\alpha,\bL_1}-\|g_i\|_{C^0(K_n)}\leq\bL^\alpha/n^{\alpha-1}$ for all $1\leq i\leq m,$
then $f\circ g\in\bG^{\alpha,\bL_1}(K_n)$ and $\|f\circ g\|_{\alpha,\bL_1}\leq\|f\|_{\alpha,\bL}$.
\end{enumerate}

\subsection{Setup and main result}
The current paper will mainly focus on the convex Hamiltonians of two and a half degrees of freedom. As we will see later, all discussions will be restricted 
 on a compact domain in $\T^2\times\R^2\times\T$, so we fix, once and for all, a constant $R>1$ and a compact set
$$\sD_R=\T^2\times\bar{B}_R(0)\times\T,$$ 
where $B_R(0)\subset\R^2$ is an open ball of radius $R$ centered at 0 and $\bar{B}_R(0)$ is the closure.
By Definition \ref{def of gev}, the space $\bG^{\alpha,\bL}(\sD_R)$ consists of all real-valued smooth functions $f(q,p,t)$ satisfying
\begin{equation}\label{gevrey norm}
 \| f\|_{\alpha,\bL}=\sum_{k\in\N^{5}}\frac{\bL^{|k|\alpha}}{k!^\alpha}\|\partial^kf\|_{C^0(\sD_R)}<+\infty,
\end{equation}
Let $C^\omega_d(\sD_R)$ be the space of all real-valued analytic functions on $\sD_R$, admitting an analytic extension in the complex domain   $\{(q,p,t)\in(\C/\Z)^2\times\C^2\times(\C/\Z):\|\textup{Im}q\|_{\infty}< d,~\textup{dist}(p,\bar{B}_R(0)) < d, |\textup{Im}t|<d\}$.
Set $C^\omega(\sD_R)=\bigcup_{d>0} C^\omega_d(\sD_R)$, it is well known that
\begin{enumerate}[(i)]
  \item For $\alpha\geq 1$, $\bL>0$  and any $d>\bL^\alpha$, 
$$C^\omega_d(\sD_R)\subset \bG^{\alpha,\bL}(\sD_R)\subset C^\infty(\sD_R),\quad C^\omega(\sD_R)\subset \bG^{\alpha}(\sD_R)\subset C^\infty(\sD_R).$$
\item $C^\omega(\sD_R)=\bG^{1}(\sD_R).$
\end{enumerate}

Now, we introduce the \emph{a priori} unstable Hamiltonian model considered in this paper
 and state the main assumptions. Let $q=(q_1, q_2)\in\T^2$ and $p=(p_1,p_2)\in\R^2$. 
 We consider a time-periodic and $C^r  (r>2)$ smooth Hamiltonian of the form:
 \begin{equation}\label{hamiltonian}
\begin{aligned}
 H(q, p, t)=H_0(q, p)+H_1(q, p, t),\quad \text{where}\quad  H_0(q, p)=h_1(p_1)+h_2(q_2,p_2).
 \end{aligned}
\end{equation}
Here, the term $H_1$ is a small perturbation which is periodic of period 1 in $t$. Our main assumptions on $H_0$ are the following:
\begin{enumerate}[\bf(H1)]
         \item \textbf{Convexity and superlinearity}: for each $q\in\T^2$, the Hessian $\partial_{pp}H_0(q,p)$ is positive definite, and $\lim_{\|p\|\rightarrow +\infty} H_0(q,p)/\|p\|=+\infty.$
         \item \textbf{A priori hyperbolicity}: the Hamiltonian flow $\Phi^t_{h_2}$, determined by $h_2$, has a  hyperbolic fixed point  $(q_2,p_2)=(x^*,y^*)$.  Moreover,  the function $h_2(q_2,y^*): \T \rightarrow \R$ attains its unique maximum at $q_2=x^*$. Without loss of generality, we can assume $(x^*,y^*)=(0,0)$.
    \end{enumerate}

 A prototype example of such a system is the coupling of a rotator and a pendulum 
$$H=\frac{p_1^2}{2}+\frac{p_2^2}{2}+(\cos2\pi q_2-1)+H_1(q, p, t),$$ 
which has been considered many times in the literature. Keep this example in mind will help the reader better understand our result and method. As we will see later, the above assumptions {\bf(H1)--(H2)} are in the same spirit as in \cite{CY2004} while our main result and  approach have some differences.

Let $\fB^{\bL}_{\vep,R}=\{ H_1\in C^\infty(\sD_R) : \|H_1\|_{\alpha,\bL}<\vep\}$ $\subset\bG^{\alpha,\bL}(\sD_R)$ denote the open ball of radius $\vep$ centered at the origin with respect to the norm $\|\cdot\|_{\alpha,\bL}$.
\begin{The}\label{main theorem}
Let $\alpha>1, R>1$ and  assume that $H_0$ in \eqref{hamiltonian} is of class $C^r$ $( r>2)$, then there exists a positive constant $\bL_0=\bL_0(H_0,\alpha,R)$ such that, for each $\bL\in(0,\bL_0]$ and  a sequence of open balls $B_s(y_1),\cdots, B_s(y_k)\subset\R^2$,  of radius $s$ centered at $y_\ell\in[-R+1,R-1]\times\{\bfo\}\subset\R^2$, $\ell=1,\cdots, k$, we have:

 there exist a positive number $\vep_0=\vep_0(H_0,\alpha,R,s,\bL)$  and an open and dense subset $\fS^{\bL}_{\vep_0,R}\subset\fB^{\bL}_{\vep_0,R}$  such that for each perturbation $H_1\in\fS^{\bL}_{\vep_0,R}$, the system $H=H_0+H_1$ has a trajectory $(q(t),p(t))$  whose action variables $p(t)$ pass through the ball $B_s(y_\ell)$ at the time $t=t_\ell$, where $t_1<t_2<\cdots<t_k$.
\end{The}

\begin{Rem}
Just as J. N. Mather did in \cite{Ma2004,Ma2012},  the smoothness of the unperturbed Hamiltonian $H_0$ could differ from that of the perturbation term $H_1$.  Notice that $\bL_0$ can not be an arbitrary constant, the reason is that our approach needs to adopt the Gevrey approximation (see Theorem \ref{Gevrey approx}).
\end{Rem}

\begin{Rem}[Autonomous case]
Recall that Mather's cohomology equivalence is trivial for an autonomous system (cf. \cite{Be2002}). The problem is that, unlike  the time-periodic case, there is no canonical global transverse section of the flow in an autonomous system. In \cite{LC2010}, this difficulty was overcome  by taking local transverse sections, which could generalize Mather's cohomology equivalence. Thus we believe that the Gevrey genericity is still valid for the \emph{a priori} unstable autonomous Hamiltonians. However, in this paper,
 we only consider the non-autonomous case.
\end{Rem}

The perturbation technique used in the current paper can also prove the genericity in the sense of Ma\~n\'e, which means that the diffusion is still a typical phenomenon when $H_0$ is perturbed by potential functions. More precisely, let $\mathbf{B}^{\bL}_{\vep}$ $\subset\bG^{\alpha,\bL}(\T^2\times\T)$ denote the open ball of radius $\vep$ centered at the origin with respect to the norm $\|\cdot\|_{\alpha,\bL}$, we have
\begin{The}\label{main thm2}
Under the same assumptions as in Theorem \ref{main theorem},  there exists  $\bL_0=\bL_0(H_0,\alpha,R)>0$ such that, for each $\bL\in(0,\bL_0]$ and  a sequence of open balls $B_s(y_1),\cdots, B_s(y_k)\subset\R^2$,  of radius $s$ centered at $y_\ell\in[-R+1,R-1]\times\{\bfo\}\subset\R^2$, $\ell=1,\cdots, k$, we have: 

there exist a positive number $\vep_0=\vep_0(H_0,\alpha,R,s,\bL)$  and an open and dense subset $\mathbf{S}^{\bL}_{\vep_0}\subset\mathbf{B}^{\bL}_{\vep_0}$  such that for each potential perturbation $H_1\in\mathbf{S}^{\bL}_{\vep_0}$, the system $H=H_0+H_1$  has a trajectory $(q(t),p(t))$  whose action variables $p(t)$ pass through the ball $B_s(y_\ell)$ at the time $t=t_\ell$, where $t_1<t_2<\cdots<t_k$. 
\end{The}

\subsection{Outline of this paper}
This paper mainly adopts variational methods to construct diffusing orbits, so it requires us to transform into Lagrangian formalism. We still denote by $\T^2\times\R^2$ the tangent bundle $T\T^2$, and  endow  $\T^2\times\R^2$ with its usual coordinates $(q, v)$. The Lagrangian $L:\T^2\times\R^2\times\T \rightarrow \R$ associated to $H$ is defined as follows:
\begin{equation}\label{lagrangian}
  L(q,v,t):=\max_{p}\{\langle p, v\rangle-H(q,p,t)\}=L_0(q,v)+L_1(q,v,t),\quad L_0=l_1(v_1)+l_2(q_2,v_2).
\end{equation}

In our proofs, we will apply results in Mather theory, where the Lagrangian is required to  satisfy the Tonelli conditions (see Section \ref{sec_Preliminaries}): the fiberwise Hessian is positive definite, fiberwise superlinear, and the completeness of the Euler-Lagrange flow. 

In fact, without affecting our analysis, we can always reduce to the Tonelli case. For our Lagrangian $L=L_0+L_1$ in  \eqref{lagrangian}, it is clear that the unperturbed part $L_0$ is a Tonelli Lagrangian as a result of hypothesis \textbf{(H1)}. Now, we turn to the small perturbation term $L_1$. As we will see later, only the information on a compact region is needed in our proofs,  then it will not affect the study of Arnold diffusion if one modifies the perturbation function $L_1$ outside the compact set. For example, 
one can introduce a  new function $\widetilde{L}_1$ which has compact support, and is identically equal to $L_1$ on a compact set $\{\|v\|_q\leq K\}$. In terms of this modification, we then introduce a new Lagrangian  $\widetilde{L}:=L_0+\widetilde{L}_1$.
Observe that the modified Lagrangian $\widetilde{L}$  satisfies the Tonelli conditions since the perturbation term $\widetilde{L}_1$ is small enough and has compact support. Also, it is quite clear that
$\widetilde{L}$ and $L$ generate the same Euler-Lagrange flow  when restricted on the compact region $\{\|v\|_q\leq K\}$. 
 Such a modification is elementary, see for instance \cite{Ma2004}.

Therefore, in what follows, we can always assume, without loss of generality, that our  Lagrangian \eqref{lagrangian} satisfies the Tonelli conditions. 
Then, through the Legendre transformation 
\begin{equation}\label{Legendre_tran}
	\begin{aligned}
		\sL: T^*\T^2\times\T &\rightarrow T\T^2\times\T,\\
(q,p,t) &\mapsto(q,\frac{\partial H}{\partial p}(q,p,t),t),
	\end{aligned}
\end{equation}
 we can write
\begin{equation*}
  L(q,v,t)=\langle \pi_p\circ\sL^{-1}(q,v,t),v\rangle-H\circ\sL^{-1}(q,v,t),
\end{equation*}
where $\pi_p$ denotes the projection $(q,p,t)\mapsto p$. Then, the Hamilton's equations $\dot{q}=\frac{\partial H}{\partial p}$, $\dot{p}=-\frac{\partial H}{\partial q}$ is equivalent to the Euler-Lagrange equation
\begin{equation*}
 \frac{d}{dt}\bigg(\frac{\partial L}{\partial v}\bigg)-\frac{\partial L}{\partial q}=0.
\end{equation*}
Throughout this paper, we use $\phi^t_L$ to denote the Euler-Lagrange flow determined by $L$ and  $\Phi^t_H$ to denote the Hamiltonian flow determined by $H$. 

The Fenchel inequality and  hypothesis (\textbf{H2}) together give rise to
\begin{equation*}
  h_2(q_2,\bfo)+l_2(q_2,v_2)\geq 0,\quad  h_2(\bfo,\bfo)+l_2(\bfo,\bfo)=0,\quad  (q_2,v_2)\in T\T.
\end{equation*}
Since $q_2=\bfo$ (mod 1) is the unique maximum point of the function $h_2(\cdot,\bfo):\T\to\R$, one gets
\begin{equation}\label{weiyimin}
  l_2(\bfo,\bfo)=-h_2(\bfo,\bfo)\leq-h_2(q_2,\bfo)\leq l_2(q_2,v_2),\quad (q_2,v_2)\in T\T.
\end{equation}
Then the point  $(q_2,v_2)=(\bfo,\bfo)$ is the unique minimum point of the function $l_2$ as a consequence of the strict convexity.  Also, $(\bfo,\bfo)$ is a hyperbolic fixed point for the Euler-Lagrange flow $\phi^t_{l_2}$.

Compared with the variational proofs of $C^r$-genericity  in \cite{CY2004,CY2009}, the method in this paper contains some new techniques. Indeed, the strategy used in \cite{CY2004,CY2009}, which perturbs the generating functions to  create genericity, seems not applicable to the Gevrey genericity. The main difficulty arises from the fact that, when we estimate the Gevrey smoothness of a Hamiltonian flow, we cannot avoid the decrease of Gevrey coefficient $\bL$ during the switch from a generating function to its corresponding Hamiltonian, or the switch from a Lagrangian to its associated Hamiltonian (see property (G\ref{derivative Gevrey}) above). Thus in this paper, inspired by the ideas in \cite{Ch2017}, we decide to directly perturb a Hamiltonian by  potential functions, one advantage of this approach is that the Lagrangian associated to the perturbed Hamiltonian $H+V(q,t)$ is exactly $L-V(q,t)$. To this end,  some  quantitative  estimations are required, such as the Gevrey approximation and the corresponding inverse function theorem. It is also worth mentioning  that one can establish the genericity not only in the usual sense but also in the sense of Ma\~n\'e. Besides, we also believe that our results could  be obtained by geometric tools, such as the scattering maps developed in \cite{DLS2006,DH2009,DLS2008,DLS2016}, or  the separatrix maps in  \cite{Tr2004,Tr2012,DT2018}.

In our variational proof of genericity, the modulus of continuity of barrier functions is crucial. To implement this argument, the work \cite{CY2004} introduced the following parameterization technique: fixing an invariant curve $\Gamma_0$ on the NHIC,  for any other invariant curve $\Gamma_\sigma$ on the NHIC, we parameterize it by  $\sigma$, the area between the two curves. Then it can be shown that  $\Gamma_\sigma$ is H\"older continuous with respect to $\sigma$ in the $C^0$ topology. However, by taking advantage of the tools in weak KAM theory, now we can show that this ``area" parameter $\sigma$ is exactly the cohomology class (See section \ref{sub holder}). This will help us simplify the  proof.

The structure of this paper is as follows.  In Section \ref{Mathertheory}, we review  some standard results, related to Arnold diffusion problem, in Mather theory.  Section \ref{sec EWS} discusses the elementary weak KAM solutions, and a special ``barrier function" whose  minimal points correspond to heteroclinic orbits.  In Section \ref{sec local and global}, we introduce the concept of generalized transition chain and then give the variational mechanism of constructing diffusing orbits along this chain. In Section \ref{some properties Gev}, we present some properties of Gevrey functions which are necessary for our proofs.  Section \ref{sec proof main} is the main part of this paper, and applies the tools exposed in previous sections to study the Gevrey smooth systems. First, we generalize the genericity of uniquely minimal measure in the Gevrey or analytic topology. Second, we obtain certain regularity of the elementary weak KAM solutions and show how to choose suitable Gevrey space.  Finally, by proving  total disconnectedness for the minimal sets of barrier functions, we establish the genericity of generalized transition chain along which the global instability occurs, this therefore completes the proofs of Theorem \ref{main theorem} and Theorem \ref{main thm2}.

We would like to thank the anonymous referees for their insightful comments and valuable suggestions on improving our results.

\section{Preliminaries: Mather theory}\label{Mathertheory}\label{sec_Preliminaries}
In this section, we recall some standard results in Mather theory which are necessary for the purpose of our study, the main references are Mather's original papers \cite{Ma1991, Ma1993}. Let $M$ be a connected and compact smooth manifold without boundary, equipped with a smooth Riemannian metric $g$. Let $TM$ denote the tangent bundle, a point of $TM$ will be denoted by $(q,v)$ with $q\in M$ and $v\in T_qM$. We shall  denote by $\|\cdot\|_q$ the norm induced by $g$ on the fiber $T_qM$.  A time-periodic $C^2$ function $L=L(q, v, t):TM\times\T\rightarrow \R$  is called a \emph{Tonelli Lagrangian} if it satisfies:
\begin{enumerate}[\rm(1)]
  \item \emph{Convexity}: $L$ is strictly convex in each fiber, i.e., the second partial derivative $\partial^2 L/\partial v^2(q, v, t)$ is positive definite, as a quadratic form, for each $(q, t) \in M\times\T$;
  \item \emph{Superlinear growth}: $L$ is superlinear in each fiber, i.e. for each $(q,t)\in M\times\T$,
  $$\lim_{\|v\|_q\rightarrow +\infty}\frac{L(q,v,t)}{\|v\|_q}=+\infty.$$
  \item \emph{Completeness}: All solutions of the Euler-Lagrange equation are well defined for all $t\in\R$.
\end{enumerate}

Let $I=[a,b]$ be an interval and $\gamma:I\rightarrow M$ be any absolutely continuous curve. Given a cohomology class $c\in H^1(M,\R)$,  we choose and fix a closed 1-form $\eta_c$ with $[\eta_c]=c$. Denote by
$$A_c(\gamma):=\int_a^b L(d\gamma(t),t)-\eta_c(d\gamma(t))\, dt$$ 
the action of $L-\eta_c$ along $\gamma$,  where $d\gamma(t)=(\gamma(t),\dot\gamma(t))$. A curve $\gamma: I\rightarrow M$ is called  \emph{$c$-minimal} if
$$A_c(\gamma)=\min_{\substack{\xi(a)=\gamma(a),\xi(b)=\gamma(b)\\ \xi\in C^{ac}(I,M)}}\int_a^b L(d\xi(t),t)-\eta_c(d\xi(t))\, dt,$$
where $C^{ac}(I,M)$ denotes the set of  absolutely continuous curves.
As is known to all, each minimal curve satisfies the Euler-Lagrange equation.
A curve $\gamma:$ $\R$ $\rightarrow$ $M$ is called \emph{globally $c$-minimal} if for any $a<b$, the curve $\gamma:[a,b]\to M$ is  $c$-minimal. Therefore, we introduce the \emph{globally minimal set}
$$\widetilde{\cG}(c):=\bigcup_{\gamma}\{  (d\gamma(t),t)~:~ \gamma:\R\rightarrow M~ \text{is ~} c\text{-minimal} \}.$$

Let $\phi^t_L$ be the Euler-Lagrange flow on $TM\times\T$, and $\fM$ be the space of all $\phi^t_L$-invariant probability measures on $TM\times\T$. To each $\mu\in\fM$, Mather has proved that $\int_{TM\times\T} \lambda \,d\mu$=0 holds for any exact 1-form $\lambda$, which yields that $\int_{TM\times\T} L-\eta_c \,d\mu=\int_{TM\times\T} L-\eta^\prime_c \, d\mu$  if $\eta_c-\eta_c^\prime$ is exact. This leads us to define  \emph{Mather's $\alpha$ function},
$$\alpha(c):=-\inf_{\mu\in\fM}\int_{TM\times\T} L-\eta_c \,d\mu.$$
To some extent, the value $\alpha(c)$ is a minimal average action for $L-\eta_c$. Mather has proved that $\alpha:H^1(M,\R)\rightarrow\R$ is finite everywhere, convex and superlinear. 

For each $\mu\in\fM$, the \emph{rotation vector} $\rho(\mu)$ associated with $\mu$   is the unique element in $H_1(M,\R)$ that satisfies
$$\langle\rho(\mu),[\eta_c]\rangle=\int_{TM\times\T}\eta_c\, d\mu,\quad  \text{~ for all closed 1-form~} \eta_c,$$
here $\langle\cdot,\cdot\rangle$ denotes the dual pairing between homology and cohomology classes.
Then, we can define \emph{Mather's $\beta$ function} as follows:
$$\beta(h):=\inf_{\mu\in\fM, \rho(\mu)=h}\int_{TM\times\T} L\,d\mu.$$
This function $\beta:H_1(M,\R)\rightarrow\R$ is also finite everywhere, convex and superlinear. In fact, $\beta$ is  the Legendre-Fenchel dual of the function $\alpha$, i.e. $\beta(h)=\max_{c}\{\langle h,c\rangle-\alpha(c)\}$.

We define $$\fM^c(L):=\left\{ \mu: \int_{TM\times\T}L-\eta_c \,d\mu=-\alpha(c) \right\},~ \fM_h(L):=\left\{ \mu : \rho(\mu)=h,  \int_{TM\times\T}L \,d\mu=\beta(h) \right\}.$$
By duality,  it can be easily checked that $$\fM^c(L)=\bigcup_{h\in\partial \alpha(c)} \fM_h(L),$$
where $\partial \alpha(c)$ is the sub-differential. For one and a half degrees of freedom  systems including twist maps, Mather's $\alpha$ function is of class $C^1$, then 
\begin{equation}\label{ssff}
	\fM^c(L)= \fM_h(L),\quad d\alpha(c)=h.
\end{equation}

We call each element $\mu\in\fM^c(L)$  a \emph{$c$-minimal measure}. The  \emph{Mather set} of
cohomology class $c$ is then defined by
$$\widetilde{\cM}(c):=\overline{\bigcup_{\mu\in\fM^c(L)} \text{supp}\mu}.$$

To study more dynamical properties, we need to find some ``larger" minimal invariant sets and discuss their topological structures. Let $t^\prime> t$,  the action function $h^{t,t^\prime}_c:M\times M\rightarrow \R$  is defined by
$$h^{t,t^\prime}_c(x,x^\prime):=\min_{\substack{\gamma(t)=x, \gamma(t^\prime)=x^\prime\\ \gamma\in C^{ac}([t,t'],M)}}\int_t^{t^\prime} (L-\eta_c)(d\gamma(s),s)\,ds+\alpha(c)\cdot(t^\prime-t).$$
Then we define a real-valued function  $\Phi_c:(M\times\T)\times(M\times\T)\rightarrow \R$  by
$$\Phi_c((x,\tau),(x^\prime, \tau^\prime)):=\inf_{\substack{t^\prime>t,~t\equiv\tau  \text{mod}~1 \\  t^\prime\equiv \tau^\prime \text{mod}~1}}h^{t,t^\prime}_c(x,x^\prime).$$
and a real-valued function $h_c^\infty:(M\times\T)\times(M\times\T)\rightarrow \R $ by
\begin{equation}
 h_c^\infty((x,\tau),(x^\prime, \tau^\prime))=\liminf\limits_{\substack{t\equiv\tau  \text{mod}~1\\ t^\prime\equiv \tau^\prime \text{mod}~1,t^\prime-t\rightarrow+\infty}}h^{t,t^\prime}_c(x,x^\prime)
\end{equation}
In the literature,   $h_c^\infty$ and $\Phi_c$ are called the \emph{Peierls barrier function} and \emph{Ma\~n\'e's potential} respectively.

A minimal curve $\gamma:\R\rightarrow M$ is called $c$-semi static if for any $t<t'$,
\begin{equation}\label{semi static}
A_c(\gamma|_{[t,t^\prime]})+\alpha(c)\cdot(t^\prime-t)=\Phi_c\big(~(\gamma(t),t\text{~mod~} 1),~(\gamma(t^\prime),t^\prime\text{~mod~} 1)~\big).
\end{equation}
A minimal curve $\gamma:\R\rightarrow M$ is called $c$-static if for any $t<t'$,
\begin{equation}\label{static}
A_c(\gamma|_{[t,t^\prime]})+\alpha(c)\cdot(t^\prime-t)=-\Phi_c\big(~(\gamma(t^\prime),t^\prime\text{~mod~} 1),~(\gamma(t),t\text{~mod~} 1)~\big).
\end{equation}
This gives the so-called \emph{Aubry set} $\widetilde{\cA}(c)$ and \emph{Ma\~{n}\'{e} set} $\widetilde{\cN}(c)$ in $TM\times\T$:
\begin{equation*}
    \begin{split}
        \widetilde{\cA}(c)=\bigcup \big\{(d\gamma(t),t\text{~mod~}1):~\gamma ~\text{is}~ c\text{-static}\big\}, ~~
        \widetilde{\cN}(c)=\bigcup \big\{(d\gamma(t),t\text{~mod~}1):~\gamma ~\text{is}~ c\text{-semi static}\big\}.
    \end{split}
\end{equation*}
The $\alpha$-limit and $\omega$-limit sets of a $c$-minimal curve $(d\gamma(t),t)$ belong to $\widetilde{\cA}(c)$, see for instance \cite{Be2002}.  In addition, with the canonical projection  $\pi:TM\times\T\rightarrow M\times\T$, one could define the \emph{projected Aubry set}  $\cA(c)=\pi\widetilde{\cA}(c)$, the \emph{projected Mather set} $\cM(c)=\pi\widetilde{\cM}(c)$, the \emph{ projected Ma\~n\'e set} $\cN(c)=\pi\widetilde{\cN}(c)$ and the \emph{projected globally minimal set} $\cG(c)=\pi\widetilde{\cG}(c)$.   Then the following inclusion relations hold (see  \cite{Be2002}):
\begin{equation*}
  \widetilde{\cM}(c)\subset\widetilde{\cA}(c)\subset\widetilde{\cN}(c)\subset\widetilde{\cG}(c),\quad \cM(c)\subset\cA(c)\subset \cN(c)\subset\cG(c).
\end{equation*}

Next, we present some key properties of the  minimal  sets above, which will be fully exploited in the construction of diffusing orbits. Property (1) below is a classical result which has been proved by J. N. Mather in \cite{Ma1991}, and the proof of property (2) could be found in \cite{Be2002,CY2004}.
\begin{Pro}\label{upper semi}
For the Tonelli Lagrangian $L$, we have:
\begin{enumerate}[\rm(1)]
  \item \textup{(Graph property)} Let  $\pi:TM\times\T\rightarrow M\times\T$ be the canonical projection. Then 
  the restriction  of $\pi$ to $\widetilde{\mathcal{A}}(c)$ is a bi-Lipschitz homeomorphism.
  \item \textup{(Upper semi-continuity)} The set-valued map $(c,L)\mapsto\widetilde{\cG}(c,L)$ and the set-valued map $(c,L)\mapsto\widetilde{\cN}(c,L)$ are both upper semi-continuous.
\end{enumerate}
\end{Pro}

For $(x,\tau)$, $(x^\prime, \tau^\prime)$ $\in M\times\T$,
we set  $$d_c((x,\tau),(x^\prime, \tau^\prime)):=h_c^\infty((x,\tau),(x^\prime, \tau^\prime))+h_c^\infty((x^\prime, \tau^\prime),(x,\tau)).$$ By definition \eqref{static}, it follows that
$$h^\infty_c((x,\tau),(x, \tau))=0\Longleftrightarrow(x,\tau)\in\cA(c),$$ and hence $d_c$ is a pseudo-metric on the projected Aubry set $\cA(c)$. Two points $(x,\tau), (x^\prime, \tau^\prime)\in\cA(c)$ are said to be in the same \emph{Aubry class} if $d_c((x,\tau),(x^\prime, \tau^\prime))=0$. Clearly, each Aubry class is a closed set. If only one $c$-minimal measure exists, then the Aubry class is unique and $$\widetilde{\cA}(c)=\widetilde{\cN}(c).$$

To characterize the Ma\~{n}\'{e} set from another point of view, we define the following function
\begin{equation*}
B_c^*(x,\tau):=\min\limits_{\substack{(x_\ell,\tau_\ell)\in\cA(c)\\\ell=1,2}}\{h_c^\infty((x_1,\tau_1),(x, \tau))+h_c^\infty((x,\tau),(x_2, \tau_2))-h_c^\infty((x_1,\tau_1),(x_2, \tau_2)) \}.
\end{equation*}
Mather has proved in \cite{Ma2004} that $\min  B_c^*=0$, and  the set of  all minimal points is exactly $\cN(c)$, i.e.
\begin{equation}\label{manebarrier}
B_c^*(x,\tau)=0 \Longleftrightarrow (x,\tau)\in\cN(c).
\end{equation}

To prove Theorem \ref{generic G1} in Section \ref{sec proof main}, 
 it is convenient to adopt the equivalent definition of minimal measures originating from  Ma{\~{n}}{\'{e}} \cite{Mane1996}. In his setting, the minimal measures are obtained through a variational principle not requiring the invariance a priori. Let $\rm C$ be the set of all continuous functions $f:TM\times\T\to\R$ having linear growth at most, i.e.
$$\|f\|_l:=\sup_{(v,t)}\frac{|f(q,v,t)|}{1+\|v\|_q}<+\infty,$$
and endow $\rm C$ with the norm $\|\cdot\|_l$. Let $\rm C^*$ be the vector space of all continuous linear functionals $\nu: \rm C\to\R$ provided with the weak-$*$ topology, namely,
$$\lim\limits_{k\to+\infty}\nu_k=\nu \Longleftrightarrow \lim\limits_{k\to+\infty}\int_{TM\times\T}f\, d\nu_k=\int_{TM\times\T}f \,d\nu,\quad \forall f\in \rm C.$$

For each $N\in\Z^+$ and each  $N$-periodic absolutely continuous curve $\gamma:\R\rightarrow M$, one can define a probability measure $\mu_\gamma$ associated to $\gamma$ as follows:
\begin{equation}\label{holonomic}
\int_{TM\times\T}f \,d \mu_\gamma :=\frac{1}{N}\int_0^N f(d\gamma(t),t)\,dt,\quad \forall f\in \rm C.
\end{equation}
  Let
$$\Gamma:=\bigcup_{N\in\Z^+}\big\{~\mu_\gamma~:~\gamma\in C^{ac}(\R,M) \textup{~is~} N \textup{-periodic} ~\big\}\subset \rm C^*.$$
and let  $\mathcal{H}$  be the closure of $\Gamma$ in $\rm C^*$. It is easily seen that the set $\mathcal{H}$ is  convex.

$\mu_\gamma$ in $\Gamma$ has a naturally associated homology class $\rho(\mu_\gamma)=\frac{1}{N}[\gamma]\in H_1(M,\R),$ where $[\gamma]$ denotes the homology class of $\gamma$. The map $\rho:\Gamma\rightarrow H_1(M,\R)$ can extend continuously to $\rho:\mathcal{H}\rightarrow H_1(M,\R)$ which
is surjective.  Then, Ma\~n\'e introduced the  following minimal measures:
\begin{equation}\label{definition of mane}
\begin{split}
 \mathfrak{H}^c(L)&:=\bigg\{\mu\in\mathcal{H} ~:~ \int L-\eta_c \,d\mu=\min\limits_{\nu\in\mathcal{H}}\int L-\eta_c\, d\nu \bigg\}, \\
 \mathfrak{H}_h(L)&:=\bigg\{\mu\in\mathcal{H} ~:~\rho(\mu)=h, \int L\,d\mu=\min\limits_{\nu\in\mathcal{H},\rho(\nu)=h}\int L\,d\nu \bigg\}.
\end{split}
\end{equation}

We end this section by the following equivalence property:
\begin{Pro}\rm{(\cite{Mane1996})}\label{Mather and Mane}
The sets $\fM^c(L)=\mathfrak{H}^c(L)$ and $\fM_h(L)=\mathfrak{H}_h(L)$.
\end{Pro}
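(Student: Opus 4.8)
The final statement to be proved is Proposition~\ref{Mather and Mane}, the equivalence $\fM^c=\mathfrak{H}^c$ and $\fM_h=\mathfrak{H}_h$ between Mather's and Ma\~n\'e's notions of minimal measure. This is a classical result of Ma\~n\'e, so my plan is to reconstruct the standard argument rather than invent a new one.

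The plan is to prove the equality of the two minimization problems in two steps: first identify the underlying spaces of measures, and then observe that the functionals being minimized agree. For the first step, I would recall that $\mathcal{H}$, the closure in $\mathrm{C}^*$ of the set $\Gamma$ of ``holonomic'' probability measures $\mu_\gamma$ coming from closed curves, coincides with the set $\fM_L$ of $\phi^t_L$-invariant Borel probability measures on $TM\times\T$. The inclusion $\fM_L\subset\mathcal{H}$ is the nontrivial direction: given an invariant measure $\mu$, one uses the ergodic decomposition to reduce to an ergodic $\mu$, then invokes Birkhoff's ergodic theorem to find a $\mu$-generic orbit whose long pieces, closed up over a large time, produce curves $\gamma$ with $\mu_\gamma\to\mu$ weakly-$*$ --- here one must check that closing up a long orbit segment costs negligible action and negligible mass in the weak-$*$ sense, which is where the linear-growth condition defining the space $\mathrm{C}$ is used (the closing lemma must be compatible with test functions of linear growth). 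The reverse inclusion $\mathcal{H}\subset\fM_L$ follows because each $\mu_\gamma$ is ``closed'' (it annihilates all exact one-forms $df$, since $\int_0^N \frac{d}{dt}f(\gamma(t),t)\,dt$ telescopes to zero for $N$-periodic $\gamma$ — modulo the $\partial_t$ term, which vanishes by $1$-periodicity in $t$), this property passes to the weak-$*$ closure, and a standard lemma (sometimes attributed to Ma\~n\'e) shows that every closed measure with finite action is in fact flow-invariant, or at least that minimizers among closed measures are supported on the Euler--Lagrange flow and hence invariant.

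Once the spaces are identified, the second step is immediate: the functional $\mu\mapsto\int (L-\eta_c)\,d\mu$ is the same in both formulations, so
\[
-\alpha(c)=\inf_{\mu\in\fM_L}\int (L-\eta_c)\,d\mu=\min_{\nu\in\mathcal{H}}\int(L-\eta_c)\,d\nu,
\]
and the sets of minimizers coincide, giving $\fM^c=\mathfrak{H}^c$. For the rotation-vector version, I would check that the homology class $\rho$ extends continuously from $\Gamma$ (where $\rho(\mu_\gamma)=\tfrac1N[\gamma]$) to all of $\mathcal{H}$, and that this extension agrees with Mather's rotation vector defined via pairing against closed one-forms; this is again a weak-$*$ continuity argument using that closed one-forms are legitimate (linear-growth) test functions. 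Then the constrained minimization defining $\beta(h)$ matches the one defining $\mathfrak{H}_h$, yielding $\fM_h=\mathfrak{H}_h$.

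The main obstacle is the inclusion $\fM_L\subseteq\mathcal{H}$, i.e. approximating an arbitrary invariant measure by holonomic measures of closed curves in the weak-$*$ topology of $\mathrm{C}^*$. The subtlety is purely the one Ma\~n\'e addressed: one works with test functions of at most linear growth in $v$, so the approximation and the closing-up construction must keep the velocity of the closing arcs controlled and the action of those arcs uniformly bounded, so that the ``error measures'' introduced by closing up go to zero against every $f\in\mathrm{C}$. Since this is exactly Ma\~n\'e's theorem \cite{Mane1996}, I would cite it for the fine closing estimates and present only the structural skeleton above; everything else (equality of functionals, continuity of $\rho$) is routine.
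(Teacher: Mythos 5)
The paper offers no proof of this proposition at all --- it is quoted directly from Ma\~n\'e \cite{Mane1996} --- so there is no internal argument to compare yours against; I can only judge your reconstruction on its own terms. It contains one genuine error: the identification $\mathcal{H}=\fM_L$ is false. The inclusion $\fM_L\subset\mathcal{H}$ is correct (and your ergodic-decomposition/Birkhoff closing argument, with the linear-growth control on test functions, is the right way to obtain it), but the reverse inclusion fails: $\mu_\gamma$ for an arbitrary $N$-periodic absolutely continuous curve $\gamma$ is a closed (holonomic) measure, yet it is not invariant under the Euler--Lagrange flow unless $\gamma$ happens to solve the Euler--Lagrange equation, and closedness does not upgrade to invariance in the weak-$*$ limit either. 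The ``standard lemma'' you invoke in the form ``every closed measure with finite action is flow-invariant'' is not a theorem; what Ma\~n\'e actually proves is only your parenthetical fallback, namely that \emph{minimizers} of $\int (L-\eta_c)\,d\nu$ over $\mathcal{H}$ are supported on Euler--Lagrange orbits and hence invariant.

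This matters for the logic of your ``second step''. Since $\mathcal{H}$ strictly contains $\fM_L$, it is not immediate that the two minimization problems have the same value: a priori one could have $\min_{\nu\in\mathcal{H}}\int(L-\eta_c)\,d\nu<-\alpha(c)$. The correct assembly is: $\fM_L\subset\mathcal{H}$ gives $\min_{\mathcal{H}}\le-\alpha(c)$; the invariance of $\mathcal{H}$-minimizers gives $\min_{\mathcal{H}}\ge-\alpha(c)$ and places every element of $\mathfrak{H}^c$ inside $\fM^c$; conversely any $\mu\in\fM^c$ lies in $\mathcal{H}$ and attains the common minimum, whence $\fM^c=\mathfrak{H}^c$. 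The same repair applies to the rotation-vector statement (your remark on extending $\rho$ continuously via closed one-forms as linear-growth test functions is fine, with the constraint set $\{\rho=h\}$ handled identically). So your proof is recoverable, but only after discarding the false equality $\mathcal{H}=\fM_L$ and promoting ``minimizers over $\mathcal{H}$ are invariant'' from a parenthetical alternative to the central lemma that it actually is.
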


\section{Elementary weak KAM solutions and heteroclinic orbits}\label{sec EWS}
\subsection{Weak KAM solutions}\label{sub weakkam}
Weak KAM solution is the basic element in weak KAM theory which builds a link between Mather theory and the theory of viscosity solutions of Hamilton-Jacobi equations. Here we only recall some basic concepts and properties which  help us  better understand Mather theory. For more details, we refer the reader to  Fathi's book \cite{Fa2008} for time-independent systems, and to \cite{Be2008,CIS2013,WY2012} for time-periodic systems.
\begin{Def}\label{weakKAM}A continuous function $u_c^-:M\times\T\rightarrow\R$ is called a \emph{backward weak KAM solution} if
\begin{enumerate}[\rm(1)]
  \item For any absolutely continuous curve $\gamma:[a,b]\to M$,
  $$u_c^-(\gamma(b),b)-u_c^-(\gamma(a),a)\leq \int_{a}^{b} (L-\eta_c)(d\gamma(s),s)+\alpha(c)\,ds.$$
  \item For each $(x,t)\in M\times\R$, there exists a \emph{backward calibrated curve} $\gamma^-:(-\infty,t]\rightarrow M$ with $\gamma^-(t)=x$ such that for all $a<b\leq t$,
  $$u_c^-(\gamma^-(b),b)-u_c^-(\gamma^-(a),a)=\int_{a}^{b}(L-\eta_c)(d\gamma^-(s),s)+\alpha(c)\,ds.$$
\end{enumerate}
Similarly, a continuous function $u_c^+:M\times\T\rightarrow\R$ is called a \emph{forward weak KAM solution} if
\begin{enumerate}[\rm(1)]
  \item For any absolutely continuous curve $\gamma:[a,b]\to M$,
  $$u_c^+(\gamma(b),b)-u_c^+(\gamma(a),a)\leq \int_{a}^{b} (L-\eta_c)(d\gamma(s),s)+\alpha(c)\,ds.$$
  \item For each $(x,t)\in M\times\R$, there exists a \emph{forward calibrated curve} $\gamma^+:[t,+\infty)\rightarrow M$ with $\gamma^+(t)=x$ such that for all $t\leq a<b$,
  $$u_c^+(\gamma^+(b),b)-u_c^+(\gamma^+(a),a)=\int_{a}^{b}(L-\eta_c)(d\gamma^+(s),s)+\alpha(c)\,ds.$$
\end{enumerate}
\end{Def}

For example, it is well known in weak KAM theory that, for each $(x_0,t_0)\in M\times\T$ the barrier function $ h_c^\infty((x_0,t_0),\cdot): M\times\T\to\R$ is a backward weak KAM solution and  $-h_c^\infty(\cdot,(x_0, t_0)):M\times\T\to\R$ is a forward weak KAM solution. If there is only one Aubry class, then $ h_c^\infty((x_0,t_0),\cdot)$  is the unique backward  weak KAM solution up to an additive constant,   and $-h_c^\infty(\cdot,(x_0,t_0))$ is also the unique forward  weak KAM solution up to an additive constant.

It is easily seen that backward (forward) calibrated curves are semi static. The following properties for weak KAM solutions are well known and the proof can be found in \cite{Fa2008} or \cite{CIS2013}:
\begin{Pro}\label{properties weak KAM}
\textup{(1)}  $u^-_c$ is Lipschitz continuous, and  is differentiable on $\cA(c)$. If $u^-_c$ is differentiable at $(x_0,t_0)\in M\times\T$, then
  \begin{equation*}
     \partial_tu^-_c(x_0,t_0)+H(x_0,c+\partial_xu^-_c(x_0,t_0),t_0)=\alpha(c).
  \end{equation*}
It also determines a unique $c$-semi static curve $\gamma^-_c:(-\infty,t_0]\rightarrow M$ with $\gamma^-_c(t_0)=x_0$, and such that $u_c^-$ is differentiable at each point $(\gamma^-_c(t),t)$  with $t\leq t_0$, namely $c+\partial_xu^-_c(\gamma^-_c(t),t)=\frac{\partial L}{\partial v}(d\gamma^-_c(t),t)$.\\
\textup{(2)}  $u^+_c$ is Lipschitz continuous, and is differentiable on $\cA(c)$. If $u^+_c$ is differentiable at $(x_0,t_0)\in M\times\T$, then
  \begin{equation*}
     \partial_tu^+_c(x_0,t_0)+H(x_0,c+\partial_xu^+_c(x_0,t_0),t_0)=\alpha(c).
  \end{equation*}
It also determines a unique $c$-semi static curve $\gamma^+_c:[t_0,+\infty)\rightarrow M$ with $\gamma^+_c(t_0)=x_0$, and such that $u_c^+$ is differentiable at each point $(\gamma^+_c(t),t)$ with $t\geq t_0$, namely  $c+\partial_xu^+_c(\gamma^+_c(t),t)=\frac{\partial L}{\partial v}(d\gamma^+_c(t),t)$.
\end{Pro}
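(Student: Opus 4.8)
These are the standard structural properties of weak KAM solutions, so the plan is to recover all of them from just two ingredients: the domination inequality and calibration (items (1) and (2) of Definition~\ref{weakKAM}), together with the a priori compactness of the Euler--Lagrange flow of a Tonelli Lagrangian --- after the reduction in Section~\ref{Mathertheory} the Lagrangian \eqref{lagrangian} may be assumed Tonelli, so all minimizers and calibrated curves have uniformly bounded speed over $M\times\T$. This is essentially Fathi's argument \cite{Fa2008} in the autonomous case, together with its time-periodic adaptation \cite{CIS2013,WY2012}; I only indicate its shape. Throughout, fix $\eta_c$ to be the constant $1$-form $c$ on $\T^n$, so that $\eta_c(d\gamma)=\langle c,\dot\gamma\rangle$.

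\textbf{Lipschitz regularity.} From domination, $u^-_c(x',t')-u^-_c(x,t)\le h^{t,t'}_c(x,x')$ for $t<t'$. For the control at a fixed time, take a backward calibrated curve $\gamma^-$ ending at $(x,t)$ and, for $x'$ close to $x$, compare it with the curve that coincides with $\gamma^-$ up to time $t-1$ and then interpolates to $x'$ on $[t-1,t]$; domination applied to the modified curve and calibration applied to $\gamma^-$ give $u^-_c(x',t)-u^-_c(x,t)\le O(\mathrm{d}(x,x'))$, the constant coming from the uniform speed bound for calibrated curves and the $C^1$-bounds on $L$ on the relevant compact velocity set. Symmetrizing, and doing the same for the time variable, produces a global Lipschitz constant for $u^-_c$ on $M\times\T$, so $u^-_c$ is differentiable Lebesgue-almost everywhere; differentiability \emph{on $\cA(c)$} is the substantive claim.

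\textbf{Hamilton--Jacobi equation, Legendre relation, uniqueness of the calibrated curve.} Suppose $u^-_c$ is differentiable at $(x_0,t_0)$. Testing the domination inequality on short curves leaving $(x_0,t_0)$ with an arbitrary initial velocity $v$, dividing by the time length and letting it tend to $0$, gives $\partial_tu^-_c(x_0,t_0)+\langle c+\partial_xu^-_c(x_0,t_0),v\rangle-L(x_0,v,t_0)\le\alpha(c)$ for every $v$; taking the supremum over $v$ yields ``$\le$'' in the stated equation. Conversely, differentiating $s\mapsto u^-_c(\gamma^-(s),s)$ at $s=t_0$ along a backward calibrated curve $\gamma^-$ ending at $(x_0,t_0)$ gives equality for $v=\dot\gamma^-(t_0)$, hence the Hamilton--Jacobi equation and the fact that the supremum defining $H$ is attained at $\dot\gamma^-(t_0)$. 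By strict convexity of $L$ in $v$ this forces $c+\partial_xu^-_c(x_0,t_0)=\frac{\partial L}{\partial v}(x_0,\dot\gamma^-(t_0),t_0)$, so $\dot\gamma^-(t_0)$ is determined; the calibrated curve is then the Euler--Lagrange solution with this initial datum, hence unique, and it is $c$-semistatic since, as recalled in the text, calibrated curves always are.

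\textbf{Differentiability along the curve and on $\cA(c)$.} For $t<t_0$, the two segments $\gamma^-|_{(-\infty,t]}$ and $\gamma^-|_{[t,t_0]}$ both realize calibration, and domination forces the one-sided derivatives of $u^-_c$ they produce at $(\gamma^-(t),t)$ to coincide, so $u^-_c$ is differentiable there, with the same Legendre relation; the endpoint $t=t_0$ is the standing hypothesis. Finally, a point $(x_0,t_0)\in\cA(c)$ lies on a $c$-static curve, which is simultaneously backward and forward calibrated; applying the previous step to its backward half (and its time-reversed analogue to the forward half) shows $u^-_c$ --- and likewise $u^+_c$ --- is differentiable at $(x_0,t_0)$, establishing differentiability on $\cA(c)$. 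Part (2) follows from (1) by reversing time. The only delicate points are quantifying the a priori compactness to get a uniform Lipschitz constant and the pinching of one-sided derivatives along a calibrated curve; neither is an essential obstacle, and since this proposition is used only as a black box in what follows, a reference to \cite{Fa2008,CIS2013} suffices.
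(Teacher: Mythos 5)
The paper gives no proof of this proposition, deferring to \cite{Fa2008} and \cite{CIS2013}, and your sketch is exactly the standard argument from those references: Lipschitz bounds by comparing a calibrated curve with a nearby interpolating curve via domination, the Hamilton--Jacobi equation and the Legendre relation by testing domination on short curves and differentiating $s\mapsto u^-_c(\gamma^-(s),s)$ along a calibrated curve (with strict convexity pinning down $\dot\gamma^-(t_0)$ and hence uniqueness), and differentiability at interior points of calibrated curves, which covers $\cA(c)$ since its points lie on bi-infinite (static) calibrated curves. Your proposal is correct and takes the same route as the cited proof.
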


\subsection{Elementary weak KAM solutions}
It is a generic property that a Lagrangian  has  finitely many Aubry classes \cite{BC2008} for each cohomology class. Recall that the weak KAM solution is unique (up to an additive constant) if the Aubry class is unique. If two or more Aubry classes exist, there are infinitely many weak KAM solutions, among which we are only interested in the elementary weak KAM solutions.  In what follows, we assume that for certain cohomology class $c$ the  Aubry classes are $\{\cA_{c,i}: i=1,2,\cdots,k\}$, and hence the projected Aubry set 
$\cA(c)=\bigcup_i\cA_{c,i}.$ The concept of elementary weak KAM solution  appeared in  the  work  \cite{Be2008}. However, for the purpose of our applications,  we decide to adopt an 
analogous concept defined in \cite{Ch2012}.
\begin{Def}\label{def of EWS}
We fix an $i\in\{1,\cdots,k\}$ and perturb the Lagrangian $L\rightarrow L+\vep V(x,t)$ where $\vep>0$ and $V$ is a  non-negative $C^\infty$ function satisfying $\textup{supp}V\cap\cA_{c,i}=\emptyset$ and $V\big|_{\cA_{c,j}}>0$ for each $j\neq i$. Then for the cohomology class $c$, the perturbed Lagrangian has only one Aubry class $\cA_{c,i}$, and its backward weak KAM solution, denoted by $u^-_{c,i,\vep}$, is unique up to an additive constant. If for a subsequence $\{u^-_{c,i,\vep_k}\}$, the limit \begin{equation}\label{def ofews}
u^-_{c,i}:=\lim\limits_{\vep_k\rightarrow 0^+}u^-_{c,i,\vep_k},
\end{equation}
exists,  then we call  $u^-_{c,i}$ a \emph{backward elementary weak KAM solution}. Analogously, one can define a \emph{forward elementary weak KAM solution} $u^+_{c,i}$.
\end{Def}

In the following theorem, we will prove the existence of elementary  weak KAM solutions and give explicit representation formulas as well. 

\begin{The}\label{representation of EWS}
For each $i$, the backward (resp. forward) elementary weak KAM solution $u^-_{c,i}$ (resp. $u^+_{c,i}$) always exists and is unique up to an additive constant. More precisely, let $(x_i, \tau_i)$ be any point in $\cA_{c,i}$, then there exists a constant $C$ (resp. $C'$) depending on $(x_i, \tau_i)$, such that
$$u^-_{c,i}(x,\tau)=h^\infty_c((x_i,\tau_i),(x,\tau))+C \qquad (resp. ~ u^+_{c,i}(x,\tau)=-h^\infty_c((x,\tau),(x_i,\tau_i))+C'.)$$
\end{The}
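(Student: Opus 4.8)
The plan is to analyze the backward case; the forward case is symmetric. Fix $i$ and a point $(x_i,\tau_i)\in\cA_{c,i}$. I would first recall that $h^\infty_c((x_i,\tau_i),\cdot)$ is always a backward weak KAM solution (stated in the excerpt), so the candidate formula does define a genuine weak KAM solution; the content of the theorem is that the limit in Definition \ref{def of EWS} exists and equals this candidate up to a constant. The strategy is to sandwich the perturbed solutions $u^-_{c,i,\varepsilon}$ between expressions that converge to $h^\infty_c((x_i,\tau_i),\cdot)$ as $\varepsilon\to 0^+$.

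First I would normalize each $u^-_{c,i,\varepsilon}$ by an additive constant so that $u^-_{c,i,\varepsilon}(x_i,\tau_i)=0$; since the perturbed Lagrangian $L+\varepsilon V$ has the single Aubry class $\cA_{c,i}$, its backward weak KAM solution is unique up to a constant, so this normalization is unambiguous, and by the remark after Definition \ref{weakKAM} it must equal $h^{\infty}_{c,\varepsilon}((x_i,\tau_i),\cdot)$, the Peierls barrier of $L+\varepsilon V$ for the class $c$ (note $\alpha(c)$ is unchanged because $\varepsilon V\ge 0$ vanishes on $\cA_{c,i}$, so the minimal measure supported there still realizes $-\alpha(c)$). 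Now the key monotonicity: since $V\ge 0$, adding $\varepsilon V$ only increases action integrals, hence $h^{\infty}_{c,\varepsilon}((x_i,\tau_i),(x,\tau))\ge h^{\infty}_{c}((x_i,\tau_i),(x,\tau))$, and moreover $\varepsilon\mapsto h^{\infty}_{c,\varepsilon}((x_i,\tau_i),(x,\tau))$ is nondecreasing in $\varepsilon$. Therefore the pointwise limit as $\varepsilon\to 0^+$ exists (it is an infimum over $\varepsilon>0$, possibly a priori $+\infty$, but finiteness follows from an upper bound obtained next) and is $\ge h^\infty_c((x_i,\tau_i),(x,\tau))$.

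For the reverse inequality I would produce, for each fixed $(x,\tau)$, near-calibrated curves for the \emph{unperturbed} $L$ that spend almost all their time inside $\cA_{c,i}$ (where $V\equiv 0$) and thus incur essentially no extra $\varepsilon V$-cost. Concretely: a minimizing sequence for $h^\infty_c((x_i,\tau_i),(x,\tau))$ consists of $c$-minimal segments of increasing time length $T_j$ starting near $(x_i,\tau_i)$; since $\alpha$- and $\omega$-limits of $c$-minimizers lie in $\tilde\cA(c)=\tilde\cA_{c,i}$ for this $c$, such a segment stays in an arbitrarily small neighborhood of $\cA_{c,i}$ except on a fixed-length initial and terminal portion. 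Choosing the neighborhood of $\mathrm{supp}\,V$ disjoint from $\cA_{c,i}$, the quantity $\varepsilon\int V\,dt$ along the segment is bounded by $\varepsilon$ times a constant independent of $T_j$ (the time spent near $\mathrm{supp}\,V$ is uniformly bounded by an Aubry-set shadowing/graph-property argument as in \cite{CY2004,Be2002}). Hence $h^{\infty}_{c,\varepsilon}((x_i,\tau_i),(x,\tau))\le h^{\infty}_{c}((x_i,\tau_i),(x,\tau))+O(\varepsilon)$, which both gives finiteness and, combined with the previous paragraph, yields $u^-_{c,i}(x,\tau):=\lim_{\varepsilon\to0^+}u^-_{c,i,\varepsilon}(x,\tau)=h^\infty_c((x_i,\tau_i),(x,\tau))$. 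The limit is automatically a weak KAM solution (uniform limits of equi-Lipschitz functions satisfying the domination and calibration inequalities), and independence of the base point $(x_i,\tau_i)$ up to an additive constant follows because any two points of $\cA_{c,i}$ are joined by a $c$-static curve, so $h^\infty_c((x_i,\tau_i),\cdot)$ and $h^\infty_c((x_i',\tau_i'),\cdot)$ differ by the constant $h^\infty_c((x_i,\tau_i),(x_i',\tau_i'))$; uniqueness of the elementary solution up to a constant follows from uniqueness of the weak KAM solution for the one-class perturbed system together with the established convergence.

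The main obstacle I expect is the uniform bound on the $\varepsilon V$-cost along the near-minimizing segments: one must show the time a $c$-minimizing segment spends outside a neighborhood of $\cA_{c,i}$ is bounded independently of the segment length $T_j$. This is where the graph property (Proposition \ref{upper semi}(1)) and the fact that $\alpha$/$\omega$-limits of $c$-minimizers lie in $\tilde\cA_{c,i}$ are essential, and it needs a careful compactness/shadowing argument rather than a one-line estimate; everything else (monotonicity, normalization, passing to the limit, base-point independence) is comparatively routine.
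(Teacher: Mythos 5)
Your overall strategy coincides with the paper's: normalize $u^-_{c,i,\varepsilon}$ to equal the perturbed Peierls barrier $h^\infty_{c,\varepsilon}((x_i,\tau_i),\cdot)$, observe $\alpha_\varepsilon(c)=\alpha(c)$, get the lower bound from $V\ge 0$, and reduce everything to $\lim_{\varepsilon\to 0^+}h^\infty_{c,\varepsilon}((x_i,\tau_i),\cdot)=h^\infty_c((x_i,\tau_i),\cdot)$. Those steps are fine. The gap is in your upper bound. You assert that ``$\alpha$- and $\omega$-limits of $c$-minimizers lie in $\tilde{\cA}(c)=\tilde{\cA}_{c,i}$'' and deduce that the near-minimizing segments for $h^\infty_c((x_i,\tau_i),(x,\tau))$ spend only a uniformly bounded time outside a neighborhood of $\cA_{c,i}$, hence incur only an $O(\varepsilon)$ extra cost from $\varepsilon V$. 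But in the setting of this theorem $\cA(c)$ has $k\ge 2$ classes, so $\tilde{\cA}(c)\neq\tilde{\cA}_{c,i}$, and the minimizers realizing $h^{t^*,t_n}_c(x_i,x)$ may linger, for times growing with $t_n$, near the other classes $\cA_{c,j}$ on which $V>0$ (this is exactly what happens in the multi-well pendulum example in Section 3.3). Hence the $\varepsilon V$-cost along these segments is not $O(\varepsilon)$ uniformly in $n$, and the claimed bound $h^\infty_{c,\varepsilon}\le h^\infty_c+O(\varepsilon)$ is not established; indeed the uniform $O(\varepsilon)$ rate is too strong (hyperbolic lingering typically yields corrections of order $\varepsilon\log(1/\varepsilon)$), even though the limit statement itself is true.

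The correct route, which is the paper's, is to exploit that $(x_i,\tau_i)$ lies in the Aubry set of both the unperturbed and the perturbed systems, so that $h^\infty$ based at $(x_i,\tau_i)$ coincides with the Ma\~n\'e potential, i.e.\ with an \emph{infimum} over finite-time actions (one prepends an almost-closed static loop at $(x_i,\tau_i)$, of essentially zero cost, to make the total time as large as one likes). Then for each \emph{fixed} $n$ one gets $h^\infty_{c,\varepsilon}((x_i,\tau_i),(x,\tau))\le \int_{t^*}^{t_n}(L+\varepsilon V-\eta_c)(d\gamma^n(t))\,dt+\alpha(c)(t_n-t^*)$ with $\gamma^n$ the unperturbed minimizer; letting $\varepsilon\to 0$ with $n$ fixed kills the $\varepsilon V$-term because its integral is finite for each fixed $n$, and letting $n\to\infty$ afterwards gives $\limsup_{\varepsilon\to 0}h^\infty_{c,\varepsilon}\le h^\infty_c$ with no control on where the minimizers travel. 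With that replacement your argument closes; the remaining points (monotonicity, passage to the limit, and base-point independence via $h^\infty_c((x_i,\tau_i),\cdot)-h^\infty_c((x_i',\tau_i'),\cdot)=\textup{const}$ for two points of the same class) are as in the paper.
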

\begin{proof}
We only give the proof for $u^-_{c,i}$ since $u^+_{c,i}$ is similar. Denote by $\alpha(c)$ and $\alpha_\vep(c)$ the value of Mather's $\alpha$-function at the cohomology class $c$ for the Lagrangians $L-\eta_c$ and $L-\eta_c+\vep V$ respectively, and denote by  $h^\infty_c((x_i,\tau_i),(x,\tau))$ and $h^\infty_{c,\vep}((x_i,\tau_i),(x,\tau))$ the corresponding Peierls barrier functions. We first claim that
\begin{equation}\label{limit of barrierfun}
 h^\infty_c((x_i,\tau_i),(x,\tau))=\lim\limits_{\vep\to0}h^\infty_{c,\vep}((x_i,\tau_i),(x,\tau)).
\end{equation}
Indeed, as $V\geq 0$  and its support does not intersect with $\cA_{c,i}$, we have $\alpha_\vep(c)=\alpha(c)$ and
$$h^\infty_c((x_i,\tau_i),(x,\tau))\leq \liminf\limits_{\vep\to0} h^\infty_{c,\vep}((x_i,\tau_i),(x,\tau)).$$

Now we turn to  the opposite inequality $ \limsup_{\vep\to0} h^\infty_{c,\vep}((x_i,\tau_i),(x,\tau))\leq h^\infty_c((x_i,\tau_i),(x,\tau))$. Assume by contradiction that there exist a subsequence $\{h^\infty_{c,\vep_k}\}_{k\in\N}$ and a point $(x',\tau')$ such that 
\begin{equation}\label{contradiction_i_k}
	\lim_{k\to\infty} h^\infty_{c,\vep_k}((x_i,\tau_i),(x',\tau'))> h^\infty_c((x_i,\tau_i),(x',\tau'))
\end{equation} 
For abbreviation, we denote 
\begin{equation}\label{phi_i_k}
	\phi^-_{c,\vep_k}(x,\tau):= h^\infty_{c,\vep_k}((x_i,\tau_i),(x,\tau)).
\end{equation}
By Definition  \ref{def of EWS}, $\cA_{c,i}$ is the only Aubry class for the Lagrangian $L-\eta_c+\vep_kV$, which gives $\phi^-_{c,\vep_k}(x_i,\tau_i)$ $=0$. Moreover, it is not hard to verify that the sequence $\{\phi^-_{c,\vep_k}\}_k$ is uniformly Lipschitz. Hence this sequence is also uniformly bounded. Thus, it follows from the Arzel\`{a}-Ascoli theorem that, by taking a subsequence if necessary, $\phi^-_{c,\vep_k}$  converges uniformly to a Lipschitz function $\phi^-_{c}$.

It is well known  in weak KAM theory that  $\phi^-_{c}$ is a backward weak KAM solution for $L-\eta_c$, then
  \[\phi^-_{c}(x,\tau)=\phi^-_{c}(x,\tau)-\phi^-_{c}(x_i,\tau_i)\leq h^\infty_{c}((x_i,\tau_i),(x,\tau)),\]
further, by letting  $k\to\infty$ on both sides of \eqref{phi_i_k}, we get
\[\lim_{k} h^\infty_{c,\vep_k}((x_i,\tau_i),(x,\tau))=\lim_{k}\phi^-_{c,\vep_k}(x,\tau)=\phi^-_c(x,\tau)\leq h^\infty_{c}((x_i,\tau_i),(x,\tau)),\]
which contradicts \eqref{contradiction_i_k}. This therefore proves equality $\eqref{limit of barrierfun}$.

Finally, we recall that $\cA_{c,i}$ is the unique Aubry class for  $L-\eta_c+\vep V$ ($\vep>0$). Then for any backward weak KAM solution $u^-_{c,i,\vep}$,
one has $u^-_{c,i,\vep}(\cdot)=h^\infty_{c,\vep}((x_i,\tau_i),\cdot)+C$ with $C$ a constant.  Now the theorem is evident from what we have proved.
\end{proof}
\begin{Rem}
By fixing a point  $(x_i, \tau_i)\in\cA_{c,i}$ for each index $i\in\{1,\cdots,k\}$, we conclude from Theorem \ref{representation of EWS} that the set of all backward  elementary weak KAM solutions is exactly
$\big\{h^\infty_c((x_i,\tau_i),\cdot)+C~:~C\in\R, ~ i=1,\cdots,k\big\}$, and the set of all forward elementary weak KAM solutions is exactly
$\big\{-h^\infty_c(\cdot,(x_i,\tau_i))+C ~: ~C\in\R, ~ i=1,\cdots,k\big\}$.
\end{Rem}

\subsection{Heteroclinic orbits between Aubry classes}
To study the heteroclinic trajectories from a variational viewpoint, we will use a special type of barrier function. Indeed, let $u^-_{c,i}(x,\tau)$ and $u^+_{c,j}(x,\tau)$ be a backward and a forward elementary weak KAM solution respectively. Now we define a function
\begin{equation}\label{anotherkind barr}
B_{c,i,j}(x,\tau):=u^-_{c,i}(x,\tau)-u^+_{c,j}(x,\tau), ~\textup{for each~} (x,\tau)\in M\times\T.
\end{equation}
Roughly speaking, it measures the  action along curves joining the Aubry class $\cA_{c,i}$ to $\cA_{c,j}$, we refer the reader to \cite{Be2008,CY2009,Ch2012} for more discussions.

In the sequel,  the notation   $\arg\min f$ denotes the minimal set $\{a ~|~f(a)=\min f\}$, then we have
\begin{Pro}\label{manejifenlei}
Suppose that the projected Aubry set $\cA(c)=\bigcup_{i=1}^k\cA_{c,i}$ consists of  $k$ $(k\geq 2)$ Aubry classes, then the projected Ma\~n\'e set
\begin{equation*}
\cN(c)=\bigcup_{i,j=1}^k \arg\min B_{c,i,j},
\end{equation*}
\end{Pro}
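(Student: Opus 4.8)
The plan is to prove the two inclusions $\cN(c)\subseteq\bigcup_{i,j}\arg\min B_{c,i,j}$ and $\bigcup_{i,j}\arg\min B_{c,i,j}\subseteq\cN(c)$ separately, using the representation of the elementary weak KAM solutions from Theorem \ref{representation of EWS} together with Mather's characterization \eqref{manebarrier} of $\cN(c)$ via $B_c^*$. First I would fix reference points $(x_i,\tau_i)\in\cA_{c,i}$ and normalize the elementary weak KAM solutions so that, by Theorem \ref{representation of EWS}, $u^-_{c,i}(x,\tau)=h_c^\infty((x_i,\tau_i),(x,\tau))$ and $u^+_{c,j}(x,\tau)=-h_c^\infty((x,\tau),(x_j,\tau_j))$. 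Then by \eqref{anotherkind barr},
\[
B_{c,i,j}(x,\tau)=h_c^\infty((x_i,\tau_i),(x,\tau))+h_c^\infty((x,\tau),(x_j,\tau_j)).
\]
The key elementary fact I would establish is the triangle-type inequality $h_c^\infty((x_i,\tau_i),(x,\tau))+h_c^\infty((x,\tau),(x_j,\tau_j))\ge h_c^\infty((x_i,\tau_i),(x_j,\tau_j))$ for all $(x,\tau)$, which follows from the superadditivity of $h_c^\infty$ along concatenated minimizing segments (taking the appropriate liminf over times). Consequently $\min B_{c,i,j}\le h_c^\infty((x_i,\tau_i),(x_j,\tau_j))$, and comparing with the definition of $B_c^*$ one sees that $B_{c,i,j}(x,\tau)-\big(\min B_{c,i,j}\big)$ differs from the $(i,j)$-term inside the minimum defining $B_c^*$ by a quantity controlled by how much $\min B_{c,i,j}$ falls short of $h_c^\infty((x_i,\tau_i),(x_j,\tau_j))$.

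To make the comparison exact rather than merely an inequality, I would show $\min B_{c,i,j}=h_c^\infty((x_i,\tau_i),(x_j,\tau_j))$; this holds because $h_c^\infty((x_i,\tau_i),\cdot)$ is a backward weak KAM solution and $-h_c^\infty(\cdot,(x_j,\tau_j))$ a forward one, and the minimum of the difference of a backward and a forward weak KAM solution is attained exactly on the set of points lying on a $c$-semistatic curve calibrated by both, which always exists joining any two Aubry classes. Granting this, $B_{c,i,j}(x,\tau)=0$ after subtracting the minimum precisely when the $(i,j)$-bracket in the definition of $B_c^*$ vanishes; taking the minimum over $i,j$ gives $\min_{i,j}\big(B_{c,i,j}-\min B_{c,i,j}\big)=B_c^*$, hence by \eqref{manebarrier},
\[
(x,\tau)\in\cN(c)\iff B_c^*(x,\tau)=0\iff \exists\,i,j:\ (x,\tau)\in\arg\min B_{c,i,j},
\]
which is the claimed identity. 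The $\alpha$- and $\omega$-limit sets of the calibrating curve lie in distinct (or equal) Aubry classes, matching indices $i,j$; this handles both the case of genuine heteroclinics ($i\ne j$) and the static case ($i=j$, giving $\arg\min B_{c,i,i}\supseteq\cA_{c,i}$).

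The main obstacle I anticipate is the equality $\min B_{c,i,j}=h_c^\infty((x_i,\tau_i),(x_j,\tau_j))$ and, correspondingly, identifying the minimizers of $B_{c,i,j}$ with points on $c$-semistatic curves running from $\cA_{c,i}$ to $\cA_{c,j}$. This requires care with the $\liminf$ in the definition of $h_c^\infty$: one must produce, via a diagonal/compactness argument on long minimizing segments from $x_i$ to $x$ and from $x$ to $x_j$ whose actions realize the barrier values, a bi-infinite minimizer through $(x,\tau)$ that is simultaneously calibrated by $u^-_{c,i}$ and $u^+_{c,j}$, and then invoke that its limit sets lie in the Aubry set. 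The identity $h_c^\infty((x_i,\tau_i),(x_j,\tau_j))=h_c^\infty((x_i,\tau_i),(x,\tau))+h_c^\infty((x,\tau),(x_j,\tau_j))$ along such a point, which upgrades the earlier inequality to equality, is exactly the statement that the point is $c$-semistatic, closing the argument. I would cite \cite{Be2008,CY2009,Ch2012} for the technical weak KAM facts underlying this compactness step.
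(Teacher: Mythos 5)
Your overall skeleton is the same as the paper's: normalize via Theorem \ref{representation of EWS} so that $B_{c,i,j}(x,\tau)=h_c^\infty((x_i,\tau_i),(x,\tau))+h_c^\infty((x,\tau),(x_j,\tau_j))$ up to a constant, use the triangle inequality for $h_c^\infty$, and translate back and forth with Mather's characterization \eqref{manebarrier} of $\cN(c)$ via $B_c^*$. That architecture is correct and closes both inclusions exactly as in the paper (for the forward inclusion one needs, as you implicitly use, that changing the base point within an Aubry class only shifts $h_c^\infty((x_i,\tau_i),\cdot)$ by a constant; this is again Theorem \ref{representation of EWS}).

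However, the step you single out as the ``main obstacle'' --- proving $\min B_{c,i,j}=h_c^\infty((x_i,\tau_i),(x_j,\tau_j))$ --- is not an obstacle at all, and the route you propose for it is built on a false premise. The equality is immediate: the triangle inequality gives $B_{c,i,j}\ge h_c^\infty((x_i,\tau_i),(x_j,\tau_j))$ everywhere, and evaluating at $(x,\tau)=(x_j,\tau_j)$ gives equality because $h_c^\infty((x_j,\tau_j),(x_j,\tau_j))=0$ for any Aubry point; no compactness or diagonal argument is needed, and the minimum is attained already on $\cA_{c,j}$ (and on $\cA_{c,i}$). By contrast, your justification --- that the minimum is attained on a $c$-semistatic curve calibrated by both solutions ``which always exists joining any two Aubry classes'' --- is wrong as a general statement: the paper's pendulum example $L=\frac{v^2}{2}-(\cos 8\pi x-1)$ has four Aubry classes with $\cN_{1,3}(c)=\emptyset$ even though $\arg\min B_{c,1,3}=\T$, so heteroclinic semistatic connections between a given pair of classes need not exist, and $\arg\min B_{c,i,j}$ need not consist of points on such connections. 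If you had pursued that route you would have been trying to prove something false; the correct identification of $\arg\min B_{c,i,j}\setminus\cA(c)$ with $\cN_{i,j}(c)$ holds only under extra hypotheses (two Aubry classes, Proposition \ref{double description}) and is not needed for the present proposition. Replace that paragraph with the one-line evaluation at $(x_j,\tau_j)$ and your proof is complete.
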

\begin{proof}
We first prove $ \cN(c)\supseteq\arg\min B_{c,i,j}$ for each $i, j$. Taking two points $(x_i,\tau_i)\in\cA_{c,i}$ and $(x_j,\tau_j)\in\cA_{c,j}$,  Theorem \ref{representation of EWS} implies that there exist two constants $C_i$ and $C_j$ such that
$$u^-_{c,i}(x,\tau)=h^\infty_c((x_i,\tau_i),(x,\tau))+C_i,\quad u^+_{c,j}(x,\tau)=-h^\infty_c((x,\tau),(x_j,\tau_j))+C_j.$$
Thus it's easy to compute that
$$\min B_{c,i,j}(x,\tau)=h^\infty_c((x_i,\tau_i),(x_j,\tau_j))+C_i-C_j.$$
If $(\tilde{x},\tilde{\tau})\in\arg\min B_{c,i,j}$, then
$$h^\infty_c((x_i,\tau_i),(\tilde{x},\tilde{\tau}))+C_i+h^\infty_c((\tilde{x},\tilde{\tau}),(x_j,\tau_j))-C_j=h^\infty_c((x_i,\tau_i),(x_j,\tau_j))+C_i-C_j,$$
namely $h^\infty_c((x_i,\tau_i),(\tilde{x},\tilde{\tau}))+h^\infty_c((\tilde{x},\tilde{\tau}),(x_j,\tau_j))-h^\infty_c((x_i,\tau_i),(x_j,\tau_j))=0$. By \eqref{manebarrier} one obtains $(\tilde{x},\tilde{\tau})\in\cN(c)$.

Now it remains to show $\cN(c)\subset\bigcup_{i,j=1}^k \arg\min B_{c,i,j}$. For each $(\bar{x},\bar{\tau})\in\cN(c)$, one deduces from \eqref{manebarrier} that there always exist $m, n\in\{1,2,\cdots,k\}$, and two points $(x_m,\tau_m)\in\cA_{c,m}$, $(x_n,\tau_n)\in\cA_{c,n}$ such that
$$h_c^\infty((x_m,\tau_m),(\bar{x},\bar{\tau}))+h_c^\infty((\bar{x},\bar{\tau}),(x_n, \tau_n))=h_c^\infty((x_m,\tau_m),(x_n, \tau_n)).$$
Combining with Theorem \ref{representation of EWS}, one gets that for each $(x,\tau)\in M\times\T$,
\begin{equation*}
\begin{split}
&u_{c,m}^-(\bar{x},\bar{\tau})-u_{c,n}^+(\bar{x},\bar{\tau})-\Big(u^-_{c,m}(x,\tau)-u^+_{c,n}(x,\tau)\Big)\\
=&h_c^\infty((x_m,\tau_m),(\bar{x},\bar{\tau}))+h_c^\infty((\bar{x},\bar{\tau}),(x_n, \tau_n))-\Big(h_c^\infty((x_m,\tau_m),(x,\tau))+h_c^\infty((x,\tau),(x_n, \tau_n))\Big)\\
=&h_c^\infty((x_m,\tau_m),(x_n, \tau_n))-\Big(h_c^\infty((x_m,\tau_m),(x,\tau))+h_c^\infty((x,\tau),(x_n, \tau_n))\Big)\leq 0,
\end{split}
\end{equation*}
hence $(\bar{x},\bar{\tau})\in \arg\min B_{c,m,n}$. This completes the proof.
\end{proof}

From now on, we denote by $\cN_{i,j}(c)$ the set of $c$-semi static curves which are negatively asymptotic to $\cA_{c,i}$ and positively asymptotic to $\cA_{c,j}$, i.e.,
\begin{equation}\label{ij maneorbit}
\cN_{i,j}(c)=\{(x,\tau): \exists \textup{~a~} c\textup{-semi static curve~} \gamma, \gamma(\tau)=x,  \textup{~and~} \alpha(\gamma(t),t)\subset\cA_{c,i},\omega(\gamma(t),t)\subseteq\cA_{c,j} \}.
\end{equation}
Obviously, $\cN_{i,j}(c)\subset\cN(c)$, and  each point $(x,\tau)\in\cN_{i,j}(c)$ satisfies
$$h^\infty_c((x_i,\tau_i),(x,\tau))+h^\infty_c((x,\tau),(x_j,\tau_j))=h^\infty_c((x_i,\tau_i),(x_j,\tau_j)),$$
then $\cN_{i,j}(c)\subset\arg\min B_{c,i,j}$ thanks to Theorem \ref{representation of EWS}. Moreover, $\cA_{c,i}\cup\cA_{c,j}\cup\cN_{i,j}(c)\subseteq\arg\min B_{c,i,j}$.

Conversely, the equality $\arg\min B_{c,i,j}\setminus\cA(c)=\cN_{i,j}(c)$ may not hold in general. For instance,  the pendulum Lagrangian $L=\frac{v^2}{2}-(\cos8\pi x-1)$  has four Aubry classes for the cohomology class $c=0\in H^1(\T,\R)$: 
$$\widetilde{\cA}_1=(0,0), \widetilde{\cA}_2=(\frac{1}{4},0),  \widetilde{\cA}_3=(\frac{1}{2},0),  \widetilde{\cA}_4=(\frac{3}{4},0).$$
They are all hyperbolic fixed points. By symmetry, it's easy to compute that $\arg\min B_{c,1,3}=\T$ but $\cN_{1,3}(c)=\emptyset.$

However, in the case of only two Aubry classes, we can give a precise description.
\begin{Pro}\label{double description}
Suppose that the projected Aubry set $\cA(c)=\cA_{c,1}\cup\cA_{c,2}$ has only two Aubry classes, then
$$ \arg\min B_{c,1,2}= \cA_{c,1}\cup\cA_{c,2}\cup\cN_{1,2}(c)\quad\textup{and}\quad \arg\min B_{c,2,1}=\cA_{c,1}\cup\cA_{c,2}\cup\cN_{2,1}(c).$$
\end{Pro}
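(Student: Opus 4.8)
The plan is to prove the two equalities by a combination of the general inclusions already established and a careful analysis specific to the two-class situation. By the discussion immediately preceding the statement, we already know that $\cA_{c,1}\cup\cA_{c,2}\cup\cN_{1,2}(c)\subseteq\arg\min B_{c,1,2}$ and $\cA_{c,1}\cup\cA_{c,2}\cup\cN_{2,1}(c)\subseteq\arg\min B_{c,2,1}$, so only the reverse inclusions require work. I will focus on $\arg\min B_{c,1,2}$; the argument for $\arg\min B_{c,2,1}$ is symmetric (swap the roles of the indices $1$ and $2$).

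First I would take $(\bar x,\bar\tau)\in\arg\min B_{c,1,2}$ and use Theorem \ref{representation of EWS} to write, with fixed base points $(x_1,\tau_1)\in\cA_{c,1}$ and $(x_2,\tau_2)\in\cA_{c,2}$,
\[
h^\infty_c\big((x_1,\tau_1),(\bar x,\bar\tau)\big)+h^\infty_c\big((\bar x,\bar\tau),(x_2,\tau_2)\big)=h^\infty_c\big((x_1,\tau_1),(x_2,\tau_2)\big),
\]
exactly as in the proof of Proposition \ref{manejifenlei}; hence $(\bar x,\bar\tau)\in\cN(c)$. Since $h^\infty_c((x_1,\tau_1),\cdot)$ is a backward weak KAM solution and $-h^\infty_c(\cdot,(x_2,\tau_2))$ is a forward weak KAM solution, each admits a calibrated curve through $(\bar x,\bar\tau)$: there is a backward calibrated curve $\gamma^-:(-\infty,\bar\tau]\to M$ and a forward calibrated curve $\gamma^+:[\bar\tau,+\infty)\to M$, both passing through $\bar x$ at time $\bar\tau$, and the above additivity relation forces the concatenation $\gamma$ of $\gamma^-$ and $\gamma^+$ to be a single $c$-semi static curve. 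The $\alpha$-limit set of $\gamma$ lies in $\tilde\cA(c)$ and the $\omega$-limit set of $\gamma$ lies in $\tilde\cA(c)$; because the Aubry set $\tilde\cA(c)$ is flow-invariant and its projection is the disjoint union of the two classes $\cA_{c,1}$ and $\cA_{c,2}$ (disjointness of distinct Aubry classes is built into the pseudo-metric $d_c$), each of $\alpha(\gamma)$ and $\omega(\gamma)$ is contained in exactly one class.

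Now I would split into cases according to which classes the $\alpha$- and $\omega$-limits meet. If $\alpha(\gamma)\subseteq\cA_{c,1}$ and $\omega(\gamma)\subseteq\cA_{c,2}$, then by definition \eqref{ij maneorbit} we get $(\bar x,\bar\tau)\in\cN_{1,2}(c)$, which is what we want. The remaining three cases are: $\alpha(\gamma),\omega(\gamma)$ both in $\cA_{c,1}$; both in $\cA_{c,2}$; or $\alpha(\gamma)\subseteq\cA_{c,2}$ and $\omega(\gamma)\subseteq\cA_{c,1}$. In each of these I must show $(\bar x,\bar\tau)$ already lies in $\cA_{c,1}\cup\cA_{c,2}$. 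The point is that the calibration identity $h^\infty_c((x_1,\tau_1),(\bar x,\bar\tau))+h^\infty_c((\bar x,\bar\tau),(x_2,\tau_2))=h^\infty_c((x_1,\tau_1),(x_2,\tau_2))$ together with the triangle inequality for $h^\infty_c$ relative to any point of the limit class, and the fact that $h^\infty_c((x_2,\tau_2),(x_1,\tau_1))+h^\infty_c((x_1,\tau_1),(x_2,\tau_2))\ge 0$ with equality characterizing membership in a common Aubry class, pins $(\bar x,\bar\tau)$ down. For instance if both limits lie in $\cA_{c,1}$, then $\gamma$ is also $c$-static (it is semi static and bi-asymptotic to a single class), hence $(\bar x,\bar\tau)\in\cA(c)$, and it cannot lie in $\cA_{c,2}$ since $d_c$ would then vanish between a point of $\cA_{c,1}$ and $(\bar x,\bar\tau)$, forcing $(\bar x,\bar\tau)\in\cA_{c,1}$ anyway; the case of both limits in $\cA_{c,2}$ is identical. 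The case $\alpha(\gamma)\subseteq\cA_{c,2}$, $\omega(\gamma)\subseteq\cA_{c,1}$ is the one to watch: here I would use that $(\bar x,\bar\tau)$ lies on a $c$-semi static curve from $\cA_{c,2}$ to $\cA_{c,1}$, so $h^\infty_c((x_2,\tau_2),(\bar x,\bar\tau))+h^\infty_c((\bar x,\bar\tau),(x_1,\tau_1))=h^\infty_c((x_2,\tau_2),(x_1,\tau_1))$, and adding this to the original identity and regrouping via the triangle inequality yields $d_c\big((x_1,\tau_1),(x_2,\tau_2)\big)\le 0$, i.e. the two classes coincide --- contradicting $k=2$ --- unless $(\bar x,\bar\tau)$ actually sits inside $\cA(c)$, in which case we are again done.

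The main obstacle, and the step I would be most careful with, is precisely this last case analysis: making rigorous the claim that a $c$-semi static curve through $(\bar x,\bar\tau)$ whose limit behaviour does not match the pattern ``from $\cA_{c,1}$ to $\cA_{c,2}$'' is forced to lie in the Aubry set itself. This is where one genuinely uses that there are exactly two classes (the pendulum counterexample with four classes given before the statement shows the conclusion fails otherwise), and it relies on combining the calibration/additivity identities with the triangle inequality for $h^\infty_c$ and the characterization \eqref{manebarrier} of $\cN(c)$ together with the characterization of Aubry classes via $d_c$. Once this is done, the inclusion $\arg\min B_{c,1,2}\subseteq\cA_{c,1}\cup\cA_{c,2}\cup\cN_{1,2}(c)$ follows, and combined with the reverse inclusion already noted we obtain the first equality; the second equality is obtained verbatim after exchanging the indices $1$ and $2$.
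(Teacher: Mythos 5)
Your overall route is the same as the paper's: reduce to the inclusion $\arg\min B_{c,1,2}\subseteq\cA_{c,1}\cup\cA_{c,2}\cup\cN_{1,2}(c)$, produce through a minimum point $(\bar x,\bar\tau)$ a concatenated $c$-semi static curve $\gamma$ calibrated by $u^-_{c,1}$ in the past and by $u^+_{c,2}$ in the future, and decide the membership of $(\bar x,\bar\tau)$ according to which Aubry classes contain the $\alpha$- and $\omega$-limit sets. The easy inclusion you quote, the case $\alpha(\gamma)\subseteq\cA_{c,1}$, $\omega(\gamma)\subseteq\cA_{c,2}$, and the case where both limit sets lie in a single class (a semi static curve bi-asymptotic to one Aubry class is static, hence $(\bar x,\bar\tau)\in\cA(c)$) are all correct and agree with the paper.

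The gap is in the decisive case $\alpha(\gamma)\subseteq\cA_{c,2}$, $\omega(\gamma)\subseteq\cA_{c,1}$. Writing $h(a,b)$ for $h_c^\infty((a,0),(b,0))$ and taking $\bar\tau=0$, your two identities are $h(x_1,\bar x)+h(\bar x,x_2)=h(x_1,x_2)$ and $h(x_2,\bar x)+h(\bar x,x_1)=h(x_2,x_1)$; both are valid, but adding them gives $d_c(x_1,\bar x)+d_c(\bar x,x_2)=d_c(x_1,x_2)$, in which every term is nonnegative — this only says $\bar x$ is a $d_c$-midpoint of $x_1,x_2$ and yields no contradiction, and in particular it does not give $d_c(x_1,x_2)\le 0$ as you claim. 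To exclude this case you must exploit that the limit sets sit in the ``wrong'' classes: since $\gamma^-$ is calibrated by $h(x_1,\cdot)$ and is negatively asymptotic to $\cA_{c,2}$, the barrier factors through that class and $h(x_1,\bar x)=h(x_1,x_2)+h(x_2,\bar x)$; symmetrically $h(\bar x,x_2)=h(\bar x,x_1)+h(x_1,x_2)$. Substituting these into the minimality identity gives $h(x_2,\bar x)+h(\bar x,x_1)=-h(x_1,x_2)$, and the triangle inequality $h(x_2,\bar x)+h(\bar x,x_1)\ge h(x_2,x_1)$ then forces $d_c(x_1,x_2)\le 0$, contradicting the assumption of two distinct classes. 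This is exactly what the paper's chain identity $h(x_1,x_2)=h(x_1,\alpha)+h(\alpha,\omega)+h(\omega,x_2)$ encodes in one stroke. So the strategy is right, but the step ``add the two identities and regroup via the triangle inequality'' must be replaced by this substitution argument.
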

\begin{proof}
We only prove $\arg\min B_{c,1,2}= \cA_{c,1}\cup\cA_{c,2}\cup\cN_{1,2}(c)$ and the other case is similar. By the analysis above, it only remains for us to verify $\arg\min B_{c,1,2}\subset\cA_{c,1}\cup\cA_{c,2}\cup\cN_{1,2}(c)$. Indeed, for each point$(x,\tau)\in\arg\min B_{c,1,2}$, we take $\tau=0$ for simplicity, then
\begin{equation}\label{Cor1}
B_{c,1,2}(x,0)=u^-_{c,1}(x,0)-u^+_{c,2}(x,0)=\min B_{c,1,2},
\end{equation}
and Proposition  \ref{manejifenlei} implies that there exists a $c$-semi static curve  $\gamma:\R\to M$, $\gamma(0)=x$, to be calibrated by $u^-_{c,1}$ on $(-\infty, 0]$ and be calibrated by $u^+_{c,2}$ on $(0, +\infty]$.

Next, there exist two points $(\alpha,0)$, $(\omega, 0)\in\cA(c)$ and a sequence of positive integers $\{m_k\}_{k}, \{n_k\}_{k}$ $\subset \Z^+$ such that
$$\lim_{k\to\infty}\gamma(-m_k)=\alpha\textup{~and~}\lim_{k\to\infty}\gamma(n_k)=\omega.$$
By the calibration property, 
\begin{align*}
	&u^-_{c,1}(\gamma(0),0)-u^-_{c,1}(\gamma(-m_k),0)+u^+_{c,2}(\gamma(n_k),0)-u^+_{c,2}(\gamma(0),0)\\
	=&\int_{-m_k}^{n_k}L(d\gamma(t),t)- \eta_c(d\gamma(t))+\alpha(c)\, dt.
\end{align*}
Let $\lim\inf k\to\infty$, 
\begin{equation}\label{Cor2}
B_{c,1,2}(x,0)=u^-_{c,1}(\alpha,0)-u^+_{c,2}(\omega,0)+h_c^\infty\big((\alpha,0),(\omega,0)\big).
\end{equation}

On the other hand, without loss of  generality (see Theorem \ref{representation of EWS}), we could assume $u^-_{c,1}(x,0)=h_c^\infty\big((x_1,0),(x,0)\big)$ with $(x_1,0)\in\cA_{c,1}$ and $u^+_{c,2}(x,0)=-h_c^\infty\big((x,0),(x_2,0)\big)$ with  $(x_2,0)\in\cA_{c,2}$. Then equalities  \eqref{Cor1} and \eqref{Cor2} together give rise to
\begin{equation*}
h_c^\infty\big((x_1,0),(x_2,0)\big)=h_c^\infty\big((x_1,0),(\alpha,0)\big)+h_c^\infty\big((\omega,0),(x_2,0)\big)+h_c^\infty\big((\alpha,0),(\omega,0)\big),
\end{equation*}
this could happen only if either $(\alpha,0), (\omega,0)$ belong to the same Aubry class or $(\alpha,0)\in\cA_{c,1}$, $(\omega,0)\in\cA_{c,2}$. This therefore completes the proof.
\end{proof}

Proposition \ref{double description} will be fully exploited in Section \ref{sec proof main} where we extend the Lagrangian to a double covering space such that the lift of the Aubry set contains two Aubry classes.

\section{Variational mechanism of diffusing orbits}\label{sec local and global}
In this section, we aim to give a master theorem  which guarantees the existence of diffusion for Tonelli Lagrangian $L: TM\times\T\to\R$ with $M=\T^n$.  Our construction of diffusion is variational, which requires less information about the geometric structure.  The orbits are constructed by shadowing a sequence of local connecting orbits, along each of them the Lagrangian action attains ``local minimum". Basically, among them there are two types of local connecting orbits, one is  based on Mather's variational mechanism  constructing orbits with respect to the cohomology equivalence \cite{Ma1993,Ma1995}, the other one is based on Arnold's geometric mechanism \cite{Ar1964} whose variational version was first achieved by Bessi \cite{Bessi1996} for Arnold's original example, and was later generalized to more general systems \cite{CY2004,CY2009,Be2008}.

Given a cohomology class $c\in H^1(M,\R) $, following Mather, we define
\begin{equation*}
\mathbb{V}_{c}=\bigcap_U\{i_{U*}H_1(U,\R): U\, \text{is a neighborhood of}\ \cN_0(c) \},
\end{equation*}
Here, $i_{U*}:H_1(U,\R)\to H_1(M,\R)$ is the mapping induced by the inclusion map $i_U$: $U\to M$, and $\cN_0(c)$ denotes the time-0 section of the projected Ma\~n\'e set $\cN(c)$ . Let $\mathbb{V}_{c}^{\bot}\subset  H^1(M,\mathbb{R})$ denote the annihilator of $\mathbb{V}_{c}$, i.e. $c'\in \mathbb{V}_{c}^{\bot}$ if and only if $\langle c',h \rangle =0$ for all $h\in \mathbb{V}_c$. Clearly,
\begin{equation*}
\mathbb{V}_{c}^{\bot}=\bigcup_U\{\ker i_{U}^*: U\, \text{is a neighborhood of}\, \cN_0(c)\}.
\end{equation*}
In fact, Mather has proved that there exists a neighborhood $U$ of $\cN_0(c)$ in $M$ such that $\mathbb{V}_{c}=i_{U^*}H_1(U,\R)$ and $\mathbb{V}^\bot_{c}=\ker i^*_U$ (see \cite{Ma1993}). Then we can introduce the cohomology equivalence (also known as $c$-equivalence).

\begin{Def}[\emph{Mather's $c$-equivalence}]\label{def_c_equivalenve}
We say that $c,c'\in H^1(M,\R)$ are $c$-equivalent if there exists a continuous curve $\Gamma$: $[0,1]\to H^1(M,\R)$ such that $\Gamma(0)=c$, $\Gamma(1)=c'$ and for each $s_0\in [0,1]$, $\exists$ $\vep>0$ such that $\Gamma(s)-\Gamma(s_0)\in \mathbb{V}_{{\Gamma}(s_0)}^{\bot}$ whenever $|s-s_0|<\vep$  and $s\in [0,1]$.
\end{Def}

By making full use of the cohomology equivalence,  Mather obtained a remarkable result on connecting orbits : if $c$ is equivalent to $c'$, the system has an orbit which in the infinite past tends to the Aubry set $\widetilde{\cA}(c)$ and  in the infinite future tends to the Aubry set $\widetilde{\cA}(c')$ \cite{Ma1993}. 

Next, we recall Arnold's famous example in \cite{Ar1964}: when the stable and unstable manifolds of an invariant circle transversally intersect each other, then the unstable manifold of this circle would also intersect the stable manifold of another invariant circle nearby. To understand this mechanism from a variational viewpoint, we let $\check{\pi}:\check{M}\rightarrow \T^n$ be a finite covering of $\T^n$. Denote by $\widetilde{\cN}(c,\check{M}), \widetilde{\cA}(c,\check{M})$ the corresponding Ma\~{n}\'{e} set and Aubry set with respect to  $\check{M}$. $\widetilde{\cA}(c,\check{M})$  may have several Aubry classes even if $\widetilde{\cA}(c)$ is unique. Here, we would like to emphasize that $\check{\pi}\widetilde{\cA}(c,\check{M})=\widetilde{\cA}(c)$. Also, it is not necessary to work always in a  nontrivial finite covering space, one can choose $\check{M}=M$ if the Aubry set already contains more than one classes.
Hence, for Arnold's famous example, the intersection of the stable and unstable manifolds implies that the set $\check{\pi}\cN(c,\check{M})\big|_{t=0}\setminus\big(\cA(c)\big|_{t=0}+\delta\big)$
is  discrete. 
Here, $\cA(c)\big|_{t=0}+\delta$ stands for a $\delta$-neighborhood of the set $\cA(c)\big|_{t=0}$.

This leads us to introduce the concept of \emph{generalized transition chain}. This notion could be found in \cite[Definition 5.1]{CY2009} as a generalization of  Arnold's transition chain  \cite{Ar1964}. In this paper, we adopt the definition as in \cite[Definition 4.1]{Ch2017} (see also \cite[Definition 2.2]{Ch2018}).
\begin{Def}[\emph{Generalized transition chain}]\label{transition chain}
Two cohomology classes $c, c'\in H^1(M,\R)$ are joined by a generalized transition chain if a continuous path $\Gamma: [0,1]\to H^1(M,\R)$ exists  such that $\Gamma(0)=c, \Gamma(1)=c'$, and for each $s\in[0,1]$ at least one of the following cases takes place:
\begin{enumerate}[(1)]
\item There is $\delta_s>0$ such that for each $s'\in (s-\delta_s, s+\delta_s)\bigcap  [0,1]$, $\Gamma(s')$ is $c$-equivalent to $\Gamma(s)$.
\item There exist a finite covering  $\check{\pi}:\check{M}\to M$ and a small $\delta_s>0$ such that the set
$\check{\pi}\cN(\Gamma(s),\check{M})\big|_{t=0}$ $\setminus$ $\big(\cA(\Gamma(s))\big|_{t=0}+\delta_s\big)$ is non-empty and totally disconnected.  $\cA(\Gamma(s'))$ lies in a neighborhood of $\cA(\Gamma(s))$ provided $|s'-s|$ is small.
\end{enumerate}
\end{Def}

We would like to emphasize that, the statement ``$\cA(\Gamma(s'))$ lies in a neighborhood of $\cA(\Gamma(s))$ provided $|s'-s|$ is small" in condition (2) could be guaranteed by the upper semi-continuity of Aubry sets. In fact, this upper semi-continuity is always true in our model since  the number of Aubry classes is only finite  (in fact, two at most), see \cite{Be2010On}. Also, condition (2) appears weaker than the condition of transversal intersection of stable and unstable manifolds because it still  works when the intersection is only topologically transversal. Our condition (2) is usually applied to the case where the Aubry set $\cA(\Gamma(s))$ is contained in a neighborhood of a lower dimensional torus, while condition (1) is usually applied to the case where the Ma\~n\'e set $\cN(\Gamma(s))$ is homologically trivial. 

Along a generalized transition chain, one can construct an orbit along which there is a substantial variation:
\begin{The}\label{generalized transition thm}
If $c$, $c'\in H^1(M,\R)$ are connected by a generalized transition chain $\Gamma$, then
\begin{enumerate}[\rm(1)]
\item there exists an orbit $(d\gamma(t),t)$ of the Euler-Lagrange flow  connecting the Aubry set $\widetilde{\cA}(c)$ to $\widetilde{\cA}(c')$, which means the $\alpha$-limit set $\alpha(d\gamma(t),t)\subset\widetilde{\cA}(c)$ and the $\omega$-limit set $\omega(d\gamma(t),t)\subset\widetilde{\cA}(c')$.
\item for any $c_1,\cdots, c_k\in \Gamma$ and small $\vep>0$, there exist an orbit $(d\gamma(t), t)$ of the Euler-Lagrange flow  and times $t_1<\cdots<t_k$, such that the orbit $(d\gamma(t),t)$ passes through the $\vep$-neighborhood of $\widetilde{\cA}(c_\ell)$ at the time $t=t_\ell$.
\end{enumerate}
\end{The}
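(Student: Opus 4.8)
The plan is to prove Theorem \ref{generalized transition thm} by a shadowing argument that stitches together local connecting orbits, one for each value of the parameter $s$ along the chain $\Gamma$. First I would invoke compactness of $[0,1]$ to reduce the problem to a finite list of parameter values $0 = s_0 < s_1 < \cdots < s_N = 1$ together with cohomology classes $c = \Gamma(s_0), \Gamma(s_1), \ldots, \Gamma(s_N) = c'$ such that consecutive pairs $(\Gamma(s_{m}), \Gamma(s_{m+1}))$ are ``close'' in the sense that either they are $c$-equivalent (case (1)) or they are handled by one application of case (2) (i.e.\ $\Gamma(s_{m+1})$ lies in the $\delta_{s_m}$-neighborhood described there, after passing to an appropriate finite cover $\check M$). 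The subadditivity / transitivity of the relation ``there is an orbit from $\tilde\cA(c)$ to $\tilde\cA(c')$'' then lets me concatenate, provided each individual link produces a genuine orbit (not merely a sequence of approximate minimizers) and provided the $\omega$-limit of one link can be taken as the $\alpha$-limit of the next.

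The heart of the argument is therefore the \emph{single-link} statement, which splits into two lemmas. For a link of type (1), where $\Gamma(s')$ is $c$-equivalent to $\Gamma(s)$ on a parameter interval, I would cite Mather's connecting theorem \cite{Ma1993}: $c$-equivalence of $c_0$ and $c_1$ yields an Euler--Lagrange orbit whose $\alpha$-limit lies in $\tilde\cA(c_0)$ and whose $\omega$-limit lies in $\tilde\cA(c_1)$. For a link of type (2), where after lifting to a finite cover $\check M$ the set $\check\pi\,\cN(c,\check M)\big|_{t=0} \setminus (\cA(c)\big|_{t=0} + \delta)$ is nonempty and totally disconnected, I would use the variational mechanism from \cite{CY2004,CY2009,Be2008}: the total disconnectedness guarantees that a point of $\cN_{i,j}(c)$ realizing a heteroclinic between two Aubry classes $\cA_{c,i}$ and $\cA_{c,j}$ on the cover can be isolated by a small ``window'' whose boundary is avoided by minimizers, so that a constrained minimization of the action functional $A_c$ over curves crossing that window produces an interior minimizer, hence a true orbit. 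Upper semi-continuity of the Aubry set (Proposition \ref{upper semi}(2)), which holds here because there are only finitely many Aubry classes, ensures that $\tilde\cA(\Gamma(s'))$ stays near $\tilde\cA(\Gamma(s))$, so the endpoints of consecutive links can be matched.

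To go from the qualitative statement (1) to the quantitative statement (2) — an orbit passing through prescribed $\varepsilon$-neighborhoods of $\tilde\cA(c_1), \ldots, \tilde\cA(c_k)$ at prescribed ordered times — I would refine the shadowing: instead of concatenating infinite semi-static pieces, I concatenate finite-time minimizing segments. Between two consecutive prescribed classes $c_\ell$ and $c_{\ell+1}$ (both lying on $\Gamma$, hence joined by a sub-chain) the same local mechanism produces, for any sufficiently large time, a minimizing segment that starts $\varepsilon$-close to $\tilde\cA(c_\ell)$ and ends $\varepsilon$-close to $\tilde\cA(c_{\ell+1})$; a standard a priori compactness estimate for action-minimizing curves of a Tonelli Lagrangian (uniform bound on $\|\dot\gamma\|$, hence equicontinuity) lets me pass to the limit and glue these segments into a single $C^1$ orbit, at the cost of choosing the intermediate times $t_\ell$ far enough apart. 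The glued curve is a genuine solution of the Euler--Lagrange equation because each piece is, and the junctions are forced to be smooth by the minimality (a corner would allow a strict decrease of the action).

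The main obstacle, and the step I expect to require the most care, is the type-(2) link: turning ``totally disconnected'' into an actual local minimizer. The difficulty is the classical one behind the large gap problem — one must construct the confining window in the cover $\check M$ so that its boundary is disjoint from the projected Ma\~n\'e set, use Proposition \ref{manejifenlei} and Proposition \ref{double description} (lifting so that exactly two Aubry classes appear) to identify $\arg\min B_{c,i,j}$ with $\cA_{c,i}\cup\cA_{c,j}\cup\cN_{i,j}(c)$, and then exploit total disconnectedness to separate the heteroclinic component from the Aubry classes by a barrier in the values of $B_{c,i,j}$ (equivalently of the elementary weak KAM solutions $u^-_{c,i} - u^+_{c,j}$ from Theorem \ref{representation of EWS}). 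Controlling the dependence of all these objects on the parameter $s$ — so that finitely many links genuinely suffice — relies on the upper semi-continuity and on the finiteness of Aubry classes, both of which are available in the present \emph{a priori} unstable setting; this is what makes the scheme go through.
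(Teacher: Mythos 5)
Your proposal is correct and follows essentially the same route as the paper's Appendix B: a finite decomposition of the chain into type-$c$ links (Mather's $c$-equivalence mechanism via modified time-step Lagrangians) and type-$h$ links (constrained minimization through a window isolated by total disconnectedness), matched up by upper semi-continuity and glued into a single orbit whose smoothness at the junctions is forced by minimality. The only cosmetic difference is that the paper realizes the gluing by minimizing the action of one globally modified Lagrangian $L^*=L-\eta_0-\sum_i k_i^*(\mu_i+\psi_i)$ over a long time interval rather than concatenating separately constructed segments, but the underlying mechanism is identical.
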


The proof of Theorem \ref{generalized transition thm} is similar to  that of \cite[Section 5]{CY2009} and can also be found in \cite[Section 7]{Ch2012}. This variational mechanism of connecting orbits has already been used in \cite{Ch2017,Ch2018}. However, for the reader's convenience, we provide a proof of the theorem in appendix \ref{sec_proof_of_connectingthm}. We end this section by a simple illustration of the diffusing orbits in geometry, such orbits  constructed in Theorem $\ref{main theorem}$ and Theorem \ref{main thm2} would drift near the normally hyperbolic cylinder (see figure \ref{picture1}).
\begin{figure}[H]
    \centering
    \includegraphics[width=7cm]{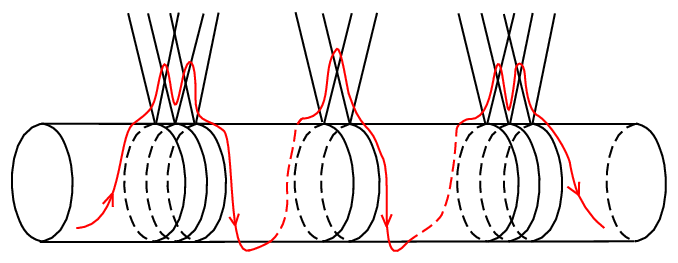}
    \caption{A global connecting orbit shadowing the generalized transition chain }\label{picture1}
\end{figure}

\section{Technical estimates on Gevrey functions}\label{some properties Gev}
In this part, we provide some necessary results for Gevrey functions defined on the torus $\T^n=\R^n/\Z^n$, which will be useful for our choice of Gevrey space in section \ref{determin of coeff}. We present this section in a self-contained way for the reader's convenience.

The variational  proof of  the genericity of Arnold diffusion
 usually depends on the existence of functions with compact support, i.e. bump functions. This technique cannot apply to the problem of analytic genericity since  no analytic function has compact support. However, the bump function does exist in the Gevrey-$\alpha$ category with $\alpha>1$. Here we give a modified Gevrey bump function which is based on the one constructed in \cite{MS2004}.

\begin{Lem}[Gevrey bump function]\label{Gevrey bumpfunction}
Let $\alpha>1, \bL>0$, $D=[a_1,b_1]\times\cdots\times[a_n,b_n]$$\varsubsetneq\T^n$ be a $n$-dimensional cube and $U$ be an open neighborhood of $D$. Then there exists $f\in\bG^{\alpha,\bL}(\T^n)$ such that $0\leq f\leq 1$, $\textup{supp}f\subset U$, and
$$f(x)=1 \Longleftrightarrow x\in D.$$
\end{Lem}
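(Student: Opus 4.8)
The plan is to separate variables and reduce everything to a one‑dimensional construction. I would seek $f$ in the form $f(x)=\prod_{i=1}^n g_i(x_i)$, where each $g_i\in\bG^{\alpha,\bL}(\T)$ is a one‑variable ``plateau'' that equals $1$ exactly on $[a_i,b_i]$ and is supported in a slightly larger interval. Since $D$ is compact and $U$ is an open neighbourhood of it, one can first fix $\rho>0$ so small that $\prod_{i=1}^n[a_i-\rho,b_i+\rho]\subset U$; the hypothesis $D\varsubsetneq\T^n$ guarantees that these enlarged intervals can be chosen not to wrap around the circle, so that each $g_i$ descends to $\T$. With such $g_i$'s, the product $f$ satisfies $0\le f\le1$, $\operatorname{supp}f\subset\prod_i[a_i-\rho,b_i+\rho]\subset U$, and $f(x)=1$ iff $g_i(x_i)=1$ for all $i$, i.e.\ iff $x\in D$. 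Finally, viewing $\tilde g_i(x):=g_i(x_i)$ as a function of all $n$ variables one has $\|\tilde g_i\|_{\alpha,\bL}=\|g_i\|_{\alpha,\bL}$ (all mixed derivatives vanish), so by the algebra property (G\ref{algebra norm}), $\|f\|_{\alpha,\bL}\le\prod_i\|g_i\|_{\alpha,\bL}<\infty$. Thus the whole problem reduces to the one‑dimensional case.

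For the one‑dimensional building block I would follow, with a modification, the Marco--Sauzin construction. Put $c_0:=4\bL^\alpha$, fix a large integer $N$, and set $\ell_m:=c_0(m+N)^{-\alpha}$ for $m\ge1$; let $h_m:=\ell_m^{-1}\mathbf{1}_{[-\ell_m/2,\ell_m/2]}$ and define $\chi:=\lim_{M\to\infty}h_1*h_2*\cdots*h_M$. Because $\alpha>1$ the series $W:=\sum_{m\ge1}\ell_m$ converges, and $W\to0$ as $N\to\infty$; the partial products converge in every $C^k$ norm to a function $\chi\ge0$ with $\int_\R\chi=1$, $\operatorname{supp}\chi=[-W/2,W/2]$, and $\chi>0$ on the interior. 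The key estimate is $\|\partial^k\chi\|_{C^0}\le 2^k\,\ell_{k+1}^{-1}\prod_{m=1}^k\ell_m^{-1}$, which holds because each $\partial h_m$ is a signed measure of total variation $2/\ell_m$ so that convolving with it multiplies the sup norm by at most $2/\ell_m$, while $\|h_{k+1}*h_{k+2}*\cdots\|_{C^0}\le\ell_{k+1}^{-1}$; these bounds are uniform in $M$, which is also what yields the $C^\infty$ convergence. Substituting $\ell_m=c_0(m+N)^{-\alpha}$ gives $\prod_{m=1}^k\ell_m^{-1}=c_0^{-k}\big((k+N)!/N!\big)^\alpha\le c_0^{-k}(k+N)^{\alpha N}k!^\alpha/N!^\alpha$, so $\frac{\bL^{k\alpha}}{k!^\alpha}\|\partial^k\chi\|_{C^0}$ is bounded by $(2\bL^\alpha/c_0)^k=(1/2)^k$ times a power of $k$ and a constant depending on $N$; hence $\sum_k\frac{\bL^{k\alpha}}{k!^\alpha}\|\partial^k\chi\|_{C^0}<\infty$, i.e.\ $\chi\in\bG^{\alpha,\bL}(\R)$, with support width $W\le\rho$ once $N$ is large enough.

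From $\chi$ I would pass to $g_i$ in two steps. Let $\theta(x):=\int_{-\infty}^x\chi(s)\,ds$: then $0\le\theta\le1$, $\theta\equiv0$ on $(-\infty,-W/2]$, $\theta\equiv1$ on $[W/2,+\infty)$, $\theta$ is nondecreasing, and $\theta$ is \emph{strictly} increasing on $(-W/2,W/2)$ since $\chi>0$ there; moreover $\partial^k\theta=\partial^{k-1}\chi$ for $k\ge1$ together with $\frac{\bL^{k\alpha}}{k!^\alpha}\le\bL^\alpha\frac{\bL^{(k-1)\alpha}}{(k-1)!^\alpha}$ gives $\|\theta\|_{\alpha,\bL}\le1+\bL^\alpha\|\chi\|_{\alpha,\bL}<\infty$, so $\theta\in\bG^{\alpha,\bL}(\R)$. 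Now set (taking $\rho=W$) $g_i(x):=\theta\big(x-a_i+\tfrac W2\big)\,\theta\big(b_i+\tfrac W2-x\big)$. Each factor is $\theta$ composed with an affine map, which leaves the $C^0$ norms of all derivatives unchanged, so each factor lies in $\bG^{\alpha,\bL}(\R)$ with the same norm as $\theta$; by (G\ref{algebra norm}) the product $g_i\in\bG^{\alpha,\bL}(\R)$, and since $\operatorname{supp}g_i\subset[a_i-W,b_i+W]\varsubsetneq\T$ it descends to $\T$. A direct check shows $0\le g_i\le1$, $g_i\equiv1$ on $[a_i,b_i]$, $\operatorname{supp}g_i\subset[a_i-W,b_i+W]$, and — using strict monotonicity of $\theta$ — that $g_i(x)=1$ forces both arguments to be $\ge W/2$, i.e.\ $x\in[a_i,b_i]$. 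Plugging these $g_i$ into the product of the first paragraph completes the proof.

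\textbf{Main difficulty.} The delicate point, and the only place where $\alpha>1$ is really used, is the one‑dimensional estimate: one must simultaneously guarantee that the infinite convolution $\chi$ is $C^\infty$ with Gevrey‑$\alpha$ type derivative bounds \emph{and} that its support $[-W/2,W/2]$ can be shrunk below the prescribed thickness $\rho$ of the collar of $U$, all while keeping the norm $\|\cdot\|_{\alpha,\bL}$ finite for the \emph{given} coefficient $\bL$ (not merely for some smaller one). The tension is that making $W$ small forces the lengths $\ell_m$ to be small, which inflates the reciprocal products $\prod\ell_m^{-1}$ and hence the derivatives of $\chi$; the choice $\ell_m=c_0(m+N)^{-\alpha}$ with $c_0>2\bL^\alpha$ fixed and $N$ large resolves it precisely because $\sum_m(m+N)^{-\alpha}<\infty$ when $\alpha>1$, so the lengths stay summable while $\prod_{m=1}^k\ell_m^{-1}$ grows only like $k!^\alpha$, which is exactly the growth that the Gevrey‑$\bL$ norm can absorb.
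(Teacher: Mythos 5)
Your proof is correct, and its overall architecture coincides with the paper's: reduce to dimension one, build a plateau function as a product of two monotone ``step'' functions, and then tensor over the coordinates using the algebra property (G1). The only genuine difference is the basic one-dimensional Gevrey ingredient. The paper starts from the explicit one-sided function $h(x)=\exp(-\lambda x^{-\sigma})$ with $\alpha=1+1/\sigma$, quoting Lemma A.3 of Marco--Sauzin for the fact that $h\in\bG^{\alpha,\bL}(\R)$ once $\lambda$ is large enough in terms of $\bL$, and then integrates a product of two shifted copies of $h$ to get the step function. You instead build the step function by integrating an infinite convolution $\chi=h_1*h_2*\cdots$ of normalized indicators with lengths $\ell_m=c_0(m+N)^{-\alpha}$, and you prove the Gevrey-$(\alpha,\bL)$ bound from scratch via $\|\partial^k\chi\|_{C^0}\le 2^k\ell_{k+1}^{-1}\prod_{m=1}^k\ell_m^{-1}$ and the estimate $\binom{k+N}{N}\le(k+N)^N/N!$; the choice $c_0=4\bL^\alpha$ correctly makes the norm finite for the \emph{prescribed} $\bL$, and $N$ large makes the support width small, exactly as the paper achieves by enlarging $\lambda$. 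Your route is self-contained (no appeal to the cited lemma) at the cost of more bookkeeping; the paper's is shorter but outsources the key regularity estimate. The two small points you assert without detail --- $C^\infty$ convergence of the partial convolutions and strict positivity of $\chi$ on the interior of its support (needed for the ``only if'' direction of $f(x)=1\Leftrightarrow x\in D$) --- are both standard and true, so they do not constitute gaps.
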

\begin{proof}
We first claim that for $0<d<d'<\frac{1}{2}$, there exists a  function $g\in\bG^{\alpha,\bL}(\T)$ such that $0\leq g\leq 1$ and
$$g(x)=1 \Longleftrightarrow x\in [-d,d],\quad\textup{supp}g\subset [-d',d'].$$
Indeed, let $\alpha=1+\frac{1}{\sigma}$ ($\sigma>0$) and define a non-negative  function $h\in C^\infty(\R)$ as follows: $h(x)=0$ for $x\leq 0$,
$h(x)=$$\exp(-\frac{\lambda}{x^\sigma})$ for $x>0$. Then $h\in\bG^{\alpha,\bL}(\R)$ if the constant $\lambda>(2\bL^\alpha/\sin a)^\sigma/\sigma$ with $a=\frac{\pi}{4}\min\{1,\frac{1}{\sigma}\}$ (cf.  \cite[Lemma A.3]{MS2003}). Next, we define $\psi(x)=\int_{-\infty}^xh\big(t+\frac{d'-d}{2}\big)h\big(-t+\frac{d'-d}{2}\big)~dt.$
It's easy to compute that $\psi\geq 0$ is non-decreasing  and
\begin{equation*}
\psi(x)=\left\{
         \begin{array}{ll}
           0, & x\leq-\frac{d'-d}{2} \\
           K, & x\geq \frac{d'-d}{2}
         \end{array}
       \right.
\end{equation*}
where $$K=\int^{\frac{d'-d}{2}}_{-\frac{d'-d}{2}}h\big(t+\frac{d'-d}{2}\big)h\big(-t+\frac{d'-d}{2}\big)~dt>0.$$
Then we define the function
$$g(x)=\frac{1}{K^2}\psi\big(x+\frac{d'+d}{2}\big)\psi\bigg(-x+\frac{d'+d}{2}\big).$$
Obviously, $0\leq g\leq 1$,  $\textup{supp}g\subset[-d',d']$, and $g(x)=1$ $\Longleftrightarrow$ $x\in[-d,d]$.  It can be viewed as a function defined on $\T$. Hence by property (G\ref{algebra norm}) in Section \ref{introduction}, $g\in\bG^{\alpha,\bL}(\T)$, which proves our claim.

Next, without loss of generality we assume $D=[-d_1,d_1]\times\cdots\times[-d_n,d_n]$ with $0<d_i<\frac{1}{2}$. By assumption, we can find another cube $D'=[-d'_1,d'_1]\times\cdots\times[-d'_n,d'_n]$ such that $D\subset D'\subset U\subset\T^n$. By the claim above, for each $i\in\{1,\cdots,n\}$ there exists a function $f_i\in\bG^{\alpha,\bL}(\T)$ such that $0\leq f_i\leq 1$, $\textup{supp}f_i\subset[-d_i',d_i']$, $f_i(x)=1$ $\Longleftrightarrow$ $x\in[-d_i,d_i]$. Thus we define $$f(x_1,\cdots,x_n):=\prod_{i=1}^nf_i(x_i),$$
which meets our requirements.
\end{proof}

Next, we prove that the inverse  of a Gevrey map is still Gevrey smooth. For each high dimensional map  $\varphi=(\varphi_1,\cdots,\varphi_n): V\to\R^n$ where $\varphi_i\in\bG^{\alpha,\bL}(V)$,  its norm could be defined as follows: $$\|\varphi\|_{\alpha,\bL}:=\sum\limits_{i=1}^n\|\varphi_i\|_{\alpha,\bL}.$$
In what follows,  $(0,1)^n$ denotes the unit domain $(0,1)\times\cdots\times(0,1)$ in $\R^n$. We also refer the reader to \cite{Kom1979} for the inverse function theorem of a general ultra-differentiable mapping.
\begin{The}[Inverse Function Theorem of Gevrey class]\label{inverse thm}
Let $X,Y$ be two open sets in $(0,1)^n$ and  let  $f: X\to Y$ be a Gevrey-$(\alpha,\bL)$ map with $\alpha\geq 1$. If the Jacobian matrix $Jf$ is non-degenerate at $x_0\in X$, then there exist an open set $U$ containing $x_0$, an open set $V$ containing $f(x_0)$, a constant $\bL_1<\bL$, and a unique inverse map $f^{-1}:V\to U$ such that $f^{-1}\in\bG^{\alpha,\bL_1}(V)$.
\end{The}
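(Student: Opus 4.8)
The plan is to reduce the statement to the classical $C^\infty$ inverse function theorem for the existence and uniqueness of $f^{-1}$, and then upgrade the regularity to Gevrey by a quantitative fixed-point argument that tracks the Gevrey norm with a controlled loss of the coefficient $\bL$. First I would apply the ordinary inverse function theorem: since $f$ is $C^\infty$ and $Jf(x_0)$ is invertible, there are open neighborhoods $U_0 \ni x_0$ and $V_0 \ni f(x_0)$ with $f:U_0\to V_0$ a $C^\infty$ diffeomorphism, and $f^{-1}:V_0\to U_0$ is automatically $C^\infty$. So the only real content is: after possibly shrinking $U_0,V_0$, the germ $f^{-1}$ lies in $\bG^{\alpha,\bL_1}(V)$ for some $\bL_1<\bL$ and some neighborhood $V$ of $f(x_0)$.

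The core step is to realize $f^{-1}$ as a composition/fixed point of Gevrey data so that properties (G\ref{algebra norm}), (G\ref{derivative Gevrey}), (G\ref{composition}) apply. Concretely, write $A=Jf(x_0)$ and set $g(x)=A^{-1}(f(x)-f(x_0))$, so $g$ is Gevrey-$(\alpha,\bL)$ (a linear change of variables preserves the class, with an explicit norm bound), $Jg(x_0)=\mathrm{Id}$, and it suffices to invert $g$ near $x_0$. Put $\varphi(x)=x-g(x)$; then $\varphi$ is Gevrey, $\varphi(x_0)=0$ in the shifted coordinates and $J\varphi(x_0)=0$, so by continuity of the (Gevrey-controlled, hence $C^1$) derivatives, on a sufficiently small cube $Q$ around $x_0$ we have $\|J\varphi\|_{C^0(Q)}\le \tfrac12$ and, more importantly, the localized Gevrey norm of $\varphi$ restricted to $Q$ can be made as small as we like — this is where one uses that rescaling the domain to a small cube and recentering multiplies the Gevrey seminorms by small powers of the cube size, together with property (G\ref{derivative Gevrey}) to absorb the first-order Taylor remainder. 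The inverse $g^{-1}$ is the solution of the fixed-point equation $h(y)=y+\varphi(h(y))$, i.e. $h=\lim_{m\to\infty}h_m$ with $h_0(y)=y$ and $h_{m+1}(y)=y+\varphi\circ h_m(y)$. I would show by induction, using (G\ref{algebra norm}) and crucially (G\ref{composition}) with a coefficient $\bL_1<\bL$ chosen so that the smallness hypothesis $\|\varphi\|_{\alpha,\bL}-\|\varphi\|_{C^0}\le \bL_1^\alpha/n^{\alpha-1}$ of (G\ref{composition}) holds, that each $h_m\in\bG^{\alpha,\bL_1}(V)$ with $\|h_m\|_{\alpha,\bL_1}$ bounded uniformly in $m$ and $\|h_{m+1}-h_m\|_{\alpha,\bL_1}\le \kappa^m C$ for some $\kappa<1$ coming from the contraction factor; hence $h_m$ converges in the Banach space $\bG^{\alpha,\bL_1}(V)$ to a limit $h$, which is therefore Gevrey-$(\alpha,\bL_1)$. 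Unwinding the linear change of variables, $f^{-1}(z)=x_0+h(A^{-1}(z-f(x_0)))$, which is again Gevrey-$(\alpha,\bL_1)$ on a neighborhood $V$ of $f(x_0)$ (possibly after one further shrink of $\bL_1$ to accommodate the outer affine map via (G\ref{composition})). Uniqueness of the inverse is inherited from the $C^\infty$ inverse function theorem.

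The main obstacle, and the step requiring the most care, is the bookkeeping in the fixed-point iteration: one must choose the cube $Q$ (equivalently, the neighborhoods $U,V$) and the reduced coefficient $\bL_1$ \emph{simultaneously} so that (i) the composition hypothesis of (G\ref{composition}) is satisfied at every stage of the induction — this forces $\|\varphi\|_{\alpha,\bL}-\|\varphi\|_{C^0}$ to be small relative to $\bL_1^\alpha/n^{\alpha-1}$, which is why we must first shrink the domain to kill the Gevrey seminorm of $\varphi=\mathrm{Id}-g$ — and (ii) the map $h\mapsto (y\mapsto y+\varphi\circ h(y))$ is a genuine contraction on a closed ball of $\bG^{\alpha,\bL_1}(V)$; the contraction estimate needs a Gevrey-norm version of ``$\varphi\circ h - \varphi\circ h' $ is controlled by $\|h-h'\|$'', which one gets by writing $\varphi\circ h-\varphi\circ h'=\int_0^1 (D\varphi)\big(h'+s(h-h')\big)\,ds\cdot(h-h')$ and bounding the integrand with (G\ref{derivative Gevrey}) and (G\ref{composition}) and the factor by (G\ref{algebra norm}). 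Once these quantitative choices are pinned down the convergence is routine; everything else (existence, uniqueness, smoothness) is standard.
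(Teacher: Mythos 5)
Your proposal is correct and follows essentially the same route as the paper: normalize so the Jacobian is the identity, express the inverse as the fixed point of $h\mapsto y+\varphi\circ h$ in a closed ball of the Banach space $\bG^{\alpha,\bL_1}(V)$, verify the hypotheses of (G\ref{composition}) by making the Gevrey norm of the nonlinear part small, and obtain the contraction estimate from the integral form of $D\varphi$ via (G\ref{algebra norm})--(G\ref{composition}). The only cosmetic difference is that you achieve smallness of $\|\varphi\|_{\alpha,\cdot}$ by rescaling the domain to a small cube, whereas the paper keeps the domain and instead lowers the Gevrey coefficient (to $\bL_2=\bL\varepsilon^{1/(2\alpha)}$, then $\bL_1=\varepsilon^{2/(3\alpha)}$) so that the terms with $|k|\ge 2$ pick up factors $\varepsilon^{|k|/2}$ while the $|k|\le 1$ terms are controlled by $\|h\|_{C^1}\le\varepsilon$ — these are the same device.
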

\begin{proof}
For simplicity we suppose the Jacobian matrix $J_{x_0}f=I_n$ where $I_n=\textup{diag}(1,1,\cdots,1)$, otherwise we can replace $f$ by $f\circ(J_{x_0}f)^{-1}$. We also suppose $f(x_0)=x_0$, otherwise we can replace $f$ by $f+x_0-f(x_0)$. If we write $f=id+h$ in a neighborhood of $x_0$, then $h(x_0)=0$, $J_{x_0}h=0$. For $0<\vep\ll 1$ there exist $d>0$ and an open ball $B_{d}(x_0)=$$\{x\in X:\|x-x_0\|<d\}$ such that
\begin{equation}\label{derofh}
\quad \|h\|_{C^1(B_d(x_0))}\leq\vep.
\end{equation}
By classical Inverse Function Theorem, there exist two small open sets $U, V\subset B_{d/2}(x_0)$ containing $x_0$ and a unique $ C^\infty$ inverse map $f^{-1}:V\to U$ where $f^{-1}(x_0)=x_0$. Let $\bL_1=\vep^{\frac{2}{3\alpha}}$, next we will prove  $f^{-1}\in\bG^{\alpha,\bL_1}(V)$ by the contraction mapping principle.

We can write $f^{-1}=id+g$ , so $g\in C^\infty(V)$ and the equality $$g(y)=-h(y+g(y)),~\forall y\in V$$ holds. Define the set
$E=\{\varphi=(\varphi_1,\cdots,\varphi_n): \varphi(x_0)=0,~\varphi\in\bG^{\alpha,\bL_1}(V),~\|\varphi\|_{\alpha,\bL_1}\leq \vep^{\frac{3}{4}}\}$ with the norm $\|\cdot\|_{\alpha,\bL_1}$, it's a non-empty, closed and convex set in the space $\bG^{\alpha,\bL_1}(V)$.
Define the operator $$(T\varphi)(y):=-h(y+\varphi(y)), \forall y\in V.$$

\noindent$\bullet$  We first claim that the mapping $T\varphi\in E,$ $\forall \varphi\in E.$ In fact, for each $\varphi\in E$, $(T\varphi)(x_0)=0$.  For $y\in V\subset B_{d/2}(x_0)$, we have
$\|y+\varphi(y)-x_0\|\leq\|y-x_0\|+\|\varphi(y)-\varphi(x_0)\|\leq\frac{d}{2}+\|J\varphi\|\|y-x_0\|<d$, and hence $(id+\varphi)(V)$$\subset B_d(x_0)$. Moreover, let $\bL_2:=\bL\vep^{\frac{1}{2\alpha}}$ and $\vep$ be suitably small.  For each $i\in\{1,\cdots,n\}$,
\begin{equation*}
 \begin{split}
 \|x_i+\varphi_i\|_{\alpha,\bL_1}-\|x_i+\varphi_i\|_{C^0}=&\sum\limits_{j=1}^n\bL_1^{\alpha}\|\delta_{ij}+\partial_{x_j}\varphi_i\|_{C^0}+\sum\limits_{k\in\N^n,|k|\geq2}\frac{\bL_1^{|k|\alpha}}{(k!)^\alpha}\|\partial^k\varphi_i\|_{C^0}\\
\leq&n\bL_1^\alpha(1+\frac{\vep^\frac{3}{4}}{\bL_1^\alpha})+\|\varphi_i\|_{\alpha,\bL_1}\leq2n\vep^{\frac{2}{3}}+\vep^{\frac{3}{4}}\leq\frac{\bL_2^\alpha}{n^{\alpha-1}},
 \end{split}
\end{equation*}
where $\delta_{ij}=1$ for $i=j$ and $\delta_{ij}=0$ for $i\neq j$. Hence by  property (G\ref{composition}) in Section \ref{introduction}, $\|T\varphi\|_{\alpha,\bL_1}=\|h\circ(id+\varphi)\|_{\alpha,\bL_1}\leq\|h\|_{\alpha,\bL_2,B_d(x_0)}$ since  $(id+\varphi)(V)$$\subset B_d(x_0)$. Now it only remains to verify that
$$\|h\|_{\alpha,\bL_2,B_d(x_0)}\leq\vep^{\frac{3}{4}}.$$
Recall that for $|k|\geq 2$ and $x\in B_d(x_0)$, $\partial^k f_i(x)=\partial^k h_i(x)$. By using \eqref{derofh}, we have
\begin{equation}\label{hi esti}
 \begin{split}
\|h_i\|_{\alpha,\bL_2,B_d(x_0)}&=\|h_i\|_{C^0(B_d(x_0))}+\sum\limits_{k\in\N^n,|k|=1}\bL_2^\alpha\|\partial^kh_i\|_{C^0(B_d(x_0))}+\sum\limits_{k\in\N^n,|k|\geq2}\frac{\bL_2^{|k|\alpha}}{k!^\alpha}\|\partial^kf_i\|_{C^0(B_d(x_0))}\\
&\leq (1+n\bL_2^\alpha)\vep+\sum\limits_{k\in\N^n,|k|\geq2}\frac{\bL^{|k|\alpha}\vep^{\frac{|k|}{2}}}{k!^\alpha}\|\partial^kf_i\|_{C^0(B_d(x_0))}\\
&\leq (1+n\bL^\alpha\vep^\frac{1}{2})\vep+\vep\|f\|_{\alpha,\bL}\leq\frac{\vep^\frac{3}{4}}{n},
\end{split}
\end{equation}
which proves the claim.

\noindent$\bullet$ On the other hand, for $\varphi,\tilde\varphi\in E$ and $i\in\{1,\cdots,n\}$, by the Newton-Leibniz formula  we have
\begin{equation*}
\begin{split}
h_i(x+\varphi(x))-h_i(x+\tilde{\varphi}(x))=&\bigg(\int_0^1Jh_i\big(x+s\varphi(x)+(1-s)\tilde{\varphi}(x)\big)ds\bigg)\bigg(\varphi(x)-\tilde{\varphi}(x)\bigg)\\
=&F(x)\big(\varphi(x)-\tilde{\varphi}(x)\big)
\end{split}
\end{equation*}
where $Jh_i$ is the Jacobian matrix. It follows from property (G\ref{derivative Gevrey}) in Section \ref{introduction} and \eqref{hi esti} that
 $$\|Jh_i\|_{\alpha,\frac{\bL_2}{2},B_d(x_0)}\leq \frac{\|h_i\|_{\alpha,\bL_2,B_d(x_0)}}{(\bL_2-\bL_2/2)^\alpha}\sim O(\vep^\frac{1}{4})<\frac{1}{2n}$$
provided $\vep$ is suitably small. By property (G\ref{composition}), $\|F\|_{{\alpha,\bL_1,V}}\leq \|Jh_i\|_{\alpha,\frac{\bL_2}{2},B_d(x_0)}\leq\frac{1}{2n}$.

Finally, we deduce from  (G\ref{algebra norm}) that
$$\|h_i\circ(id+\varphi)-h_i\circ(id+\tilde{\varphi})\|_{\alpha,\bL_1}\leq\|F\|_{\alpha,\bL_1}\|\varphi-\tilde{\varphi}\|_{\alpha,\bL_1}\leq\frac{1}{2n}\|\varphi-\tilde{\varphi}\|_{\alpha,\bL_1}.$$
Hence $\|h\circ(id+\varphi)-h\circ(id+\tilde{\varphi})\|_{\alpha,\bL_1}\leq\frac{1}{2}\|\varphi-\tilde{\varphi}\|_{\alpha,\bL_1}$, namely $$\|T\varphi-T\tilde\varphi\|_{\alpha,\bL_1}\leq\frac{1}{2}\|\varphi-\tilde\varphi\|_{\alpha,\bL_1}.$$

In conclusion, $T: E\to E$ is a contraction mapping. By the contraction mapping principle, $T$ has a unique fixed point,  and hence the fixed point must be  $g$. Therefore, $f^{-1}=id+g\in\bG^{\alpha,\bL_1}(V)$.
\end{proof}

Sometimes we need to approximate a continuous function by Gevrey smooth ones. Convolution provides us with a systematic technique. More specifically, for any $\alpha>1, \bL>0$, by Lemma \ref{Gevrey bumpfunction} there exists a non-negative function $\eta\in\bG^{\alpha,\bL}(\R^n)$ such that  $\textup{supp}\eta$ $\subset[\frac{1}{4},\frac{3}{4}]^n$
and $\int_{\R^n}\eta(x) dx=1$. Next we set $\eta_\vep(x)=\frac{1}{\vep^n}\eta(\frac{x}{\vep})$ $(0<\vep<1, x\in\R^n)$ which is called the mollifier.  Then we define the convolution of $\eta_\vep$ and $f\in C^0(\T^n)$ by
\begin{equation}\label{mollifier}
 \eta_\vep*f(x)=\int_{\T^n}\eta_\vep(x-y)f(y)dy,~\forall~x\in\T^n.
\end{equation}

\begin{The}[Gevrey approximation]
\begin{enumerate}[\rm(1)]
\item Let $\alpha>1$, and $U\subset\T^n$, $V\subsetneq(0,1)^n$ be two open sets. If $f:U\to V$ is a continuous map, then there exists a sequence of maps $f^\vep:U\to(0,1)^n$ such that $f^\vep\in\bG^{\alpha,\bL_\vep}(U)$. Furthermore, $\bL_\vep\to 0$ and $\|f^\vep-f\|_{C^0}\to 0$ as $\vep$ tends to 0.
\item Let $\alpha>1$, $U, V$ be connected open sets satisfying $\bar{U},\bar{V}\varsubsetneq\T^n$ and $f: U\to V$ be a continuous map. Then there exists a sequence of  maps $f^\vep:U\to \T^n$ such that $f^\vep\in\bG^{\alpha,\bL_\vep}(U)$,  $\bL_\vep\to 0$ and $\|f^\vep-f\|_{C^0}\to 0$ as $\vep$ tends to 0. Specifically, if $f$ is a diffeomorphism and the determinant $\det(Jf)$ ($Jf$ is the Jacobian matrix) has a uniform positive distance away from zero, then the Gevrey map $f^\vep:U\to V^\vep$ with $V^\vep=f^\vep(U)$ will also be a diffeomorphism provided that $\vep$ is small enough.
\end{enumerate}\label{Gevrey approx}
\end{The}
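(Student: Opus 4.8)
The plan is to obtain $f^\varepsilon$ by mollifying $f$ with the Gevrey kernel $\eta_\varepsilon$ introduced just above in \eqref{mollifier}, after first extending $f$ slightly, and then to read off how fast the Gevrey radius must shrink with $\varepsilon$. First I would dispose of the extension. Assuming, as is the case in the applications, that $f$ extends to a continuous map on $\bar U$ (equivalently, is uniformly continuous; otherwise one reads the $C^0$-convergence locally uniformly): in case (1) the target $(0,1)^n$ is convex, so by the Tietze/Dugundji extension theorem $f$ extends to a continuous $\tilde f:\T^n\to\R^n$ with $\tilde f(\T^n)$ contained in the convex hull of a compact subset of $(0,1)^n$, hence in a compact subcube of the closed cube; in case (2), since $\T^n$ is a smooth manifold, $f$ extends to a continuous map $\tilde f$ on a neighbourhood $U'\supset\bar U$, and for $\varepsilon$ small the convolution $\eta_\varepsilon*\tilde f$ only sees the values of $\tilde f$ on $\varepsilon$-balls, which lie in arbitrarily small geodesic charts of $\T^n$, so it is a well-defined $\T^n$-valued smooth map. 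I then set $f^\varepsilon:=\eta_\varepsilon*\tilde f$ (in case (1), composed afterwards with a contraction $x\mapsto c+(1-\delta_\varepsilon)(x-c)$, $\delta_\varepsilon\to0$, toward the centre $c=(\tfrac12,\dots,\tfrac12)$ to force the image into the \emph{open} cube).

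The core of the proof is the Gevrey estimate. Differentiating under the integral sign, $\partial^k f^\varepsilon=(\partial^k\eta_\varepsilon)*\tilde f$, so $\|\partial^k f^\varepsilon\|_{C^0}\le\|\partial^k\eta_\varepsilon\|_{L^1}\|\tilde f\|_{C^0}$. From $\partial^k\eta_\varepsilon(x)=\varepsilon^{-n-|k|}(\partial^k\eta)(x/\varepsilon)$ and $\textup{supp}\,\eta\subset[\tfrac14,\tfrac34]^n$ (Lebesgue measure $\le1$) one gets $\|\partial^k\eta_\varepsilon\|_{L^1}=\varepsilon^{-|k|}\|\partial^k\eta\|_{L^1}\le\varepsilon^{-|k|}\|\partial^k\eta\|_{C^0}$, and from $\eta\in\bG^{\alpha,\bL}(\R^n)$ one has $\|\partial^k\eta\|_{C^0}\le\|\eta\|_{\alpha,\bL}\,\bL^{-\alpha|k|}k!^{\alpha}$. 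Combining,
\[
\|f^\varepsilon\|_{\alpha,\bL_\varepsilon}=\sum_{k\in\N^{n}}\frac{\bL_\varepsilon^{\alpha|k|}}{k!^{\alpha}}\|\partial^k f^\varepsilon\|_{C^0}\le\|\eta\|_{\alpha,\bL}\,\|\tilde f\|_{C^0}\sum_{k\in\N^{n}}\Big(\frac{\bL_\varepsilon^{\alpha}}{\bL^{\alpha}\varepsilon}\Big)^{|k|},
\]
which is finite whenever $\bL_\varepsilon^{\alpha}<\bL^{\alpha}\varepsilon$. Hence taking $\bL_\varepsilon:=\tfrac12\bL\,\varepsilon^{1/\alpha}$ gives $f^\varepsilon\in\bG^{\alpha,\bL_\varepsilon}(U)$ with a bound uniform in $\varepsilon$, and $\bL_\varepsilon\to0$ as $\varepsilon\to0$. (The affine contraction in case (1) only rescales the derivatives of order $\ge1$ by a factor $<1$, so it does not affect this.)

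It remains to check the qualitative claims. The $C^0$-convergence is the standard mollification estimate $\|\eta_\varepsilon*\tilde f-\tilde f\|_{C^0}\le\omega_{\tilde f}(\varepsilon)\to0$ ($\omega_{\tilde f}$ the modulus of continuity, using $\eta_\varepsilon\ge0$, $\int\eta_\varepsilon=1$, and $\textup{supp}\,\eta_\varepsilon$ of diameter $O(\varepsilon)$), combined with $\delta_\varepsilon\to0$ in case (1); each $f^\varepsilon(x)$ is a convex average of values of $\tilde f$, so it lands where required. In case (2), if $f$ is a diffeomorphism of $U$ onto $V$ with $\det Jf$ bounded away from $0$, then in addition $\partial_x f^\varepsilon=\eta_\varepsilon*\partial_x f\to\partial_x f$ uniformly, so $\det Jf^\varepsilon$ stays bounded away from $0$ for $\varepsilon$ small and each $f^\varepsilon$ is a local diffeomorphism; global injectivity for small $\varepsilon$ is the usual fact that a $C^1$-small perturbation of a diffeomorphism on a compact domain is again a diffeomorphism (for instance $f^\varepsilon\circ f^{-1}$ is $C^1$-close to the identity on $\bar V$), so $f^\varepsilon:U\to V^\varepsilon:=f^\varepsilon(U)$ is a diffeomorphism.

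The main obstacle is the estimate in the second paragraph: because the norm $\|\cdot\|_{\alpha,\bL_\varepsilon}$ weights \emph{all} derivatives with Gevrey factors, finiteness forces the quantitative balance $\bL_\varepsilon\sim\varepsilon^{1/\alpha}$ — mollification acts at scale $\varepsilon$, so only a Gevrey radius of order $\varepsilon^{1/\alpha}$ survives — and this is precisely why $\bL$ cannot be prescribed in advance, i.e. the source of the \emph{a posteriori} constant $\bL_0$ in Theorems \ref{main theorem} and \ref{main thm2}. The remaining points are bookkeeping: arranging the extension of $f$ without leaving the prescribed target (handled by convexity in (1) and by working in local charts in (2)), keeping the image of $f^\varepsilon$ in the open cube in (1), and, in the diffeomorphism case, upgrading $C^0$- to $C^1$-control and quoting the standard perturbation argument.
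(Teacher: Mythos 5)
Your proposal is correct and follows essentially the same route as the paper: mollify with the Gevrey kernel $\eta_\varepsilon$, bound $\|\partial^k f^\varepsilon\|_{C^0}$ via the scaling $\|\partial^k\eta_\varepsilon\|\sim\varepsilon^{-|k|}$ to force $\bL_\varepsilon\sim\varepsilon^{1/\alpha}$, get $C^0$- (resp.\ $C^1$-) convergence from standard mollification, and conclude the diffeomorphism claim from the non-degeneracy of $\det(Jf)$ together with the inverse function theorem. Your extra care about extending $f$ beyond $U$, using the $L^1$ norm in Young's inequality (which removes the paper's $\varepsilon^{-n}$ factor), and contracting the image into the open cube are minor refinements of the same argument, not a different method.
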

\begin{proof}
(1): Let $f=(f_1,\cdots,f_n)$ and $f_i$ ($1\leq i\leq n$) be continuous, we only need to prove that each $f_i$ can be approximated by a Gevrey smooth function. Indeed, let $f_i^\vep=\eta_\vep*f_i$ ($0<\vep<1$), where $\eta\in\bG^{\alpha,\bL}$. It's easy to check that $f_i^\vep:U\to(0,1)$ since $\int_{\R^n}\eta_\vep(x) dx=1$ and $\textup{supp}\eta_\vep$$\subset[\frac{\vep}{4},\frac{3\vep}{4}]^n$. By the classical properties of convolutions, one obtains $f^\vep_i\in C^\infty$ and
\begin{equation*}
\|f_i^\vep-f_i\|_{C^0}\to 0, \textup{~as~} \vep\to 0.
\end{equation*}
\begin{equation*}
  \partial^kf^\vep_i=\partial^k\eta_\vep*f_i=\int_{\T^n}\partial^k\eta_\vep(x-y)f_i(y)dy,~\forall~k=(k_1,\cdots,k_n)\in\Z^n, k_i\geq 0.
\end{equation*}
It only remains to prove $f_i^\vep$ is Gevrey smooth. In fact, if one sets $\bL_\vep=\bL\vep^{\frac{1}{\alpha}}$, then
\begin{equation*}
\begin{split}
 \|f^\vep_i\|_{\alpha,\bL_\vep}
 &\leq\sum\limits_{k}\frac{\bL_\vep^{|k|\alpha}}{k!^\alpha}\|\partial^k\eta_\vep\|_{C^0}\|f_i\|_{C^0}\\
&\leq \frac{\|f_i\|_{C^0}}{\vep^n}\sum\limits_{k}\frac{\bL_\vep^{|k|\alpha}\vep^{-|k|}}{k!^\alpha}\|\partial^k\eta\|_{C^0}\\
&=\frac{\|f_i\|_{C^0}}{\vep^n}\sum\limits_{k}\frac{\bL^{|k|\alpha}}{k!^\alpha}\|\partial^k\eta\|_{C^0}=\frac{\|f_i\|_{C^0}}{\vep^n}\|\eta\|_{\alpha,\bL}.
\end{split}
\end{equation*}
Obviously, $\bL_\vep\to 0$ as $\vep\to 0$. This completes the proof of (1).

(2): The first part is not hard to prove by the technique in (1). Furthermore, if $f$ is a diffeomorphism from $U$ to $V$, then by using $\partial^kf^\vep=\eta_\vep*\partial^kf$ with $|k|=1$, one gets
\begin{equation}\label{gevapp}
\|f^\vep-f\|_{C^1}\to 0,\quad \vep\to 0.
\end{equation}
Since $\det(Jf)$ has a uniform positive distance away from zero, it concludes from  \eqref{gevapp} and Theorem \ref{inverse thm} that $f^\vep: U\to f^\vep(U)$ would also be a diffeomorphism for $\vep$ small enough. 
\end{proof}

\section{Proof of  the main results}\label{sec proof main}
This section is the main part of the present paper, which aims to prove  Theorem \ref{main theorem} and Theorem \ref{main thm2}. We will explain how to apply the tools exposed in the previous sections to \emph{a priori} unstable and Gevrey smooth systems. Before that, we need to do some preparations.

\subsection{Genericity of uniquely minimal measure in Gevrey or analytic topology} Let $M=\T^n$.
We fix an $h\in H_1(M,\R)$, it is well known that in the $C^r$ ($ 2\leq r \leq\infty$) topology, a generic Lagrangian has only one  minimal measure $\mu$ with the rotation vector $\rho(\mu)=h$ (see  \cite{Mane1996}).   Next, we will show that such a property  still holds in the Gevrey topology.
For this purpose, we shall consider it in a Gevrey space $\bG^{\alpha,\bL}(M\times\T)$ with $\alpha\geq 1, \bL>0$.  A property is called \emph{generic} in the sense of Ma\~n\'e if, for each  Lagrangian $L: TM\times\T\to\R$, there exists a residual\footnote{A residual subset $X$ of a Baire space is one whose complement is the union of countably many nowhere dense subsets. The residual set is a dense set.} subset $\mathcal{O}\subset\bG^{\alpha,\bL}(M\times\T)$ such that the property holds for each Lagrangian $L+\phi$ with $\phi\in\mathcal{O}$.

\begin{The}\label{generic G1}
Let $h\in H_1(M,\R)$, $\alpha\geq 1, \bL>0$ and $L:TM\times\T\to\R$ be a Tonelli Lagrangian, then there exists a residual subset $\mathcal{O}(h)\subset\bG^{\alpha,\bL}(M\times\T)$ such that, for each $ \phi\in\mathcal{O}(h)$, the Lagrangian $L+\phi$ has only one minimal measure with  the rotation vector $h$.
\end{The}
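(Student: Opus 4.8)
The plan is to follow Mañé's genericity scheme for uniquely minimizing measures (\cite{Mane1996}) and to check that each ingredient survives the passage to the Gevrey category. For a potential $\phi\in\bG^{\alpha,\bL}(M\times\T)$ the Lagrangian $L_\phi:=L+\phi$ is again Tonelli, with $\alpha$- and $\beta$-functions $\alpha_\phi,\beta_\phi$; write
$$\mathcal M_h(\phi):=\Big\{\mu\in\mathcal H:\ \rho(\mu)=h,\ \int L_\phi\,d\mu=\beta_\phi(h)\Big\},$$
which by Proposition \ref{Mather and Mane} equals the set $\fM_h$ of minimal measures of $L_\phi$ with rotation vector $h$; it is nonempty, convex and weak-$*$ compact. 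Fix once and for all a countable family $\{f_k\}_{k\ge1}\subset\bG^{\alpha,\bL}(M\times\T)$ which is dense in $C^0(M\times\T)$ (such a family exists because $\bG^{\alpha,\bL}(M\times\T)$ is dense in the separable space $C^0(M\times\T)$; when $M=\T^n$ one may simply take trigonometric polynomials with rational coefficients). For each $k$ set
$$g_k^+(\phi):=\max_{\mu\in\mathcal M_h(\phi)}\int f_k\,d(\pi_*\mu),\qquad g_k^-(\phi):=\min_{\mu\in\mathcal M_h(\phi)}\int f_k\,d(\pi_*\mu),$$
and $g_k:=g_k^+-g_k^-\ge0$, which is finite since $f_k$ is bounded on the compact $M\times\T$. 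The residual set will be $\mathcal O:=\bigcap_{k\ge1}\{\phi:\ g_k(\phi)=0\}$.

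First I would establish that $\phi\mapsto\mathcal M_h(\phi)$ is upper semicontinuous for the weak-$*$ topology: if $\phi_n\to\phi$ in $\bG^{\alpha,\bL}$ (hence in $C^0$) and $\mu_n\in\mathcal M_h(\phi_n)$ converges weak-$*$ to $\mu$, then $\rho(\mu)=\lim\rho(\mu_n)=h$ by continuity of $\rho$ on $\mathcal H$, while $\int L_\phi\,d\mu=\lim\int L_{\phi_n}\,d\mu_n=\lim\beta_{\phi_n}(h)=\beta_\phi(h)$, using the uniform bound $\int\|v\|\,d\mu_n\le\mathrm{const}$ valid for minimal measures with a fixed rotation vector together with the Lipschitz estimate $|\beta_\phi(h)-\beta_\psi(h)|\le\|\phi-\psi\|_{C^0}$. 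Consequently $g_k^+$ is u.s.c. and $g_k^-$ is l.s.c., so $g_k$ is a bounded u.s.c. function; such a function is of Baire class one, hence by Baire's theorem its set of continuity points is a dense $G_\delta$ in the Banach, hence Baire, space $\bG^{\alpha,\bL}(M\times\T)$.

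Next I would show $g_k$ vanishes at each of its continuity points. Fix $\phi$ and consider $B(t):=\beta_{\phi+tf_k}(h)=\min\{\int L_\phi\,d\mu+t\int f_k\,d(\pi_*\mu):\mu\in\mathcal H,\ \rho(\mu)=h\}$. As an infimum of affine functions of $t\in\R$, $B$ is concave and finite, hence differentiable off a countable set $N$; for each $t$ the minimizers are exactly $\mathcal M_h(\phi+tf_k)$, and a one-sided comparison of $B$ with the affine function attached to a minimizer gives $B'_-(t)\ge g_k^+(\phi+tf_k)\ge g_k^-(\phi+tf_k)\ge B'_+(t)$. Hence $g_k(\phi+tf_k)=0$ for all $t\notin N$, in particular along some sequence $t_n\to0$; if $\phi$ is a continuity point of $g_k$ then $g_k(\phi)=\lim_n g_k(\phi+t_nf_k)=0$. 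Thus $\{g_k=0\}$ contains a dense $G_\delta$, so it is residual, and therefore so is $\mathcal O=\bigcap_k\{g_k=0\}$.

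Finally, for $\phi\in\mathcal O$ I would deduce uniqueness through the graph property. If $\mu_1,\mu_2\in\mathcal M_h(\phi)$ then $\int f_k\,d(\pi_*\mu_1)=\int f_k\,d(\pi_*\mu_2)$ for every $k$, so density of $\{f_k\}$ in $C^0(M\times\T)$ forces $\pi_*\mu_1=\pi_*\mu_2$. Pick $c\in\partial\beta_\phi(h)$ (nonempty as $\beta_\phi$ is finite and convex); then $\beta_\phi(h)=\langle c,h\rangle-\alpha_\phi(c)$, whence $\int(L_\phi-\eta_c)\,d\mu=-\alpha_\phi(c)$ for every $\mu\in\mathcal M_h(\phi)$, i.e.\ each such $\mu$ is $c$-minimal and supported in $\tilde{\cM}(c)\subset\tilde{\cA}(c)$. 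By the graph property (Proposition \ref{upper semi}(1)) $\pi|_{\tilde{\cA}(c)}$ is a homeomorphism onto $\cA(c)$, so $\mu_i=(\pi|_{\tilde{\cA}(c)})^{-1}_*(\pi_*\mu_i)$ and therefore $\mu_1=\mu_2$, completing the proof. The step I expect to carry the real content is this last reduction, from equality of base projections to equality of the measures themselves: it rests on Mather's graph theorem and is exactly what lets a finite-dimensional family of \emph{potential} perturbations suffice. Everything else is a routine verification that $\bG^{\alpha,\bL}(M\times\T)$ is a Baire space and is $C^0$-dense, which is precisely why the classical $C^r$ argument transfers verbatim (and, with the obvious change in the density step, would also cover the analytic case).
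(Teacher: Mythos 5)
Your proof is correct. It follows the same underlying scheme as the paper's — Ma\~n\'e's genericity argument for uniquely minimizing measures — but where the paper simply sets up the data $(E,F,\mathcal L,\varphi,K)$ and cites Ma\~n\'e's abstract Proposition 3.1 as a black box, you unfold that proposition into an explicit Baire-category argument (semicontinuity of $g_k^{\pm}$, Baire-class-one continuity points, concavity and a.e.\ differentiability of $t\mapsto\beta_{\phi+tf_k}(h)$). More importantly, you make explicit a step the paper's phrase ``it's easy to verify that our setting satisfies all conditions'' leaves implicit: since the perturbations are potentials on $M\times\T$, the functionals $\varphi(\phi)$ cannot separate two measures on $TM\times\T$ with the same projection, so the category argument by itself only yields uniqueness of $\pi_*\mu$; passing from that to uniqueness of $\mu$ requires choosing $c\in\partial\beta_\phi(h)$, noting $\fM_h\subset\fM^c$ so the minimizers live in $\tilde{\cM}(c)\subset\tilde{\cA}(c)$, and invoking Mather's graph theorem. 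You correctly identify this as the step carrying the real content. The only cosmetic imprecision is the equality $\int L_\phi\,d\mu=\lim\int L_{\phi_n}\,d\mu_n$ in the upper semicontinuity step, where only ``$\leq$'' follows from lower semicontinuity of the action and the reverse inequality comes from $\rho(\mu)=h$ and minimality; the conclusion is unaffected.
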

\begin{Rem}
We shall note that the residual set $\mathcal{O}(h)$ depends on the homology class $h$.	
\end{Rem}
\begin{proof}
Recall Ma\~n\'e's equivalent definition of minimal measure in Section \ref{Mathertheory},  we are going to prove this theorem in the following setting based on Ma\~{n}\'{e}'s approach.
\begin{enumerate}[(a)]
  \item  Set $E:=\bG^{\alpha,\bL}(M\times\T).$ Obviously, it is a Banach space.
  \item Denote by $F\subset \mathrm{C}^*$ the vector space spanned by the set of probability measures $\mu\in\mathcal{H}$ with $\int_{TM\times\T}L\,d\mu<\infty$, the definitions of the sets $\mathcal{H}$ and $\mathrm{C}^*$ are in Section \ref{Mathertheory}. Recall that for $\mu_k, \mu\in F$,
      $$\lim\limits_{k\to+\infty}\,\mu_k=\mu \Longleftrightarrow \lim\limits_{k\to+\infty}\int_{TM\times\T}f\, d\mu_k=\int_{TM\times\T}f d\mu,\quad\forall f\in \mathrm{C}.$$
  \item Let $\mathcal{L}: F\to \R$ be a linear map satisfying $\mathcal{L}(\mu)=\int L\, d\mu$, for every $\mu\in F$.
  \item Let $\varphi: E\to F^*$ be a linear map such that for each $\phi\in E$, $\varphi(\phi)\in F^*$ is defined as follows
    $$\langle\varphi(\phi),\mu\rangle:=\int \phi\, d\mu,~ \mu\in F.$$
  \item $K:=\{\mu\in F~|~\rho(\mu)=h\}$. It's easy to check that $K$ is a separable metrizable convex subset.

\end{enumerate}

For  $\phi\in E$, we denote
$$\arg\min(\phi):=\{~\mu\in K~|~\mathcal{L}(\mu)+\langle \varphi(\phi),\mu \rangle=\min\limits_{\nu\in K}(\mathcal{L}(\nu)+\langle \varphi(\phi),\nu \rangle )~\}.$$
It's easy to verify that our setting above satisfies all conditions of that in \cite[Proposition 3.1]{Mane1996}, so there exists a residual subset $\mathcal{O}(h)\subset E$ such that each $\phi\in\mathcal{O}(h)$ has the following property: $$\#\arg\min(\phi)=1.$$ Since $\arg\min(\phi)$ $=$ $\mathfrak{H}_h(L+\phi)$, see \eqref{definition of mane}, it follows from Proposition \ref{Mather and Mane} that the Lagrangian $L+\phi$  admits only one  minimal measure with the rotation vector $h$.
\end{proof}
\begin{Rem}
For $\alpha=1$, $\bG^{1,\bL}$ is the space of real analytic functions. This therefore means that the uniqueness of minimal measure is also a generic property in the analytic topology.
\end{Rem}

\begin{Cor}\label{corgeneric G1}
 Let $\bL>0$, $\alpha\geq 1$ and $L: T\T^n\times\T$ be a Tonelli Lagrangian. Then there exists a residual set $\mathcal{O}_1\subset\bG^{\alpha,\bL}(\T^n\times\T)$ such that for any $V\in\mathcal{O}_1$,  the Lagrangian $L+V$ has the following property: for each rational $h=(h_1,\cdots,h_n)\in H_1(\T^n,\R)$ with $h_i\in\Q$,  $L+V$ has one and only one  minimal measure with  the rotation vector $h$.
\end{Cor}
\begin{proof}
For each $h\in H_1(\T^n,\R)$, thanks to Theorem \ref{generic G1}, we obtain a residual subset $\mathcal{O}(h)\subset\bG^{\alpha,\bL}(\T^n\times\T)$ such that for each $ \phi\in\mathcal{O}(h)$, the Lagrangian $L+\phi$ has only one minimal measure with  the rotation vector $h$. Then we set 
\[\mathcal{O}_1=\bigcap_{h\in \Q^n} \mathcal{O}(h),\]
which is the intersection of countably many residual sets. Of course, by the definition of residual set, 
 $\mathcal{O}_1$ is non-empty and still residual (also dense) in the Banach space $\bG^{\alpha,\bL}(\T^n\times\T)$. The corollary is now evident from what we have proved.
\end{proof}

\subsection{H\"older regularity of elementary weak KAM solutions}\label{sub holder}
In this part, we will choose a family of elementary weak KAM solutions
which can be parameterized so that they are H\"older continuous in the $C^0$ topology. Such a property is crucial for our proof of Theorem \ref{generic G2}.

To this end, we need to do study the normally hyperbolic cylinders (refer to Appendix \ref{appendix_NHIC}). Let us go back to our two and a half degrees of freedom Hamiltonian model \eqref{hamiltonian}.  Let 
$$\Sigma(0):=\{(q_1,\bfo,p_1,\bfo): q_1\in\T, |p_1|\leq R\}\subset\T^2\times\R^2.$$
It is a cylinder restricted on the time-$0$ section, where $R$ is the constant fixed in Section \ref{introduction}. By condition {\bf(H2)}, $\Sigma(0)$ is a normally hyperbolic invariant cylinder (NHIC) for the time-1 map of the Hamiltonian flow $\Phi_{H_0}^t$. Since the  Hamiltonian $H_0$ is integrable when restricted in the cylinder $\Sigma(0)$, the rate $\mu$ in \eqref{hyp splitting} is 1 and $\log\mu=0$, so it follows from Theorem \ref{persistence} that there exists
\begin{equation}\label{diyigeepsilon}
\vep_1=\vep_1(H_0,R)>0
\end{equation}
such that if  $\|H_1\|_{C^3(\sD_R)}\leq\vep_1$, the time-1 map $\Phi_H^1$ of the Hamiltonian $H$ still admits a $C^{r-1}$ normally hyperbolic invariant cylinder $\Sigma_H(0)$, which is a small deformation of $\Sigma(0)$ and can be considered as the image of the following diffeomorphism (see figure \ref{crumpled cylin})
\begin{equation}\label{graph of cylinder}
  \begin{split}
    \psi:\Sigma(0)&\to \Sigma_H(0)\subset \T^2\times\R^2, \\
    (q_1,\bfo,p_1,\bfo)&\mapsto(q_1,\bq_2(q_1,p_1),p_1, \bp_2(q_1,p_1)).
   \end{split}
\end{equation}
Here, $\bq_2$ and $\bp_2$ are two $C^{r-1}$ functions taking values close to zero.
Then $\psi$ induces a 2-form $\psi^*\Omega$ on the standard cylinder $\Sigma(0)$ where $\Omega=\sum_{i=1}^2 dq_i\wedge dp_i$,
$$\psi^*\Omega=\bigg(1+\frac{\partial(\bq_2,\bp_2)}{\partial(q_1,p_1)}\bigg)dq_1\wedge dp_1.$$
Since the second de Rham cohomology group $H^2(\Sigma(0) ,\R)=\{0\}$, by using Moser's trick on the isotopy of symplectic forms, one can find a diffeomorphism $\psi_1:\Sigma(0)\to\Sigma(0)$ such that
\begin{equation*}
  \psi_1^*\psi^*\Omega=dq_1\wedge dp_1.
\end{equation*}
\begin{figure}[H]
  \centering
  \includegraphics[height=2.4cm]{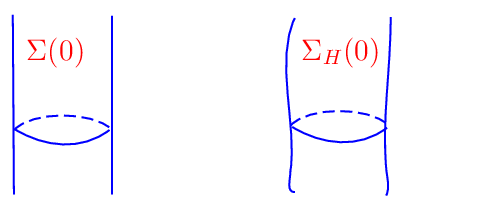}
  \caption{$\Sigma_H(0)$ is a small deformation of $\Sigma(0)$ }\label{crumpled cylin}
\end{figure}
Recall that $\Sigma_H(0)$ is invariant under $\Phi_H^1$ and $(\Phi_H^1)^*\Omega=\Omega$, one obtains
\begin{equation*}
  \big( (\psi\circ\psi_1)^{-1}\circ\Phi_H^1\circ (\psi\circ\psi_1) \big)^*dq_1\wedge dp_1=dq_1\wedge dp_1.
\end{equation*}
Combining with the fact that $(\psi\circ\psi_1)^{-1}\circ\Phi_H^1\circ (\psi\circ\psi_1)$ is a small perturbation of $\Phi_{H_0}^1$, the map $(\psi\circ\psi_1)^{-1}\circ\Phi_H^1\circ (\psi\circ\psi_1)$ is an exact twist map, and hence one can apply the classical Aubry-Mather theory to characterize the minimal orbits on $\Sigma(0)$: given a $\rho\in\R$, there exists an Aubry-Mather set with rotation number $\rho$ satisfying
\begin{enumerate}
  \item if $\rho\in\Q$, the set consists of  periodic orbits.
  \item if  $\rho\in\R\setminus \Q$, the set is either an invariant circle or a Denjoy set.
\end{enumerate}

For simplicity, we denote by
$$\Sigma_H(s)=\Phi^s_H(\Sigma_H(0),0), \quad \Sigma(s)=\Phi^s_{H_0}(\Sigma(0),0)$$
the 2-dimensional manifolds, and denote by
\begin{equation}\label{all_t}
	\widetilde{\Sigma}_H=\bigcup_{s\in\T}\Sigma_H(s),\quad\widetilde{\Sigma}=\bigcup_{s\in\T}\Sigma(s)
\end{equation}
the 3-dimensional manifolds in  $T^*\T^2\times\T$.  By using the Legendre transformation $\sL$ (see \eqref{Legendre_tran}),  the set $\sL\widetilde{\Sigma}_H$ is $\phi_L^t$-invariant  in $T\T^2\times\T$. Given a cohomology class $c=(c_1,\bfo)\in H^1(\T^2,\R)$ with $|c_1|\leq R-1$,  the following lemma shows that the Aubry set $\widetilde{\cA}(c)$ lies inside the cylinder $\sL\widetilde{\Sigma}_H$.

\begin{Lem}[Location of the minimal sets]\label{minimal set on cylinder}
Let $H$ be the  Hamiltonian \eqref{hamiltonian} and $L$ be the associated Lagrangian \eqref{lagrangian}. There exists $\vep_1=\vep_1(H_0,R)>0$ such that if $\|H_1\|_{C^3(\sD_R)}\leq \vep_1$ , then for each $c=(c_1, 0)$ with $|c_1|\leq R-1$, the globally minimal set $\widetilde{\cG}_L(c)\subset\sL\widetilde{\Sigma}_H$.
\end{Lem}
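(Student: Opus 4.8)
The plan is to reduce the claim for the perturbed Hamiltonian $H=H_0+H_1$ to the essentially trivial case $H_1=0$, and then to turn ``contained in a neighborhood of the cylinder'' into ``contained in the cylinder'' by invoking normal hyperbolicity. The two tools are the upper semi-continuity of the set-valued map $(c,L)\mapsto\tilde\cG(c,L)$ (Proposition~\ref{upper semi}(2)) and the persistence and normal hyperbolicity of $\Sigma(0)$ recorded near~\eqref{diyigeepsilon}; the hypothesis $|c_1|\le R-1$, together with the cylinder being taken over the wider range $|p_1|\le R+1$, serves precisely to keep all the minimal orbits strictly inside the cylinder, so that none of them can leave through its edge.

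\textbf{The unperturbed globally minimal set.} Since $L_0=l_1(v_1)+l_2(\htq,\htv)$ and $c=(c_1,\bfo)$, the action $A_c$ splits as a sum of an action for $l_1-c_1v_1$ on $\T$ and an action for $l_2$ on $\T^{n-1}$, whose two minimizations are independent; hence a curve $\gamma=(\gamma_1,\hat\gamma)$ is globally $c$-minimal for $L_0$ exactly when $\gamma_1$ is globally $c_1$-minimal for $l_1$ and $\hat\gamma$ is globally $\bfo$-minimal for $l_2$. For the one-dimensional factor, the Euler-Lagrange flow of $l_1$ keeps $v_1$ constant, and comparing such an orbit over a long time interval with constant-velocity competitors winding differently around $\T$ shows that the only globally $c_1$-minimal orbits are those on the invariant circle $\{v_1=h_1'(c_1)\}$. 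For the hyperbolic factor, \eqref{weiyimin} says that $(\bfo,\bfo)$ is the unique minimum of $l_2$, so $\tilde\cA_{l_2}(\bfo)=\{(\bfo,\bfo)\}$ is the unique Aubry class; therefore the $\alpha$- and $\omega$-limit sets of any globally $\bfo$-minimal orbit of $l_2$ reduce to $\{(\bfo,\bfo)\}$, i.e.\ such an orbit is homoclinic to $(\bfo,\bfo)$. A nontrivial homoclinic $\hat\gamma$, however, cannot be globally minimal: its normalized action $\int_t^{t'}\bigl(l_2-l_2(\bfo,\bfo)\bigr)(d\hat\gamma(s))\,ds$ over a long interval with $\hat\gamma(t),\hat\gamma(t')$ within $\delta$ of $(\bfo,\bfo)$ tends to the fixed positive quantity $\int_{\R}\bigl(l_2-l_2(\bfo,\bfo)\bigr)(d\hat\gamma(s))\,ds>0$, whereas those two endpoints can also be joined in the \emph{same} time by a curve that moves to within $O(\delta)$ of $(\bfo,\bfo)$ and then sits essentially still there, whose normalized action is $O((t'-t)\delta^2)$; since $\delta$ decays exponentially in $t'-t$ along a homoclinic, this is $o(1)$, so for long intervals the homoclinic segment is not minimal. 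Thus $\tilde\cG_{l_2}(\bfo)=\{(\bfo,\bfo)\}$, and $\tilde\cG_{L_0}((c_1,\bfo))$ is the $\sL$-image of the circle $\{(q_1,\bfo,c_1,\bfo):q_1\in\T\}\subset\Sigma(0)$ together with all time values, which for $|c_1|\le R-1$ lies in the interior of the cylinder.

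\textbf{Passing to the perturbation and normal hyperbolicity.} Fix a small open neighborhood $\mathcal U$ of $\bigcup_{|c_1|\le R-1}\tilde\cG_{L_0}((c_1,\bfo))$ --- a compact subset of $\sL\tilde\Sigma$ lying over $|p_1|\le R-1$ --- chosen small enough to stay away from $\{|p_1|=R+1\}$ and small enough for the uniqueness statement below. Because that compact set is contained in $\mathcal U$ and $\{(c_1,\bfo):|c_1|\le R-1\}$ is compact, the upper semi-continuity of $(c,L)\mapsto\tilde\cG(c,L)$, used jointly in $(c,L)$, yields $\varepsilon_1=\varepsilon_1(H_0,R)>0$ (which we also take below the bound in~\eqref{diyigeepsilon}) such that $\|H_1\|_{C^3(\sD_R)}\le\varepsilon_1$ forces $\tilde\cG_L((c_1,\bfo))\subset\mathcal U$ for all $|c_1|\le R-1$ at once. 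For such $H_1$, Theorem~\ref{persistence} produces the $C^{r-1}$ normally hyperbolic invariant cylinder $\Sigma_H(0)$, an $O(\varepsilon_1)$-deformation of $\Sigma(0)$, so $\mathcal U$ is still a neighborhood of the relevant part of $\sL\tilde\Sigma_H$; transporting by the Legendre diffeomorphism $\sL$, which conjugates $\Phi_H^t$ with $\phi_L^t$, the set $\sL\tilde\Sigma_H$ is normally hyperbolic for the Euler-Lagrange flow $\phi_L^t$, with local stable and unstable sets intersecting exactly along it. Hence, for $\mathcal U$ small enough, any complete orbit of $\phi_L^t$ that stays in $\mathcal U$ for all time lies on $\sL\tilde\Sigma_H$: it belongs to $W^s_{\mathrm{loc}}\cap W^u_{\mathrm{loc}}$, and since it never reaches $|p_1|=R+1$ it cannot leave through the edge of the cylinder. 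As $\tilde\cG_L(c)$ is by definition the union of tangent lifts of complete globally $c$-minimal orbits, each of which lies in $\mathcal U$ by the preceding step, we conclude $\tilde\cG_L(c)\subset\sL\tilde\Sigma_H$, and in particular $\tilde\cA(c)\subset\tilde\cN(c)\subset\sL\tilde\Sigma_H$.

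\textbf{Main obstacle.} The one genuinely non-formal ingredient is the exclusion of homoclinic orbits of the hyperbolic block $l_2$ from the globally minimal set: such orbits lie in $W^s\cap W^u$ of the cylinder and do minimize over moderate time intervals, so it is only over long intervals that one can beat them, by curves that loiter near the hyperbolic fixed point --- and making this quantitative uses the exponential rate of approach to $(\bfo,\bfo)$. The rest is bookkeeping: arranging that $\mathcal U$ and $\varepsilon_1$ depend only on $(H_0,R)$ (whence the joint use of upper semi-continuity in $(c,L)$ over the compact band of cohomology classes), and keeping the minimal orbits away from the edge $\{|p_1|=R+1\}$ (whence the cylinder is taken strictly wider than the band $|c_1|\le R-1$).
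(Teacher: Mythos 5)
Your proof is correct and follows essentially the same route as the paper: identify $\tilde{\cG}_{L_0}(c)$ explicitly as a circle on the unperturbed cylinder, confine $\tilde{\cG}_{L}(c)$ to a neighborhood of it by the joint upper semi-continuity of $(c,L)\mapsto\tilde{\cG}(c,L)$, and conclude via the maximal-invariant-set property of the persisting normally hyperbolic cylinder. The only difference is that you spell out details the paper asserts without proof, notably the exclusion of orbits homoclinic to $(\bfo,\bfo)$ from $\tilde{\cG}_{l_2}$ by the loitering competitor, and the bookkeeping that keeps the minimal orbits away from the edge $\{|p_1|=R+1\}$.
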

\begin{proof}
We first consider the autonomous Lagrangian $l_2(q_2,v_2)$.  It follows from \eqref{weiyimin} that $(\bfo,\bfo)$ is the unique minimal point of  $l_2$, so the globally minimal set of the Lagrangian $l_2$ is
$$\widetilde{\cG}_{l_2}=(\bfo,\bfo)\times\T\subset T\T\times\T.$$
Then for all $c=(c_1,\bfo)$ with $|c_1|\leq R$, the globally minimal  set of $L_0=l_1(v_1)+l_2(q_2,v_2)$ is
$$\widetilde{\cG}_{L_0}(c)=\{(q_1,\bfo,D h_1(c_1),\bfo,t)~:~ q_1\in\T,t\in\T\}  \textup{~and~} \widetilde{\cG}_{L_0}(c)\subset\sL\widetilde{\Sigma}.$$
Here, the function $h_1$ is given in \eqref{hamiltonian}.

Next,  we take a small neighborhood $U$ of $\sL\widetilde{\Sigma}$ in the space $T\T^2\times\T$ and let $\vep_1=\vep_1(H_0,R)$ be the constant defined in \eqref{diyigeepsilon}. Since $\|H_1\|_{C^3(\sD_R)}\leq\vep_1$, by letting $\vep_1$ suitably small, it follows that $\|L_1\|_{C^2(\sD_R)}$ is also sufficiently small. Thus, by the upper semi-continuity in  Proposition \ref{upper semi},  $\widetilde{\cG}_{L}(c)\subset U$ for all $c\in[-R+1,R-1]\times\{\bfo\}$ where $L=L_0+L_1$. Equivalently, 
$$\sL^{-1}\widetilde{\cG}_{L}(c)\subset \sL^{-1} U.$$

On the other hand, due to normal hyperbolicity and Theorem \ref{persistence}, 
$$\widetilde{\Sigma}_H\subset \sL^{-1} U,$$
provided that $\vep_1$ is small enough. 
Moreover, $\widetilde{\Sigma}_H$ is the unique $\phi^t_L$-invariant set in the neighborhood  $\sL^{-1} U$. This therefore implies  $\sL^{-1}\widetilde{\cG}_{L}(c)\subset\widetilde{\Sigma}_H$ since $\sL^{-1}\widetilde{\cG}_{L}(c)$ is  $\phi^t_L$-invariant.
\end{proof}

In the remainder of this section, we will use the following notation for simplicity.

\noindent\textbf{Notation:}
\begin{enumerate}[\rm(1)]
	\item In what follows,  we use $M$ to denote the manifold $\T^2=\R^2/\Z^2$. Also, we denote by
  $$\check{M}=\T\times2\T=\R/\Z\times\R/2\Z,\quad \check{\pi}:\check{M}\to M$$  the double covering of $M$. We use such a double covering to distinguish between $0$ and $1$ in the $q_2$-coordinate, and identify 0 with 2 in the $q_2$-coordinate.

  The Hamiltonian $H: T^*M\times\T\to\R$ and the Lagrangian $L:TM\times\T\to\R$  could  extend naturally to $T^*\check{M}$ and $T\check{M}$ respectively. By abuse of notation, we continue to write $H: T^*\check{M}\times\T\to\R$ and  $L: T\check{M}\times\T\to\R$ for the new Hamiltonian and Lagrangian respectively. In this setting, the lift of  the NHIC $\Sigma_H(0)$ will have two copies
  $$\check{\pi}^{-1}\Sigma_H(0)=\Sigma_{H,l}(0)\cup\Sigma_{H,u}(0), $$
where the subscripts $l, u$ are introduced to indicate ``lower" and ``upper" respectively. Then 
$\check{\pi}^{-1}\widetilde\Sigma_H=\widetilde\Sigma_{H,l}\cup\widetilde\Sigma_{H,u}$.
\item For simplicity,  we always use $\pi_q$ to denote the natural projection from $T\check{M}$ (resp. $TM$) to $\check{M}$ (resp. $M$) or from $T^*\check{M}$ (resp. $T^*M$) to $\check{M}$ (resp. $M$). 
\item Let $\kappa>0$ be small, we denote by $\mathrm{U}_\kappa=\mathrm{U}_{\kappa,l}\cup\mathrm{U}_{\kappa,u}$ the disconnected subset in $\check{M}$ where
$$ \mathrm{U}_{\kappa,l}=\T\times[\kappa, 1-\kappa],\quad\mathrm{U}_{\kappa,u}=\T\times[1+\kappa, 2-\kappa].$$
Let $\rN_{\kappa}=\check{M}\setminus\mathrm{U}_\kappa=\mathrm{N}_{\kappa,l}\cup\mathrm{N}_{\kappa,u}$ where 
$$\rN_{\kappa,l}=\T\times(-\kappa, \kappa),\quad\rN_{\kappa,u}=\T\times(1-\kappa, 1+\kappa).$$
The subscripts $l, u$ are also introduced to indicate the ``lower" and the ``upper" respectively (See figure \ref{picture2}). The number $\kappa$ should be chosen such that
      $$\pi_q\circ\Sigma_{H,l}(0)\subset\rN_{\kappa/2,l},\qquad\pi_q\circ\Sigma_{H,u}(0)\subset\rN_{\kappa/2,u}.$$
      Namely, the perturbed cylinder  is contained in a $\kappa/2$-neighborhood of the unperturbed one. Also,
 we let
 \begin{equation}\label{c_t1}
 	\widetilde{\Sigma}_{H,l}\subset\rN_{\kappa/2,l}\times\R^2\times\T,\quad\widetilde{\Sigma}_{H,u}\subset\rN_{\kappa/2,u}\times\R^2\times\T. 
 \end{equation}  
      
\item For $c=(c_1, \bfo)$$\in H^1(M,\R)$, if the Aubry set $\widetilde{\cA}_{L}(c,M)|_{t=0}$ is an invariant circle, we denote by $$\Upsilon_c=\sL^{-1}\widetilde{\cA}_{L}(c,M)|_{t=0}\subset T^*M\times\{t=0\}$$ the invariant circle in the cotangent space. This leads us to introduce an index set
      \begin{equation}\label{buianquandeshangtongdiao}
     \mathbb{S}:=\{ (c_1,\bfo)~:~ |c_1|\leq R-1, ~\Upsilon_c \textup{~is an invariant circle lying in~} \Sigma_H(0)\}.
      \end{equation}
 \item Let $\rro>0$ be  small satisfying $\rro>\kappa$.  Since $\Sigma_{H}(0)$ is a NHIM for the time-1 map $\Phi_H^1$, 
  we have the associated local stable and unstable manifolds, denoted by $W_{\Sigma_{H}(0)}^{s,loc}$ and $W_{\Sigma_{H}(0)}^{u,loc}$ respectively, in the $\rro$-tubular neighborhood of $\Sigma_H(0)$. In addition,  $W^{s,loc}_{\Sigma_{H}(0)}=\bigcup_{q\in\Sigma_{H}(0)}W^{s,loc}_q$ and $ W^{u,loc}_{\Sigma_{H}(0)}=\bigcup_{q\in\Sigma_{H}(0)}W^{u,loc}_q$.
\end{enumerate}

Now, let us focus on  $c=(c_1, \bfo)\in\mathbb{S}$. For the $\Phi^1_H$-invariant circle $\Upsilon_c$, it has local stable manifold 
$W^{s,loc}_{\Upsilon_c}=\bigcup_{q\in\Upsilon_c}W^{s,loc}_q$ and local unstable manifold $ W^{u,loc}_{\Upsilon_c}=\bigcup_{q\in\Upsilon_c}W^{u,loc}_q $.  Theorem \ref{property of NHIM} tells us that the leaf $W^{s,loc}_q$ (resp. $W^{u,loc}_q$) has smooth dependence on the base point $q\in\Sigma_H(0)$. Consequently,   $W^{s,loc}_{\Upsilon_c}$  (resp. $W^{u,loc}_{\Upsilon_c}$) is a Lipschitz manifold since  $\Upsilon_c$ is only  Lipschitz  in general. Besides,  the local stable (unstable) manifold can be viewed as a Lipschitz graph over $\check\pi\circ\rN_\rro$, namely
\begin{equation*}
  \begin{split}
   W^{s,loc}_{\Upsilon_c}&=\{ \big(q_1,q_2, \bp_1^s(q_1,q_2), \bp_2^s(q_1,q_2)\big)\in T^*M\times\{t=0\}: (q_1,q_2)\in\check\pi\circ\rN_\rro \}\\
   W^{u,loc}_{\Upsilon_c}&=\{ \big(q_1,q_2, \bp_1^u(q_1,q_2), \bp_2^u(q_1,q_2)\big)\in T^*M\times\{t=0\}: (q_1,q_2)\in\check\pi\circ\rN_\rro  \}
   \end{split}
\end{equation*}
Here, $\bp_1^{s,u}, \bp_2^{s,u}$ are Lipschitz functions on $\check\pi\circ\rN_\rro\subset M$, and the domain $\rN_{\rro}=\mathrm{N}_{\rro,l}\cup\mathrm{N}_{\rro,u}$ with
\[\rN_{\rro,l}=\T\times(-\rro, \rro),\quad\rN_{\rro,u}=\T\times(1-\rro, 1+\rro).\]

Next, in the  covering space $\check{M}$, the Aubry set $\widetilde{\cA}_{L}(c,\check{M})$ is the union of two disjoint copies of $\widetilde{\cA}_{L}(c,M)$ satisfying $\check{\pi}\widetilde{\cA}_{L}(c,\check{M})=\widetilde{\cA}_{L}(c,M)$. More precisely,  $\sL^{-1}\widetilde{\cA}_{L}(c,\check{M})\big|_{t=0}=\Upsilon_{c,l}\cup\Upsilon_{c,u}$, where $\Upsilon_{c,\imath}$ lies in $\Sigma_{H,\imath}(0)$
and its stable and unstable manifolds are
\begin{equation*}
  \begin{split}
   W^{s,loc}_{\Upsilon_{c,\imath}}&=\{ \big(q_1, q_2, \bp_1^s(q_1, q_2), \bp_2^s(q_1, q_2)\big)\in T^*\check{M}\times\{t=0\}: (q_1, q_2)\in\rN_{\rro,\imath} \}\\
   W^{u,loc}_{\Upsilon_{c,\imath}}&=\{ \big(q_1, q_2, \bp_1^u(q_1, q_2), \bp_2^u(q_1, q_2)\big)\in T^*\check{M}\times\{t=0\}: (q_1, q_2)\in\rN_{\rro,\imath} \}
   \end{split}
\end{equation*}
with $\imath=l, u$. Here, by abuse of notation, we have continued to use $\bp_1^{s,u}, \bp_2^{s,u}$ to denote the corresponding Lipschitz functions defined on the lift of $\check\pi\circ\rN_\rro$.
The lemma below gives the relation between the elementary weak KAM solutions and the local stable  and unstable manifolds.
\begin{Lem}\label{local manifolds representation}
There exists $\rro>0$ such that for each $c=(c_1, \bfo)\in\mathbb{S}$, we have
\begin{enumerate}[\rm(1)]
  \item for each backward elementary weak KAM solution $u^-_{c, \imath}(q,t)$ with $\imath=l, u$,   the function $u^-_{c, \imath}(q,0)$ is $C^{1,1}$ in the domain $\rN_{\rro,\imath}$ and generates the local unstable manifold of $\Upsilon_{c,\imath}$, i.e.
$$ W^{u,loc}_{\Upsilon_{c,\imath}}=\{ \big(q, c+\partial_qu^-_{c, \imath}(q,0)\big): q\in\rN_{\rro,\imath} \},\qquad \imath=l, u.$$
  \item for each forward elementary weak KAM solution $u^+_{c, \imath}(q,t)$ with $\imath=l, u$,   the function $u^+_{c, \imath}(q,0)$ is $C^{1,1}$ in the domain $\rN_{\rro,\imath}$ and generates the local stable manifold of $\Upsilon_{c,\imath}$, i.e.
$$ W^{s,loc}_{\Upsilon_{c,\imath}}=\{ \big(q, c+\partial_qu^+_{c, \imath}(q,0)\big): q\in\rN_{\rro,\imath} \},\qquad \imath=l, u.$$
\end{enumerate}
\end{Lem}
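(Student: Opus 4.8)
The plan is to prove part (1) — the backward / unstable statement — in full, since part (2) then follows verbatim after reversing time, which interchanges backward and forward weak KAM solutions and interchanges the unstable and stable manifolds. Throughout we fix $c=(c_1,\bfo)\in\mathbb{S}$ and work in the double cover $\check{M}$, where $\sL^{-1}\tilde{\cA}_{L}(c,\check{M})|_{t=0}=\Upsilon_{c,l}\cup\Upsilon_{c,u}$ and each $\Upsilon_{c,\imath}$ is an invariant circle sitting on the component $\Sigma_{H,\imath}(0)$ of the lifted normally hyperbolic cylinder. By Birkhoff's theorem $\Upsilon_{c,\imath}$ is a Lipschitz graph inside $\Sigma_{H,\imath}(0)$, so by the smooth dependence of the hyperbolic leaves on the base point (Theorem \ref{property of NHIM}) its local unstable manifold $W^{u,loc}_{\Upsilon_{c,\imath}}$ is the Lipschitz graph $\{(q,P^u(q)):q\in\rN_{\kappa,\imath}\}$ with $P^u=(\bp_1^u,\hat\bp^u)$, exactly as recalled just above the lemma; the sizes of these local manifolds are uniform over the compact set $\mathbb{S}$, so a single $\kappa$ works for all $c\in\mathbb{S}$.

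The core step is to show that $c+\partial_qu^-_{c,\imath}(q,0)=P^u(q)$ at every point $q\in\rN_{\kappa,\imath}$ where $u^-_{c,\imath}(\cdot,0)$ is differentiable. At such a point, by Proposition \ref{properties weak KAM}(1) there is a unique backward calibrated curve $\gamma^-:(-\infty,0]\to\check{M}$ with $\gamma^-(0)=q$ and $\sL(d\gamma^-(0),0)=(q,\,c+\partial_qu^-_{c,\imath}(q,0),\,0)$. I claim $\gamma^-$ is negatively asymptotic to $\tilde{\cA}_{c,\imath}$. This is where the structure of the elementary weak KAM solution is used: by Definition \ref{def of EWS} and Theorem \ref{representation of EWS}, $u^-_{c,\imath}$ is the uniform limit of the unique backward weak KAM solutions $u^-_{c,\imath,\varepsilon}$ of $L+\varepsilon V$, for which $\cA_{c,\imath}$ is the only Aubry class, so every backward calibrated curve of $u^-_{c,\imath,\varepsilon}$ is negatively asymptotic to $\cA_{c,\imath}$; extracting by Ascoli a limit of the backward calibrated curves of $u^-_{c,\imath,\varepsilon_n}$ through $(q,0)$ yields a backward calibrated curve of $u^-_{c,\imath}$ through $(q,0)$, which by uniqueness is $\gamma^-$ itself, and the negative asymptotics to $\cA_{c,\imath}$ survives the limit because the hyperbolicity transverse to the cylinder is uniform and $\alpha_\varepsilon(c)=\alpha(c)$ (so the $V$-terms drop out), cf. \cite{Be2008,Ch2012}. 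Since $\tilde{\cA}_{c,\imath}=\sL\Upsilon_{c,\imath}\times\T$ lies on the suspended cylinder (Lemma \ref{minimal set on cylinder}) and $\Upsilon_{c,\imath}$ is hyperbolic in the normal directions, the suspended orbit $\{(d\gamma^-(t),t)\}$ lies on the global unstable manifold $W^{u}_{\Upsilon_{c,\imath}}$; near $t=0$ it stays over $\rN_{\kappa,\imath}$, hence lies on $W^{u,loc}_{\Upsilon_{c,\imath}}$, and the graph property forces $\sL(d\gamma^-(0),0)=(q,P^u(q),0)$, i.e. $c+\partial_qu^-_{c,\imath}(q,0)=P^u(q)$.

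It remains to upgrade this pointwise identity to the asserted regularity on all of $\rN_{\kappa,\imath}$. The weak KAM solution $u^-_{c,\imath}(\cdot,0)$ is Lipschitz on $\rN_{\kappa,\imath}$ and, being semiconcave, differentiable almost everywhere, where its gradient equals the Lipschitz (in particular continuous) field $P^u-c$; a Lipschitz function whose almost-everywhere gradient coincides with a continuous field is $C^1$ with that gradient, so $u^-_{c,\imath}(\cdot,0)\in C^{1,1}(\rN_{\kappa,\imath})$ with $\partial_qu^-_{c,\imath}(\cdot,0)=P^u-c$, and consequently $\{(q,\,c+\partial_qu^-_{c,\imath}(q,0)):q\in\rN_{\kappa,\imath}\}=\{(q,P^u(q)):q\in\rN_{\kappa,\imath}\}=W^{u,loc}_{\Upsilon_{c,\imath}}$. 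This proves (1), and (2) follows by the time-reversal symmetry. The main obstacle is the asymptotics claim in the second paragraph — that the backward calibrated curves of the elementary weak KAM solution $u^-_{c,\imath}$ are negatively asymptotic to the matching Aubry class $\tilde{\cA}_{c,\imath}$ rather than to the other copy $\tilde{\cA}_{c,\bar\imath}$: making the limiting argument in $\varepsilon$ precise, together with the localization furnished by Lemma \ref{minimal set on cylinder} and the disjointness of $\rN_{\kappa,l}$ and $\rN_{\kappa,u}$, is the delicate point; the uniform choice of $\kappa$ over $\mathbb{S}$ is a minor but necessary ingredient.
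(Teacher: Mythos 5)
Your overall skeleton (identify the calibrated curve's initial covector with the unstable graph at differentiability points, then upgrade to $C^{1,1}$ by the ``Lipschitz function with a.e.\ gradient equal to a continuous field'' argument) is reasonable, and the mollification upgrade is a legitimate alternative to the paper's route (the paper instead gets uniqueness of the calibrated curve at \emph{every} point of $\rN_{\kappa,\imath}$ and invokes weak KAM regularity). But there are two genuine gaps in the core step.

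First, the asymptotics claim — that the backward calibrated curve of $u^-_{c,\imath}$ through $(q,0)$, $q\in\rN_{\kappa,\imath}$, is negatively asymptotic to $\cA_{c,\imath}$ and not to the other copy $\cA_{c,\bar\imath}$ — is asserted but not proved. Your justification (``the hyperbolicity transverse to the cylinder is uniform and the $V$-terms drop out'') does not address the actual difficulty: both copies $\Upsilon_{c,l}$ and $\Upsilon_{c,u}$ sit on the lifted cylinder, so transverse hyperbolicity cannot select between them, and in the double limit ($\varepsilon\to 0$ first, then backward time $\to\infty$) the time at which the $\varepsilon$-calibrated curves enter a fixed neighborhood of $\cA_{c,\imath}$ may blow up, so the asymptotics need not pass to the limit curve. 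The paper closes this by a different mechanism: it uses the representation $u^-_{c,\imath}=h^\infty_c((x_\imath,\tau_\imath),\cdot)+C$ from Theorem \ref{representation of EWS} and a compactness/contradiction argument with the Peierls barrier, showing that if endpoints $x_k\to x^*\in\pi_q\circ\Upsilon_{c,l}$ had $\alpha$-limit points converging to $\alpha^*\in\pi_q\circ\Upsilon_{c,u}$, then $h^\infty_c((x^*,0),(\alpha^*,0))+h^\infty_c((\alpha^*,0),(x^*,0))=0$, forcing the two points into the same Aubry class. You would need this (or an equivalent) argument; the $\varepsilon$-approximation alone does not deliver it.

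Second, your inference ``the suspended orbit lies on the global unstable manifold $W^u_{\Upsilon_{c,\imath}}$; near $t=0$ it stays over $\rN_{\kappa,\imath}$, hence lies on $W^{u,loc}_{\Upsilon_{c,\imath}}$'' is a non sequitur. A point of the global unstable manifold projecting into $\rN_{\kappa,\imath}$ need not belong to the local unstable manifold (think of points on homoclinic excursions returning over $\rN_{\kappa,\imath}$); membership in $W^{u,loc}$ requires, by definition \eqref{definition of local stable}, that the \emph{entire} backward orbit stay in the fixed neighborhood of the cylinder. This is exactly what the paper's second bullet establishes — by another compactness/contradiction argument, every $u^-_{c,l}$-calibrated curve starting in $\rN_{\kappa,l}$ satisfies $\gamma^-(-i)\in\rN_{1/4,l}$ for all $i\in\N$ — and only then does normal hyperbolicity place the orbit on $W^{u,loc}_{\Upsilon_{c,l}}$ and force $\sL^{-1}(d\gamma^-(0),0)=(q,P^u(q))$. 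Without this localization step your identification of $c+\partial_q u^-_{c,\imath}(q,0)$ with $P^u(q)$ does not go through.
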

\begin{proof}
We only prove for the case $u^-_{c, l}$ since the other cases are similar.\\
\textbf{Step 1:} We first claim that there exists a neighborhood $V$ of $\pi_q\circ\Upsilon_{c,l}$ in $\check{M}$ such that for each $\xi^-: (-\infty, 0]\to \check{M}$ calibrated by $u^-_{c,l}$ with $\xi^-(0)\in V$, the $\alpha$-limit set of the backward minimal configuration $\{\xi^-(-i)\}_{i\in\Z^+}$ must be contained in $\pi_q\circ\Upsilon_{c,l}$.

Assume by contradiction that there exist a sequence of backward calibrated curves $\xi^-_k:(-\infty, 0]$ $\to\check{M}$ with $\xi_k^-(0)=x_k$, and a sequence $\alpha_k$ which belongs to the $\alpha$-limit set of the backward minimal configuration $\{\xi_k^-(-i)\}_{i\in\Z^+}$ satisfying
\begin{equation}\label{class1}
  \lim\limits_{k\to\infty}x_k=x^*\in\pi_q\circ\Upsilon_{c,l}\quad\textup{and}\quad \lim\limits_{k\to\infty}\alpha_k=\alpha^*\notin\pi_q\circ\Upsilon_{c,l}.
\end{equation}
This implies $\alpha^*\in\pi_q\circ\Upsilon_{c,u}$ since the $\alpha$-limit set of each minimal curve shall be contained in the Aubry set. By Theorem \ref{representation of EWS}, each $\xi^-_k: (-\infty, 0]\to\check{M}$ is $c$-semi static and calibrated by $h^\infty_c((x^*,0),\cdot)$:
$$h^\infty_c\big((x^*,0),(\xi^-_k(0),0)\big)-h^\infty_c\big((x^*,0),(\xi^-_k(-t),-t)\big)=h_c^{-t,0}\big(\xi^-_k(-t),\xi^-_k(0)\big),\quad\forall t\in\Z^+.$$
This further gives $h^\infty_c\big((x^*,0),(x_k,0)\big)-h^\infty_c\big((x^*,0),(\alpha_k,0)\big)\geq h^\infty_c\big((\alpha_k,0),(x_k,0)\big).$ The opposite inequality is obvious, 
therefore $h^\infty_c((x^*,0),(x_k,0))-h^\infty_c((x^*,0),(\alpha_k,0))=$ $ h^\infty_c((\alpha_k,0),(x_k,0))$.
Sending $k\to\infty$, it follows that
$$0=h^\infty_c\big((x^*,0),(x^*,0)\big)=h^\infty_c\big((x^*,0),(\alpha^*,0)\big)+ h^\infty_c\big((\alpha^*,0),(x^*,0)\big).$$
Hence, $(x^*,0)$ and $(\alpha^*,0)$ belong to the same Aubry class, which contradicts  \eqref{class1}.\\
\textbf{Step 2:}  By letting the above domain $V$ be suitably small if necessary,   $W^{u}_{\Upsilon_{c,l}}$ is a Lipschitz graph over  $V$,  we will show that there exists a small number $\rro>0$ such that $\rN_{\rro,l}\subset V$, and each  $u^-_{c,l}$-calibrated curve $\gamma^-: (-\infty, 0]\to \check{M}$ with $\gamma^-(0)\in\rN_{\rro,l}$  satisfies $\gamma^-(-m)\in V$, $\forall~m\in\N$.

To prove this, assume by contradiction that there exist a sequence of $u^-_{c,l}$-calibrated curves $\gamma^-_j: (-\infty, 0]$ $\to$ $\check{M}$ and a sequence of  positive integers $T_j$ such that  $\gamma_j^-(-T_j)\notin V,$
$\gamma_j^-(-m)\in V,$ $m\in\{0, 1, \cdots$, $T_j-1\}$ and $\lim_{j\to\infty}\textup{dist}(\gamma^-_j(0), \pi_q\circ\Upsilon_{c,l})=0$.

We set  $\eta^-_j(t):=\gamma_j^-(t-T_j)$, then $\eta^-_j: (-\infty, T_j]\to \check{M}$ is still a calibrated curve and
\begin{equation}\label{contain1}
\eta^-_j(0)\notin V,~ \eta_j^-(m)\in V, ~ m\in\{1,2,\cdots,T_j\}
\end{equation}
and
\begin{equation}\label{a sequence of curves}
 \lim\limits_{j\to\infty}\textup{dist}(\eta^-_j(T_j), \pi_q\circ\Upsilon_{c,l})=0.
\end{equation}
Extracting a subsequence if necessary, we  suppose that $(\eta^-_j(t), \dot{\eta}^-_j(t))$ converges uniformly to a limit curve $(\eta^-(t), \dot{\eta}^-(t)): I\to\check{M}$  on any compact interval of $\R$. Here,  the interval $I$ is either $(-\infty, T]$ or $\R$ ($T$ is a positive integer). Obviously, $\eta^-(t)$ is still calibrated by $u^-_{c,l}$ and
\begin{equation}\label{initial value of limit curve}
 \eta^-(0)\notin V.
\end{equation}

In the case  $I=(-\infty, T]$, one obtains $\eta^-(T)\in\pi_q\circ\Upsilon_{c,l}$ as a consequence of \eqref{a sequence of curves}, and hence  $\{\eta^-(m)\}_{m\in\Z, m\leq T}\subset\pi_q\circ\Upsilon_{c,l}$, which contradicts  \eqref{initial value of limit curve}. In the case  $I=\R$,  it follows from \eqref{contain1} that the $\omega$-limit set of $\{\eta^-(m)\}_{m\in\Z}$ lies in $\pi_q\circ\Upsilon_{c,l}$, then $\{\eta^-(m)\}_{m\in\Z}\subset\pi_q\circ\Upsilon_{c,l}$ since the $\omega$-limit set and $\alpha$-limit set belong to the same Aubry class, which contradicts  \eqref{initial value of limit curve}. \\
\textbf{Step 3:}  By what we have proved above, for each $u^-_{c,l}$-calibrated curve $\gamma^-$ with $\gamma^-(0)\in\rN_{\rro,l}$,  $\{(\gamma^-(-m)$, $\dot{\gamma}^-(-m))\}_{i\in\Z^+}$  would always stay in a small neighborhood of the cylinder $\sL\Sigma_{H,l}(0)$. This also means that the $\alpha$-limit set of $\{\sL^{-1}\big(\gamma^-(-m)$, $\dot{\gamma}^-(-m)\big)\}_{m\in\Z^+}$ lies in $\Upsilon_{c,l}$. By normal hyperbolicity, $\{\sL^{-1}\big(\gamma^-(-m)$, $\dot{\gamma}^-(-m)\big)\}_{m\in\Z^+}$ $\subset$ $W^{u,loc}_{\Upsilon_{c,l}}$. Thus, for each $q\in \rN_{\rro,l}$, there is a unique $u^-_{c,l}$-calibrated  curve $\gamma^-: (-\infty, 0]\to\check{M}$  with $\gamma^-(0)=q$ since $W^{u,loc}_{\Upsilon_{c,l}}$ is a Lipschitz graph over $\rN_{\rro,l}$ $\subset V$. By weak KAM theory, $u^-_{c,l}$ is therefore $C^{1,1}$ in  $\rN_{\rro,l}$. Moreover,  Proposition \ref{properties weak KAM} implies that
$$(q, c+\partial_qu^-_{c,l}(q,0))=\sL^{-1}\big(\gamma^-(0),\dot{\gamma}^-(0)\big)\in W^{u,loc}_{\Upsilon_{c,l}}.$$
This completes the proof.
\end{proof}

In \cite{CY2004}, the authors introduced an``area" parameter $\sigma$ to parameterize an invariant circle lying on the NHIC so that the invariant circle  $\Gamma_\sigma$ is $\frac{1}{2}$-H\"older continuous with respect to $\sigma$, namely
$$\|\Gamma_{\sigma_1}-\Gamma_{\sigma_2}\|_{C^0}\leq C|\sigma_1-\sigma_2|^{\frac{1}{2}}$$
However, this result can be improved by taking advantage of the tools in  weak KAM theory. Roughly speaking,  the ``area" parameter $\sigma$ is, to some extent, the cohomology class $c$ (see  Lemma \ref{local11} and Theorem \ref{global regularity of elementary solutions} below). Similar results could be found in \cite{BKZ2016}. 

Recall that the invariant circle $\Upsilon_{c,\imath}$, where $c\in\mathbb{S}$ and $\imath=l,u$, can be viewed as a Lipschitz graph over $q_1$. More precisely, by abuse of notation, we continue to write $\Upsilon_{c,\imath}$  for this Lipschitz function
$$ \Upsilon_{c,\imath}: \T\longrightarrow\Sigma_{H,\imath}(0)\subset \T^2\times\R^2$$
 $$q_1\longmapsto(q_1, ~\pi_{q_2}\circ\Upsilon_{c,\imath}(q_1), ~\pi_{p_1}\circ\Upsilon_{c,\imath}(q_1),~ \pi_{p_2}\circ\Upsilon_{c,\imath}(q_1))$$
with $\pi_{q_1}\circ\Upsilon_{c,\imath}(q_1)=q_1$ and $\imath=l, u$.  Then, we have:
\begin{Lem}[$\frac{1}{2}$-H\"older regularity]\label{local11}
There exists a positive constant $C$ such that for any $c, c'\in\mathbb{S}$,
\begin{enumerate}[\rm(1)]
  \item $\max_{q_1}\|\Upsilon_{c,l}(q_1)-\Upsilon_{c',l}(q_1)\|\leq C\|c-c'\|^{\frac{1}{2}},$
  \item $\max_{q_1}\|\Upsilon_{c,u}(q_1)-\Upsilon_{c',u}(q_1)\|\leq C\|c-c'\|^{\frac{1}{2}}.$
\end{enumerate}
\end{Lem}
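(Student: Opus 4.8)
The plan is to reduce the statement to a one–dimensional H\"older estimate for the graphs of the invariant circles over $q_1$, and then to exploit the fact that the symplectic area enclosed between two minimal invariant circles on the normally hyperbolic cylinder equals the difference of their cohomology classes. The $\tfrac12$ exponent will then come out automatically: the area depends \emph{linearly} on $c$ (Stokes' theorem), while it depends \emph{quadratically} on the $C^0$–distance between the two circles (because the circles are uniformly Lipschitz graphs and the gap between two ordered circles is nonnegative). I will only treat case (1); case (2) is verbatim the same with $\rN_{\kappa,l},\Sigma_{H,l}(0)$ replaced by $\rN_{\kappa,u},\Sigma_{H,u}(0)$.

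First I would set up the reduction. Since $\Sigma_{H,l}(0)$ is, by \eqref{graph of cylinder}, the graph of the $C^2$ maps $\hat{\bq},\hat{\bp}$ over the $(q_1,p_1)$–plane, and since for $c\in\mathbb{S}$ the curve $\Upsilon_{c,l}$ is a Lipschitz graph over $q_1$ lying on $\Sigma_{H,l}(0)$, I can write $\Upsilon_{c,l}(q_1)=\big(q_1,\hat{\bq}(q_1,P_c(q_1)),P_c(q_1),\hat{\bp}(q_1,P_c(q_1))\big)$ for a Lipschitz function $P_c:\T\to\R$. As $\hat{\bq},\hat{\bp}$ are Lipschitz on the compact set $\Sigma(0)$ with a constant independent of $c$, it suffices to prove $\max_{q_1}|P_c(q_1)-P_{c'}(q_1)|\le C\|c-c'\|^{1/2}$.

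For the core estimate I would combine two facts. On the one hand, by Lemma \ref{local manifolds representation}(1), on $\rN_{\kappa,l}$ the function $u^-_{c,l}(\cdot,0)$ is single–valued and $C^{1,1}$ and $\Upsilon_{c,l}=\{(q,c+\partial_q u^-_{c,l}(q,0)):q\in\pi_q\Upsilon_{c,l}\}$; since $\pi_q\Upsilon_{c,l}\subset\rN_{\kappa,l}$ is a closed loop of homology class $(1,\bfo)$ and $\oint d u^-_{c,l}(\cdot,0)=0$, this gives $\oint_{\Upsilon_{c,l}}p\,dq=\langle c,(1,\bfo)\rangle=c_1$. On the other hand, distinct minimal invariant circles on $\Sigma_{H,l}(0)$ are pairwise disjoint — classical Aubry--Mather theory, or the Birkhoff graph theorem applied to the exact twist map $(\psi\circ\psi_1)^{-1}\circ\Phi_H^1\circ(\psi\circ\psi_1)$ — hence $P_c$ and $P_{c'}$ are ordered, say $P_c\ge P_{c'}$, and they bound the annulus $A=\{(q_1,p_1):P_{c'}(q_1)\le p_1\le P_c(q_1)\}\subset\Sigma_{H,l}(0)$. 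Since $p\,dq$ is, up to sign, a primitive of $\Omega$, Stokes' theorem yields $\int_A\Omega=\pm(c_1-c_1')$, so $\big|\int_A\Omega\big|=\|c-c'\|$ up to a fixed norm constant. Writing $\Omega|_{\Sigma_{H,l}(0)}=\rho\,dq_1\wedge dp_1$ with $\rho$ close to $1$ as in the text (so $0<\rho_{\min}\le|\rho|\le\rho_{\max}$ on the compact cylinder), I obtain $\int_0^1\big(P_c(q_1)-P_{c'}(q_1)\big)\,dq_1\le\rho_{\min}^{-1}\|c-c'\|$.

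To pass from this $L^1$–bound to a $C^0$–bound I would use a uniform Lipschitz bound $\mathrm{Lip}(P_c)\le K$ valid for all $c\in\mathbb{S}$ — this follows from the graph property (Proposition \ref{upper semi}(1)) together with the compactness of the parameter range, or again from the Birkhoff graph theorem for the fixed twist map. If $\epsilon=\max_{q_1}(P_c-P_{c'})(q_1)$ is attained at some $q_1^0$ and $\epsilon\le 2K$ (the only relevant regime), the nonnegative difference $P_c-P_{c'}$ stays $\ge\epsilon/2$ on an interval of length $\epsilon/(2K)$ around $q_1^0$, so $\int_0^1(P_c-P_{c'})\,dq_1\ge\epsilon^2/(4K)$; combined with the previous step, $\epsilon^2\le 4K\rho_{\min}^{-1}\|c-c'\|$. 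When $\|c-c'\|$ is bounded away from $0$ the inequality is trivial since the circles lie in a fixed compact region. Together with the reduction this proves the lemma. I expect the main difficulty to be the two structural inputs on the minimal invariant circles — that they are pairwise disjoint (so that $P_c,P_{c'}$ are comparable and the area is a genuine single integral of $P_c-P_{c'}$) and that their Lipschitz constants are bounded uniformly in $c$ — both of which must be traced back to Aubry--Mather / Birkhoff theory for the fixed restricted twist map rather than re-proved; by contrast, the identity $\oint_{\Upsilon_{c,l}}p\,dq=c_1$ is essentially a restatement of Lemma \ref{local manifolds representation}.
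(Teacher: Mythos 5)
Your proposal is correct and follows essentially the same route as the paper: the identity $\oint_{\Upsilon_{c,l}}p\,dq=c_1$ via the weak KAM solution, Stokes' theorem for the area between the two ordered circles, the lower bound on the density of $\Omega$ restricted to the cylinder, the quadratic lower bound $\int(P_c-P_{c'})\ge \epsilon^2/(4K)$ coming from the uniform Lipschitz constant, and the final transfer to the remaining coordinates via the Lipschitz graph maps $\hat{\bq},\hat{\bp}$. The only difference is cosmetic: you flag explicitly that the disjointness/ordering of the circles and the uniform Lipschitz bound are inputs from Aubry--Mather/Birkhoff theory for the restricted twist map, which the paper uses implicitly.
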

\begin{proof}
We only prove item (1) and the other one is similar. Recall that Lemma \ref{local manifolds representation}  tells us that the elementary weak KAM solution $u^-_{c,l}$ is $C^{1,1}$ in $\rN_{\rro,l}$. Now, in the 4-dimensional space $T^*\rN_{\rro,l}=\rN_{\rro,l}\times\R^2$,
we define two 1-forms $\omega_1$ $=\big(c_1+\partial_{q_1}u^-_{c,l}(q,0)\big)dq_1$ $+\partial_{q_2}u^-_{c,l}(q,0)dq_2$ and $\omega_2=p_1dq_1+p_2 dq_2$. Note that  $\omega_1|_{\Upsilon_{c,l}}=\omega_2|_{\Upsilon_{c,l}}$ as a consequence of Proposition \ref{properties weak KAM} and Lemma \ref{local manifolds representation}. Then,
\begin{equation*}
\int_{\Upsilon_{c,l}}\omega_2=\int_{\Upsilon_{c,l}}\omega_1=\int_{\Upsilon_{c,l}}[c_1dq_1+du^-_{c,l}(q,0)]=\int_{\Upsilon_{c,l}}c_1dq_1=c_1.
\end{equation*}
For $c, c'\in\mathbb{S}$, we may assume $c'_1>c_1$. Let $D$ be the region on the cylinder $\Sigma_{H,l}(0)$ between $\Upsilon_{c,l}$ and $\Upsilon_{c',l}$ (see figure \ref{regionD}). By Stoke's theorem,
\begin{equation}\label{stoke formula}
  \int_{D}\sum\limits_{i=1}^2 dp_i\wedge dq_i=\int_{\Upsilon_{c,l}}\omega_2-\int_{\Upsilon_{c',l}}\omega_2=c_1-c'_1.
\end{equation}
\begin{figure}[H]
  \centering
  \includegraphics[width=4.4cm]{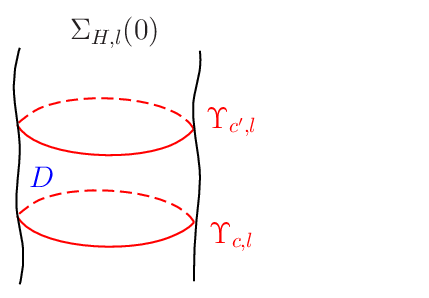}
  \caption{The region $D$ is bounded by two invariant circles}\label{regionD}
\end{figure}
Then \eqref{graph of cylinder} and \eqref{stoke formula} together imply 
\begin{equation}\label{area form estimate}
\begin{split}
|c_1-c_1'|&=\left|\int_{D}\sum\limits_{i=1}^2 dp_i\wedge dq_i\right|=\left|\int_{D}\left(1+\frac{\partial(\bp_{2},\bq_{2})}{\partial(p_1,q_1)}\right)dp_1\wedge dq_1\right|\\
&\geq\frac{1}{4}\left|\int_{D}dp_1\wedge dq_1\right|=\frac{1}{4}\left|\int_{\Upsilon_{c,l}}p_1dq_1-\int_{\Upsilon_{c',l}}p_1dq_1\right|\\
&=\frac{1}{4}\left|\int_{\T}\pi_{p_1}\circ\Upsilon_{c,l}(q_1)-\pi_{p_1}\circ\Upsilon_{c',l}(q_1)~dq_1\right|
\end{split}
\end{equation}
As the Lipschitz functions $\pi_{p_1}\circ\Upsilon_{c,l}$, ~ $\pi_{p_1}\circ\Upsilon_{c',l}:\T\rightarrow\R$ satisfy $\pi_{p_1}\circ\Upsilon_{c',l}> \pi_{p_1}\circ\Upsilon_{c,l}$, we have
\begin{equation}\label{holder regularity of Upsilon}
 \int_{\T}\pi_{p_1}\circ\Upsilon_{c',l}(q_1)-\pi_{p_1}\circ\Upsilon_{c,l}(q_1)~dq_1\geq\frac{1}{4C_L}\left(\max\limits_{q_1}|\pi_{p_1}\circ\Upsilon_{c',l}(q_1)-\pi_{p_1}\circ\Upsilon_{c,l}(q_1)|\right)^2,
\end{equation}
where $C_L$ is the Lipschitz bound of the functions $\pi_{p_1}\circ\Upsilon_{c,l}$ and $\pi_{p_1}\circ\Upsilon_{c',l}$.

Recall that the function $\bp_2(q_1,p_1)$ is at least $C^1$, then there exists a constant $K>0$ such that
\begin{equation}\label{norm estimate}
\begin{split}
 &\|\pi_{p}\circ\Upsilon_{c,l}(q_1)-\pi_{p}\circ\Upsilon_{c',l}(q_1)\|\\
 =& |\pi_{p_1}\circ\Upsilon_{c,l}(q_1)-\pi_{p_1}\circ\Upsilon_{c',l}(q_1)|+|\pi_{p_2}\circ\Upsilon_{c,l}(q_1)-\pi_{p_2}\circ\Upsilon_{c',l}(q_1)|\\
=&|\pi_{p_1}\circ\Upsilon_{c,l}(q_1)-\pi_{p_1}\circ\Upsilon_{c',l}(q_1)|+|\bp_2(q_1,\pi_{p_1}\circ\Upsilon_{c,l}(q_1))-\bp_2(q_1,\pi_{p_1}\circ\Upsilon_{c',l}(q_1))|\\
\leq&(1+K)|\pi_{p_1}\circ\Upsilon_{c,l}(q_1)-\pi_{p_1}\circ\Upsilon_{c',l}(q_1)|,
\end{split}
\end{equation}
Thus, combining \eqref{area form estimate}, \eqref{holder regularity of Upsilon} with \eqref{norm estimate}, one obtains
\begin{equation*}
\begin{split}
\|c-c'\|&\geq|c_1-c_1'|\geq\frac{1}{16C_L}\big(\max\limits_{q_1}|\pi_{p_1}\circ\Upsilon_{c,l}(q_1)-\pi_{p_1}\circ\Upsilon_{c',l}(q_1)|\big)^2\\
&\geq\frac{1}{16C_L(1+K)^{2}}\big(\max\limits_{q_1}\|\pi_{p}\circ\Upsilon_{c,l}(q_1)-\pi_{p}\circ\Upsilon_{c',l}(q_1)\|\big)^2,
\end{split}
\end{equation*}
which implies 
$$\max\limits_{q_1}\|\pi_{p}\circ\Upsilon_{c,l}(q_1)-\pi_{p}\circ\Upsilon_{c',l}(q_1)\|\leq4\sqrt{C_L}(1+K)\|c-c'\|^\frac{1}{2}. $$

Next, since the function $\bq_2(q_1, p_1)$ is at least $C^1$,  there exists a constant $\tilde{C}>0$ such that
\begin{align*}
	\max\limits_{q_1}|\pi_{q_2}\circ\Upsilon_{c,l}(q_1)-\pi_{q_2}\circ\Upsilon_{c',l}(q_1)|& =\max\limits_{q_1}|\bq_2(q_1,\pi_{p_1}\circ\Upsilon_{c,l}(q_1))-\bq_2(q_1,\pi_{p_1}\circ\Upsilon_{c',l}(q_1))\\
	& \leq \widetilde{C}\max\limits_{q_1}|\pi_{p_1}\circ\Upsilon_{c,l}(q_1)-\pi_{p_1}\circ\Upsilon_{c',l}(q_1)|\\
	&\leq 4\sqrt{C_L}\widetilde{C}(1+K)\|c-c'\|^\frac{1}{2}.
\end{align*}
Consequently, item (1) follows immediately by  
 setting $C=4\sqrt{C_L}(1+\widetilde{C})(1+K)$.
\end{proof}

We also mention that, for the Peierls barriers restricted on the NHIC, one can even obtain the H\"older continuity  with respect to perturbations \cite{CC2017}. 

The result below is analogous to \cite[Lemma 6.4]{CY2009} and will be crucial for the proof of genericity. 

\begin{The}\label{global regularity of elementary solutions}
Let $\rro$ be the constant given in Lemma \ref{local manifolds representation}, and we fix two points $z_l\in\rN_{\rro,l}, z_u\in\rN_{\rro,u}$. Let $u^\pm_{c,l}(q,t)$, $u^\pm_{c,u}(q,t)$ be the elementary weak KAM solutions satisfying $u^{\pm}_{c,l}(z_l,0)=u^{\pm}_{c,l}(z_u,0)$ $\equiv \textup{constant}$, for all $c\in\mathbb{S}$. Then there exists $C_h>0$ such that for any $c, c'\in\mathbb{S}$
$$|u^{\pm}_{c,l}(q,0)-u^\pm_{c',l}(q,0)|\leq C_h(\|c'-c\|^{\frac{1}{2}}+\|c'-c\|),\quad\forall q\in\check{M}\setminus \rN_{\rro,u}$$
and
$$|u^{\pm}_{c,u}(q,0)-u^\pm_{c',u}(q,0)|\leq C_h(\|c'-c\|^{\frac{1}{2}}+\|c'-c\|), \quad\forall q\in\check{M}\setminus \rN_{\rro,l}.$$
\end{The}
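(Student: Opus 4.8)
I will treat the $u^{-}$ statements; the $u^{+}$ ones follow by reversing time, and the two copies $l,u$ play symmetric roles, so it is enough to bound $|u^{-}_{c,l}(q,0)-u^{-}_{c',l}(q,0)|$ for $q\in\check M\setminus\rN_{\kappa,u}$. The plan has two parts: first use Lemma~\ref{local manifolds representation} and Lemma~\ref{local11} to get the estimate in the two ``tubes'' $\rN_{\kappa,l},\rN_{\kappa,u}$, where the elementary solution is $C^{1,1}$ and its gradient is explicitly a piece of a local (un)stable manifold; then propagate this to all of $\check M\setminus\rN_{\kappa,u}$ by pushing calibrated curves backward into $\rN_\kappa$ in uniformly bounded time.

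\textbf{Step 1: the estimate on $\rN_{\kappa,l}$ and on $\rN_{\kappa,u}$.} By Lemma~\ref{local manifolds representation}(1), for $c\in\mathbb{S}$ the function $u^{-}_{c,l}(\cdot,0)$ is $C^{1,1}$ on $\rN_{\kappa,l}$ with $c+\partial_q u^{-}_{c,l}(q,0)=\bp^{u}_{c}(q)$, the graph of $W^{u,loc}_{\Upsilon_{c,l}}$. Each unstable leaf $W^{u,loc}_z$ depends in a $C^{1}$ (hence locally Lipschitz) way on the base point $z\in\Sigma_{H,l}(0)$ by Theorem~\ref{property of NHIM}, while $z=\Upsilon_{c,l}(q_1)$ is $\tfrac12$-Hölder in $c$ by Lemma~\ref{local11}(1); composing, $\|\bp^{u}_{c}-\bp^{u}_{c'}\|_{C^{0}(\rN_{\kappa,l})}\le C\|c-c'\|^{1/2}$, so $\|\partial_q u^{-}_{c,l}(\cdot,0)-\partial_q u^{-}_{c',l}(\cdot,0)\|_{C^{0}(\rN_{\kappa,l})}\le C(\|c-c'\|^{1/2}+\|c-c'\|)$. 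Since $\rN_{\kappa,l}$ is connected with bounded diameter and $u^{-}_{c,l},u^{-}_{c',l}$ agree at the normalization point, integrating along a path gives the desired bound on $\rN_{\kappa,l}$. The same argument with $\imath=u$ in Lemma~\ref{local manifolds representation}, Lemma~\ref{local11}(2), and the observation that near $\Upsilon_{c,u}$ the solution $u^{-}_{c,l}$ equals a backward elementary weak KAM solution for class $u$ up to the additive constant $\Delta(c):=h^{\infty}_{c}(\Upsilon_{c,l},\Upsilon_{c,u})$, yields the same bound on $\rN_{\kappa,u}$, \emph{provided} $|\Delta(c)-\Delta(c')|\le C(\|c-c'\|^{1/2}+\|c-c'\|)$.

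\textbf{Steps 2--3: uniform return time and propagation.} Using the a priori compactness of minimizers, Lemma~\ref{minimal set on cylinder}, and the upper semi-continuity of $(c,L)\mapsto\tilde{\cN}(c,L)$ (Proposition~\ref{upper semi}(2)), I would first show there is an integer $T_0$, independent of $c\in\mathbb{S}$, such that every backward $u^{-}_{c,l}$-calibrated curve $\gamma$ with $\gamma(0)\in\check M\setminus\rN_{\kappa,u}$ has $\gamma(-T_0)\in\rN_\kappa$: such a $\gamma$ is $c$-semi-static, its $\alpha$-limit lies in $\tilde{\cA}(c)\subset\sL(\Sigma_{H,l}(0)\cup\Sigma_{H,u}(0))$ and hence projects into $\rN_\kappa$, and failure of uniformity would, by compactness and upper semi-continuity, produce a globally minimal curve for some $c_{*}\in\mathbb{S}$ avoiding $\rN_\kappa$ altogether, contradicting the location of its $\alpha$-limit. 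Then for $q\in\check M\setminus\rN_{\kappa,u}$, writing $x_c=\gamma(-T_0)\in\rN_\kappa$ and using that $u^{-}_{c,l}$ is calibrated along $\gamma$ while $u^{-}_{c',l}$ is dominated,
$$u^{-}_{c',l}(q,0)-u^{-}_{c,l}(q,0)\le\big(u^{-}_{c',l}(x_c,0)-u^{-}_{c,l}(x_c,0)\big)+\int_{-T_0}^{0}(\eta_c-\eta_{c'})(d\gamma)+(\alpha(c')-\alpha(c))T_0.$$
Taking $\eta_c=c_1\,dq_1$, the integral equals $(c_1-c_1')$ times the $q_1$-displacement of $\gamma$ over the fixed time $T_0$, hence is $O(\|c-c'\|)$; $\alpha$ is locally Lipschitz so the last term is $O(\|c-c'\|)$; and the bracket is controlled by Step 1 according to which component of $\rN_\kappa$ contains $x_c$. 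Exchanging $c$ and $c'$ and absorbing constants gives the theorem.

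\textbf{Main obstacle.} The hard part is not the propagation but controlling $u^{-}_{c,l}$ near the \emph{other} circle $\Upsilon_{c,u}$: the calibrated curve through a point of $\mathrm{U}_{\kappa,u}$ is backward asymptotic to $\Upsilon_{c,u}$, not to $\Upsilon_{c,l}$, so $x_c$ genuinely can land in $\rN_{\kappa,u}$, and then Step 1 there requires the $\tfrac12$-Hölder bound on $\Delta(c)=h^{\infty}_{c}(\Upsilon_{c,l},\Upsilon_{c,u})$ (equivalently, on $\min B_{c,l,u}$ up to the fixed normalizations, cf.\ Proposition~\ref{double description}). I expect to obtain this by comparing, for $c$ and $c'$, the minimizing heteroclinic configurations from $\Upsilon_{c,l}$ to $\Upsilon_{c,u}$: Lemma~\ref{local11} lets one move their two ends onto $\Upsilon_{c',l},\Upsilon_{c',u}$ at $\tfrac12$-Hölder cost, and the $C^{1,1}$ structure near both circles (Step 1) absorbs the resulting action error, while keeping the bookkeeping of the additive normalization constants — themselves $\tfrac12$-Hölder in $c$ by Step 1 — consistent across the two components of $\rN_\kappa$. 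This heteroclinic-comparison estimate, rather than any single ingredient above, is where the real work lies.
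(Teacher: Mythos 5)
Your Steps 1--3 reproduce the paper's argument: the $C^0$ bound on $\partial_q u^{-}_{c,l}$ over $\rN_{\kappa,l}$ from the $\tfrac12$-H\"older dependence of $\Upsilon_{c,l}$ (Lemma \ref{local11}) combined with the smooth dependence of the unstable leaves, integration from the normalization point, and then propagation along a backward calibrated curve using calibration for $c$, domination for $c'$, the bounded speed of minimizers and the Lipschitz continuity of $\alpha$. The divergence is in what you allow for the landing point $x_c=\gamma(-T_0)$. The paper uses that for \emph{every} $q\in\check M\setminus\rN_{\kappa,u}$ there is a backward $u^{-}_{c,l}$-calibrated curve that is asymptotic to $\Upsilon_{c,l}$ and whose time outside $\rN_{\kappa,l}$ is uniformly bounded, so $x_c\in\rN_{\kappa,l}$ and only the Step-1 estimate on that one component is ever needed. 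Your worry that for $q\in\rU_{\kappa,u}$ the calibrated curve is backward asymptotic to $\Upsilon_{c,u}$ does not occur here: in the double cover the $q_2$-circle has circumference $2$, so a point with $q_2\in[1+\kappa,2-\kappa]$ is reached from $\Upsilon_{c,l}$ (at $q_2=0\equiv 2$) along the branch of $W^{u}(\Upsilon_{c,l})$ that decreases from $q_2=2$, never passing near $\Upsilon_{c,u}$; the alternative route through $\cA_{c,u}$ costs at least the extra amount $h^{\infty}_{c}(\cA_{c,u},(q,0))$, which is bounded below by a positive constant depending on $\kappa$ for $q\notin\rN_{\kappa,u}$, and this gap persists for small perturbations by upper semi-continuity. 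This is precisely why the theorem excludes $\rN_{\kappa,u}$ from the domain of the $l$-estimate (and $\rN_{\kappa,l}$ from the $u$-estimate).

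As written, then, your proposal has a genuine gap exactly where you say "the real work lies": you never prove the $\tfrac12$-H\"older continuity in $c$ of $\Delta(c)=h^{\infty}_{c}(\cA_{c,l},\cA_{c,u})$, and your sketch of a heteroclinic-comparison argument is circular-looking (it invokes the $C^{1,1}$ control near \emph{both} circles for $u^{-}_{c,l}$, which is part of what you are trying to establish). The fix is not to prove that estimate but to delete the need for it: show, as above, that the calibrated curve of the \emph{elementary} solution $u^{-}_{c,l}$ from any $q\in\check M\setminus\rN_{\kappa,u}$ returns to $\rN_{\kappa,l}$ in uniformly bounded time, and then your propagation step closes the argument exactly as in the paper.
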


\begin{Rem}
By adding suitably constants, we can take $u^{\pm}_{c,l}(z_l,0)=u^{\pm}_{c,l}(z_u,0)=0$ for all $c\in\mathbb{S}$, since any elementary weak KAM solution  plus a constant is still an elementary weak KAM solution.
\end{Rem}

\begin{proof}
We only prove the case for $u^-_{c,l}$ and the others are similar. The normal hyperbolicity guarantees the smooth dependence of the unstable leaf $W_q^{u,loc}$ with respect to the base point $q\in\Sigma_{H,l}(0)$. By Lemma \ref{local11}, the local unstable manifold $W^{u, loc}_{\Upsilon_{c,l}}$ of $\Upsilon_{c,l}$ is also $\frac{1}{2}-$H\"{o}lder continuous in $c\in\mathbb{S}$. Then,  Lemma  \ref{local manifolds representation} implies that some constant $C_1>0$ exists  such that
\begin{equation*}
 \| \big(c+\partial_qu_{c,l}^-(q,0)\big)-\big(c'+\partial_qu_{c',l}^-(q,0)\big) \|\leq C_1\|c-c'\|^{\frac{1}{2}}, ~\forall q\in\rN_{\rro,l},  ~\forall c, c'\in\mathbb{S}.
\end{equation*}
Further, using integration we obtain that for all $c, c'\in\mathbb{S}$ and all $ q\in\rN_{\rro, l}$,
\begin{equation*}
\begin{split}
\big| \big( u^{-}_{c,l}(q,0)- u^{-}_{c,l}(z_l,0)+\langle c, q-z_l\rangle \big)-\big( u^-_{c',l}(q,0)-u^-_{c',l}(z_l,0)+\langle c', q-z_l\rangle \big)  \big|\leq C_1\|c-c'\|^{\frac{1}{2}}.
\end{split}
\end{equation*}
Since we have chosen $u^{-}_{c,l}(z_l,0)\equiv constant$ for all $c\in\mathbb{S}$, we get that $\forall$$c, c'\in\mathbb{S}$ and $\forall q\in\rN_{\rro, l}$
\begin{equation}\label{local regularity of sol}
\begin{split}
\big| u^{-}_{c,l}(q,0)- u^-_{c',l}(q,0) \big|\leq C_1\|c-c'\|^{\frac{1}{2}}+\|c-c'\|.
\end{split}
\end{equation}

Next, for each $z\in\check{M}\setminus \rN_{\rro,u}$, there exists a backward calibrated curve $\gamma^-_{c,l}$ with $\gamma_{c,l}^-(0)=z$, which is negatively asymptotic to $\pi_q\circ\Upsilon_{c,l}$. Since the duration of $\gamma^-_{c,l}$ staying outside  $\rN_{\rro,l}$ is uniformly bounded, denoted by $T_l\in\Z^+$, we have $\gamma^-_{c,l}(-k)\in \rN_{\rro,l}$ for every integer $k\geq T_{l}$. Then,
$$u^-_{c,l}(\gamma^-_{c,l}(0),0)-u^-_{c,l}(\gamma^-_{c,l}(-T_l),-T_l)=\int^0_{-T_l}L(\gamma^-_{c,l}(s),\dot{\gamma}^-_{c,l}(s),s)-\langle c, \dot{\gamma}^-_{c,l}(s)\rangle+\alpha(c)\, ds,$$
$$u^-_{c',l}(\gamma^-_{c,l}(0),0)-u^-_{c',l}(\gamma^-_{c,l}(-T_l),-T_l)\leq\int^0_{-T_l}L(\gamma^-_{c,l}(s),\dot{\gamma}^-_{c,l}(s),s)-\langle c', \dot{\gamma}^-_{c,l}(s)\rangle+\alpha(c')\,ds.$$
Subtracting the first formula from the second one, one deduces from inequality  \eqref{local regularity of sol} that 
\begin{equation*}
\begin{split}
u^-_{c',l}(z,0)-u^-_{c,l}(z,0)\leq & u^-_{c',l}(\gamma^-_{c,l}(-T_l),-T_l)-u^-_{c,l}(\gamma^-_{c,l}(-T_l),-T_l)\\
&+\int^0_{-T_l}\langle c-c', \dot{\gamma}^-_{c,l}(s)\rangle+\alpha(c')-\alpha(c)\, ds\\
\leq & u^-_{c',l}(\gamma^-_{c,l}(-T_l),0)-u^-_{c,l}(\gamma^-_{c,l}(-T_l),0)+C_2\|c'-c\|\\
\leq &C_1\|c'-c\|^{\frac 12}+\|c'-c\|+C_2\|c'-c\|.
\end{split}
\end{equation*}
Here, the second inequality follows from the  fact that $\|\dot{\gamma}^-_{c,l}\|$ is uniformly bounded  and Mather's $\alpha$-function  is Lipschitz continuous. So we conclude that there exists $C_h>0$ such that
$$u^-_{c',l}(z,0)-u^-_{c,l}(z,0)\leq C_h(\|c'-c\|^{\frac{1}{2}}+\|c'-c\|), ~\forall z\in\check{M}\setminus \rN_{\rro,u}.$$
In a similar way, we can prove that $u^-_{c,l}(z,0)-u^-_{c',l}(z,0)$$\leq C_h(\|c'-c\|^{\frac{1}{2}}+\|c'-c\|)$ for all $z\in\check{M}\setminus \rN_{\rro,u}$,
which completes the proof.
\end{proof}

\subsection{Choice of the Gevrey space }\label{determin of coeff}
In what follows, we assume  $\alpha>1$ and $M=\T^2$.  As we will see later, our proof of genericity  is not always valid for all Gevrey space $\bG^{\alpha,\bL}$ ($\bL>0$), but only for $\bG^{\alpha,\bL}$ with $\bL$ bounded by a positive constant $\bL_0$. This is caused by Gevrey approximation, and we will explain it and show how to choose $\bL_0$ below.

Let us first look at the unperturbed Lagrangian $L_0=l_1(v_1)+l_2(q_2,v_2)$ in \eqref{lagrangian}. For each $c=(c_1,\bfo)$, $|c_1|\leq R-1$, the  Aubry set and Ma\~n\'e set are 
$$\widetilde{\cA}_{L_0}(c,M)=\widetilde{\cN}_{L_0}(c,M)=\{(q_1, \bfo, D h_1(c_1), \bfo, t)\in TM\times\T: q_1\in\T, t\in\T\},$$
which is a $\phi_{L_0}^1$-invariant circle. Next, we work in the covering space $\check{M}$ and consider $L_0: T\check{M}\to\R$. Restricted on the time section $\{t=0\}$, the lift of the Aubry set has two copies:
\begin{equation*}
\begin{split}
\widetilde{\cA}_{L_0,l}(c,\check{M})|_{t=0}&=\{ (q_1, 0,D h_1(c_1),0)\in T\check{M}: q_1\in\T \},\\
\widetilde{\cA}_{L_0,u}(c,\check{M})|_{t=0}&=\{ (q_1, 1,D h_1(c_1),0)\in T\check{M}: q_1\in\T \},
\end{split}
\end{equation*}
and they lie on the following two invariant cylinders respectively
\begin{equation*}
\begin{split}
  \sL\Sigma_l(0)=&\{(q_1, 0, D h_1(p_1), \bfo)\in T\check{M}: q_1\in\T, |p_1|\leq R-1 \}, \\
  \sL\Sigma_u(0)=&\{(q_1, 1, D h_1(p_1), \bfo)\in T\check{M}: q_1\in\T, |p_1|\leq R-1 \}.
\end{split}
\end{equation*}
Notice that $\pi_q\circ\sL\Sigma_l(0)=\T\times\{0\}$ and $\pi_q\circ\sL\Sigma_u(0)=\T\times\{1\}$.
\begin{figure}[H]
    \centering
    \includegraphics[width=7.5cm]{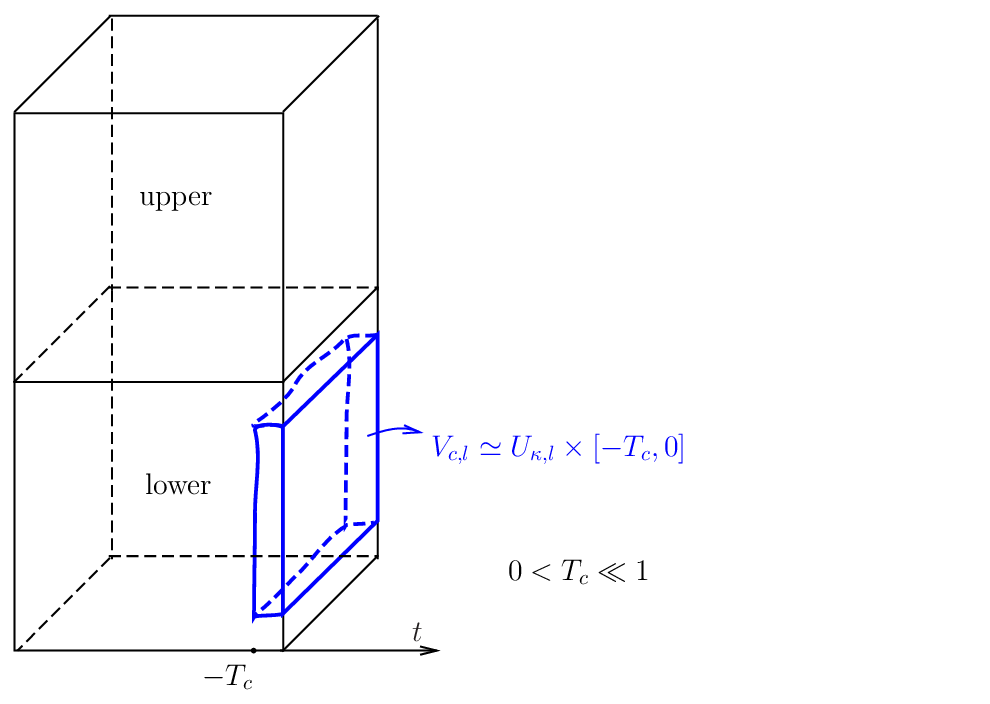}
    \caption{$V_{c,l}$ (blue) in a fundamental domain of $\check{M}\times\T$}\label{tubular}
\end{figure}

Denote by $u^\pm_{c,l,L_0}, u^\pm_{c,u,L_0}$ the elementary weak KAM solutions of $L_0$ with respect to the cohomology class $c$. Recall that $\kappa<\rro$.
For each $x\in \mathrm{U}_{\kappa,l}$, there exists a unique  $u^-_{c,l,L_0}$-calibrated curve $\xi^-_{x,c}(t): (-\infty, 0]\to\check{M}$  such that $\xi^-_{x,c}(0)=x$, and it is negatively asymptotic to $\cA_{L_0,l}(c)$. We pick and fix a constant $T_c=T_c(\kappa,L_0)>0$ small enough, then we obtain a local neighborhood $$V_{c,l}=\{(\xi_{x,c}^-(t),t)\in\check{M}\times\T: x\in \mathrm{U}_{\kappa,l}, -T_c\leq t\leq 0\}$$ which is diffeomorphic to $\mathrm{U}_{\kappa,l}\times [-T_c, 0]$ (see figure \ref{tubular}), namely there is a diffeomorphism $$f: \mathrm{U}_{\kappa,l}\times [-T_c, 0]\to V_{c,l}$$
such that $f(x,t)=(\xi^-_{x,c}(t),t)$ and $V_{c,l}\cap(\rN_{3\kappa/4,l}\times\T)=\emptyset$, this is guaranteed by $T_c\ll 1$. Notice that $V_{c,l}$ would vary in $c$.

Recall that $\check{M}=\T\times[0,2]/\sim$, where the equivalence relation $\sim$ is defined by identifying 0 with 2 in the $q_2$-coordinate. In the sequel, we will fix, once and for all, a sufficiently small constant $\delta>0$, which is smaller than $\kappa/4$. Thanks to Theorem \ref{Gevrey approx} there exists a Gevrey-$(\alpha, \lambda_c)$  diffeomorphism
$$\Psi_{c,l}: \mathrm{U}_{\kappa,l}\times [-T_c,0]\to \text{\uj V}_{c,l}$$  such that
 $\|\Psi_{c,l}-f\|_{C^0(\mathrm{U}_{\kappa,l}\times [-T_c,0])}\leq \delta/2$, where
$\text{\uj V}_{c,l}\subsetneq\T\times(0,1)\times\T$ and $\text{\uj V}_{c,l}\cap(\rN_{\kappa/2,l}\times\T)=\emptyset$,  $\lambda_c=\lambda_c(\kappa,L_0)\ll 1$.
It means  that $\Psi_{c,l}(x,\cdot)$ remains $\delta/2-$close to $\xi^-_{x,c}(\cdot)$ in the following sense:  $$\textup{dist}(\Psi_{c,l}(x,t), \xi^-_{x,c}(t))\leq\delta/2, \quad \forall~(x,t)\in\mathrm{U}_{\kappa,l}\times[-T_c, 0].$$
Recall that the number $\vep_1$ given in Lemma \ref{minimal set on cylinder} is small enough, then one can find an small interval $I_c=\{(c'_1,0): c'_1\in (c_1-\tau, c_1+\tau) \}$  depending on $\kappa, L_0$, such that if the perturbation term $\|L_1\|_{C^2}<2\vep_1$, then the Lagrangian $L=L_0+L_1$ satisfies: for each $c'\in I_c$, $x\in\rU_{\kappa,l}$,

$\bullet$ the $u^-_{c',l,L}$-calibrated  curve $\gamma^-_{x,c',L}(t):(-\infty,0]\to\check{M}$ with $\gamma^-_{x,c',L}(0)=x$  is negatively asymptotic to $\cA_{L,l}(c',\check{M})$.

$\bullet$ $\gamma^-_{x,c',L}(\cdot)$ is still $\delta-$close to $\Psi_{c,l}(x,\cdot)$ in the sense that
\begin{equation}\label{tubular approximation}
 \textup{dist}(\Psi_{c,l}(x,t),\gamma^-_{x,c',L}(t) )\leq \delta,\quad \forall -T_c\leq t\leq 0.
\end{equation}
These properties are guaranteed by the upper semi-continuity. By the finite covering theorem, there exist finitely many intervals $\{I_{c^i}\}_{i=0}^m$ such that
\begin{equation}\label{interval decomp}
\bigcup_{0\leq i\leq m} I_{c^i}\supset [-R+1,R-1]\times\{\bfo\},
\end{equation}
and the corresponding diffeomorphism $\Psi_{c^i,l}: \mathrm{U}_{\kappa,l}\times[-T_{c^i}, 0]\to \text{\uj V}_{c^i,l}$ is Gevrey-$(\alpha, \lambda_{c^i})$, and the positive number $T_{c^i}\ll 1$. According to Theorem \ref{inverse thm}, some constant  $\lambda'_{c^i}<\lambda_{c^i}$ exists such that $\Psi_{c^i,l}^{-1}$ is Gevrey-$(\alpha, \lambda'_{c^i})$ smooth.

In what follows, we set 
\begin{equation}\label{bold_L_0}
	\bL_0:=\min\{\lambda'_{c^i}: i=0,\cdots,m\},
\end{equation}
 and hence
$$\Psi_{c^i,l}^{-1}: \text{\uj V}_{c^i,l}\to\mathrm{U}_{\kappa,l}\times[-T_{c^i}, 0]$$
 is Gevrey-$(\alpha, \bL)$  smooth, for all $\bL\leq\bL_0$. We also point out that $\bL_0$ is independent of the perturbation Hamiltonian $H_1$, and it depends only on $H_0$, $R$ and $\alpha$ since the choice of $\kappa$ depends only on $H_0$ and $R$, and $L_0$ depends only on $H_0$. 

 Similarly, these procedures can be carried out for the region $\mathrm{U}_{\kappa,u}$, and one can get the corresponding Gevrey diffeomorphism $\Psi_{c^i,u}:\mathrm{U}_{\kappa,u}\times[-T_{c^i},0]\to\text{\uj V}_{c^i,u}$. For simplicity, we still assume the same interval covering $\bigcup_{i=0}^m I_{c^i}$ as \eqref{interval decomp} and  $\Psi_{c^i,u}^{-1}:$$\text{\uj V}_{c^i,u}\to\mathrm{U}_{\kappa,u}\times[ -T_{c^i},0]$ is Gevrey-$(\alpha, \bL)$ smooth for all $\bL\leq\bL_0$, where each $\Psi_{c^i,u}$ ($i=0,\cdots,m$) possesses the property analogous to \eqref{tubular approximation}.

\subsection{Total disconnectedness}\label{total discon}
Let $\alpha>1$, we will study the topological structure of the set of minimal points for 
\[B_{c,l,u}(x,\tau)=u^{-}_{c,l}(x,\tau)-u^{+}_{c,u}(x,\tau),\quad\text{and} \quad B_{c,u,l}(x,\tau)=u^{-}_{c,u}(x,\tau)-u^{+}_{c,l}(x,\tau)\] defined in \eqref{anotherkind barr}, where $u^{\pm}_{c,\imath}$  ($\imath=l, u$) are the elementary weak KAM solutions. Actually,
we will show that the minimal set is totally disconnected for generic Lagrangian systems.
Inspired by the technique in \cite[Section 4.2]{Ch2017}, we will perturb  directly  a Lagrangian by small potential functions. Compared with the perturbative techniques used in \cite{CY2004, CY2009} which  perturb the generating functions to create genericity,  our technique in the current paper provides more information, we can prove the genericity not only in the usual sense but also in the sense of Ma\~n\'e.

Let $L=L_0+L_1$ be our Lagrangian given in \eqref{lagrangian}, where $\|L_1\|_{C^2}<\vep_1$.
Recall the interval covering  $\bigcup_{0\leq i\leq m} I_{c^i}$ given in section \ref{determin of coeff}, one can always suppose that the length of each interval $I_{c^i}$ is less than 1. Then Theorem \ref{global regularity of elementary solutions} implies that for any $c, c'\in I_{c^i}\cap\mathbb{S}$ and  $q\in\mathrm{U}_{\kappa}=\mathrm{U}_{\kappa,l}\cup\mathrm{U}_{\kappa,u},$
\begin{equation}\label{simplicity of global regularity}
\begin{split}
 |u^{\pm}_{c,l}(q,0)-u^\pm_{c',l}(q,0)|\leq 2C_h\|c'-c\|^{\frac{1}{2}}, \qquad
 |u^{\pm}_{c,u}(q,0)-u^\pm_{c',u}(q,0)|\leq 2C_h\|c'-c\|^{\frac{1}{2}}.
\end{split}
\end{equation}

Fixing  $\bL\in(0, \bL_0]$ and $\vep_0\in (0,\vep_1)$, we consider the following set in  $\bG^{\alpha,\bL}(M\times\T)$ with $M=\T^2$:
\begin{equation}\label{space1}
\mathfrak{P}:=\{P\in\bG^{\alpha,\bL}(M\times\T): \|P\|_{\alpha,\bL}<\vep_0,~\text{supp}P\cap(\check{\pi}\rN_{\kappa/2}\times\T) =\emptyset\}.
\end{equation}
Then it is easily seen that each potential perturbation $P\in\mathfrak{P}$  to the Hamiltonian $H$ would not affect the NHIC since $\widetilde\Sigma_H\subset \check{\pi}\rN_{\kappa/2}\times\R^2\times\T$, see \eqref{c_t1}. We also point out that, by a natural extension, any function in $\bG^{\alpha,\bL}(M\times\T)$ can be viewed as a function defined on  $\check{M}\times\T$.  
\begin{The}\label{generic G2}
Let $\alpha>1$, $\bL\leq\bL_0$. There exists a residual set $\mathcal{W}$$\subset\mathfrak{P}$ such that for each Gevrey potential function $P\in\mathcal{W}$, the Lagrangian $L+P: T\check{M}\times\T\to\R$ satisfies: for each $c\in\mathbb{S}$, the sets
\[\arg\min B_{c,l,u}\big|_{\rU_{\kappa,l}\cup\rU_{\kappa,u}},\quad\arg\min B_{c,u,l}\big|_{\rU_{\kappa,l}\cup\rU_{\kappa,u}}\]
are both totally disconnected. Here, $\arg\min B_{c,l,u}|_{\rU_{\kappa,l}\cup\rU_{\kappa,u}}$ stands for $\arg\min B_{c,l,u}\bigcap(\rU_{\kappa,l}\cup\rU_{\kappa,u})$, and $\arg\min B_{c,u,l}|_{\rU_{\kappa,l}\cup\rU_{\kappa,u}}$ stands for $\arg\min B_{c,l,u}\bigcap(\rU_{\kappa,l}\cup\rU_{\kappa,u})$.
\end{The}
\begin{proof}	
For $c\in\mathbb{S}$, we first study the set $\arg\min B_{c,l,u}$ restricted on the region $\rU_{\kappa,l}\subset\check{M}\times\{t=0\}$. Let $\rro>0$ be the constant give in Lemma \ref{local manifolds representation}. Recall that we have  $\rro>\kappa$. Then,
to prove the total disconnectedness of  $\arg\min B_{c,l,u}\big|_{\rU_{\kappa,l}}$,  it is enough to verify that 
\begin{equation}\label{Tot_Disc1}
	\textsf{the  set~} \arg\min B_{c,l,u}\big|_{\bar{\rN}_{\rro,l}\setminus \rN_{\kappa,l}} \textsf{~is  totally  disconnected},
\end{equation}
where ${\bar{\rN}_{\rro,l}\setminus \rN_{\kappa,l}}$ $\subset$ $\rU_{\kappa,l}$ and $\bar{\rN}_{\rro,l}$ is the closure of  $\rN_{\rro,l}$. To explain this, we recall that, according to Proposition  \ref{double description},  a point $(x, 0)$ $\in$ $\arg\min B_{c,l,u}|_{\rU_{\kappa,l}}$ if and only if there exists a $c$-semi static curve $\gamma_{x, c}: \R\to\check{M}$ with $\gamma_{x, c}(0)=x$,
such that the orbit $\{\gamma_{x, c}(n): n\in\Z\}$ 
is negatively asymptotic to $\pi_q\circ \Upsilon_{c,l}$ and positively asymptotic to $\pi_q\circ \Upsilon_{c,u}$. Then, by letting $\kappa$ be suitably small if necessary, the orbit $\{\gamma_{x, c}(n): n\in\Z\}$ has to pass through the region ${\bar{\rN}_{\rro,l}\setminus \rN_{\kappa,l}}$ when it approaches $\pi_q\circ \Upsilon_{c,l}$, as a result of the normal hyperbolicity.  Consequently, in what follows, we only need to check  \eqref{Tot_Disc1}.

 We first focus on the subinterval $I_{c^0}$ in the interval covering  $\bigcup_{0\leq i\leq m} I_{c^i}$.
Let us pick a $2$-dim disk
$$D=\{(x_1, x_2, t)\in\check{M}\times\T:~t=0, ~ |x_1-x_{1,0}|\leq d, ~ |x_2-x_{2,0}|\leq d \}\subset\bar{\rN}_{\rro,l}\setminus \rN_{\kappa,l} $$ which  is centered at the point $(x_{1,0}, x_{2,0})$, and $d$ is small. We also set
$$D+d_1:=\{(x_1, x_2,t)\in\check{M}\times\T:~t=0, ~|x_1-x_{1,0}|\leq d+d_1, ~ |x_2-x_{2,0}|\leq d+d_1\}\subset \bar{\rN}_{\rro,l}\setminus \rN_{\kappa,l} $$
with $0<d_1\ll 1$ (see figure \ref{picture2}).
\begin{figure}[H]
  \centering
  \includegraphics[width=7.5cm]{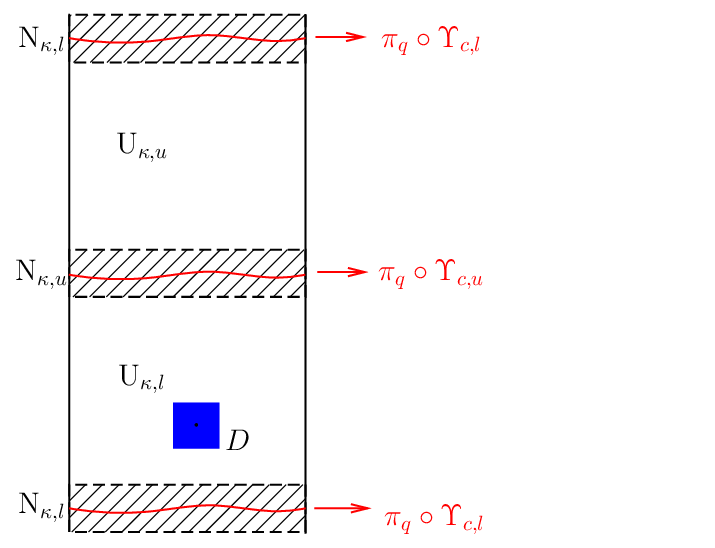}\\
  \caption{A fundamental domain of $\check{M}\times\{t=0\}$}\label{picture2}
\end{figure}
Let $\mu$ be suitably  small, for the index $i=1$ or $2$, we consider the following space
\begin{align}\label{Per_spaces}
\mathfrak{V}_{i}:=\bigg\{\mu\Big(\sum\limits_{\ell=1,2}a_{i,\ell}\cos2\ell\pi(x_i-x_{i,0}) +b_{i,\ell}\sin2\ell\pi(x_i-x_{i,0})\Big)~:~a_{i,\ell}, b_{i,\ell}\in[1,2]\bigg\}.
\end{align}
Obviously, $\mathfrak{V}_1, \mathfrak{V}_2\subset C^\omega(M)$. Next, we will construct perturbations  based on potential functions of the form  in $\mathfrak{V}_{i}$. To this end, we use the notation given in section \ref{determin of coeff}. Fixing a sufficiently large constant $\mathfrak{L}\gg\bL$, by Lemma \ref{Gevrey bumpfunction} one can construct a function $\rho(x,t)=g(x)\chi(t):\check{M}\times\T\to\R$ such that  $\chi:\T\to\R$ and $g(x):\check{M}\to\R$  are both  non-negative  Gevrey-$(\alpha, \mathfrak{L})$ functions. We choose
\begin{equation*}
\chi(t)=\left\{
  \begin{array}{ll}
   >0, &~t\in (-T_{c^0}, 0)\\
   =0, &~t\in\T\setminus(-T_{c^0}, 0)
  \end{array}
\right.
\end{equation*}
where  $ T_{c^0}\ll 1$ is given in section \ref{determin of coeff}, and require that  $g|_D\equiv1$ and supp$g\subset D+d_1$$\subset\bar{\rN}_{\rro,l}\setminus \rN_{\kappa,l} $. We set
$$\text{\uj C}:=\{ \Psi_{c^0,l}(x,t)~|~(x,t)\in (D+d_1)\times[-T_{c^0},0] \},$$
then $\text{\uj C}\subset\text{\uj V}_{c^0,l}\subsetneq \T\times(0, 1)\times\T$, and  therefore $\text{\uj C}\cap(\rN_{\kappa/2,l}\times\T)=\emptyset$.

$\bullet$ With each $V\in\mathfrak{V}_{1}$ or $\mathfrak{V}_{2}$, which can also be viewed as a function  on  $\check{M}$, one can define  $\widetilde{V}\in C^\infty(\check{M}\times\T)$ as follows: on the ``lower" domain $\T\times[0, 1]\times\T\subset\check{M}\times\T$,
\begin{equation*}
  \widetilde{V}(z)=\begin{cases}
  (\rho V)\circ\Psi_{c^0,l}^{-1}(z)=\rho(x,t)V(x), & ~\Psi_{c^0,l}(x,t)=z\in\text{\uj C},\\
 0, & z\in(\T\times[0, 1]\times\T)\setminus\text{\uj C}.
  \end{cases}
\end{equation*}
Then we extend symmetrically the function  to the ``upper" domain $\T\times[1, 2]\times\T$ such that $$\widetilde{V}(y,t)=\widetilde{V}(y-\mathbf{e}_2,t)$$
with $\mathbf{e}_2=(0, 1).$ The support of $\widetilde{V}$ satisfies
\begin{equation}\label{supp_2copies}
	\textup{supp}\widetilde{V}\subset \text{\uj C}\cup(\text{\uj C}+\mathbf{e}_2).
\end{equation}
Since  $\mathfrak{L}\gg\bL$, according to the properties (G\ref{algebra norm}), (G\ref{composition}) in Section \ref{introduction}, we have
\begin{equation}\label{construction of V}
\widetilde{V}\in\bG^{\alpha, \bL}(\check{M}\times\T).
\end{equation}

$\bullet$ We also remark that, by the symmetry of $\widetilde{V}\in\bG^{\alpha,\bL}(\check{M}\times\T)$, $\widetilde{V}$ can also be viewed as a function on $M\times\T$. By abuse of notation, we continue to write $\widetilde{V}\in\bG^{\alpha,\bL}(M\times\T)$. Thus, $\widetilde{V}\in \mathfrak{P}.$ As a result of the construction above, some constant $C_1>0$ exists such that
\begin{equation}\label{integration of V}
 \int_{-T_{c^0}}^{0}\widetilde{V}(\Psi_{c^0,l}(x,t))dt=V(x)\int^0_{-T_{c^0}}g(x)\chi(t) dt =V(x)\int^0_{-T_{c^0}}\chi(t) dt=C_1 V(x),\quad \text{for~} x\in D.
\end{equation}
Here, we have used the fact $g|_D\equiv1$.

Let $\Pi_i$,  $i=1,2$,  be the standard projection to the $i$-th coordinate of $\check{M}$. For the Lagrangian $L:T\check{M}\times\T\to\R$, we consider   two elementary weak KAM solutions $u^-_{c,l}(q,t)$, $u^+_{c,u}(q,t)$, and denote by $u^-_{c,l,\widetilde{V}}(q,t)$, $u^+_{c,u,\widetilde V}(q,t)$ the elementary weak KAM solutions of the perturbed Lagrangian  $ L+\widetilde V$. Then the following result holds:

\begin{Lem}\label{diameter compare}
There exists an open and dense set $\mathcal{U}_{D}\subset\mathfrak{P}$ (see \eqref{space1}) such that for each $\widetilde{V}\in\mathcal{U}_{D}$,
\begin{equation}\label{banjingxiao}
\Pi_i\left(\arg\min\big(u^-_{c,l,\widetilde V}(x,0)-u^+_{c,u,\widetilde V}(x,0)\big)\big|_{D}\right)\subsetneq [x_{i,0}-d,x_{i,0}+d],\quad \text{for all~} c\in I_{c^0}\cap\mathbb{S},
\end{equation}
where $ i=1, 2$.
\end{Lem}
\begin{proof}
We start with the perturbation $\widetilde{V}$ of the form \eqref{construction of V}, where $V\in\mathfrak{V}_{1}\cup \mathfrak{V}_{2}$.  Note that under such a potential perturbation, the cylinders $\Sigma_{H,l}(0)$ and $\Sigma_{H,u}(0)$ remain unchanged, and hence the Aubry set $\widetilde{\cA}_{L+\widetilde{V}}(c,\check{M})$ $=$ $\widetilde{\cA}_{L}(c,\check{M})$.

\textbf{Step 1:} For $c\in I_{c^0}\cap\mathbb{S}$, the projected Aubry set $\cA_{L}(c, \check{M})\subset \check{M}\times\T$ has two copies, denoted by $\cA_{L,l}(c, \check{M})$ and $\cA_{L, u}(c, \check{M})$.  Each set $\cA_{L,\imath}(c, \check{M})$, $\imath=l, u$ is diffeomorphic to $\T^2$ since $c\in \mathbb{S}$.
 For each $x\in D$ and each $u^+_{c,u}$-calibrated curve $\gamma^+_{x,c}(t):[0,+\infty)\to\check{M}$ with $\gamma^+_{x,c}(0)=x$,   the minimizing curve $(\gamma^+_{x,c}(t), t) :\R^+\longrightarrow\check{M}\times\T$ is positively asymptotic  to $\cA_{L, u}(c, \check{M})$. Now,  we claim that 
\begin{equation}\label{supp_nonintersection}
	\textup{supp}\widetilde{V}~\bigcap~ \big(\bigcup_{t>0}(\gamma^+_{x,c}(t),t)\big)=\emptyset,
\end{equation}
as long as $D$ is small enough.
 In fact, according to \eqref{supp_2copies}, 
 the support of $\tilde{V}$ has two  copies in the lower and upper region respectively and 
$\textup{supp}\widetilde{V}\subset \text{\uj C}\cup(\text{\uj C}+\mathbf{e}_2).
$
It is clear that the minimizing orbit $(\gamma^+_{x,c}(t),t)$ never intersects itself, then 
 $ \text{\uj C}\bigcap \big(\bigcup_{t>0}(\gamma^+_{x,c}(t),t)\big)=\emptyset$
 since $D$ is a small neighborhood of $x$.  Moreover,  observe that the sets $\cA_{L,l}(c, \check{M})$ and $\cA_{L, u}(c, \check{M})$, which are  diffeomorphic to $\T^2$, divide the $3$-dimensional configuration space $\check{M}\times\T$ into two connected components, then the minimizing curve 
$(\gamma^+_{x,c}(t), t) :\R^+\longrightarrow\check{M}\times\T$ always stays in the lower region, which means $ (\text{\uj C}+\mathbf{e}_2)$ $\bigcap$ $ \big(\bigcup_{t>0}(\gamma^+_{x,c}(t),t)\big)$ $=\emptyset$. This proves our claim \eqref{supp_nonintersection}. Consequently,  \begin{equation}\label{u_plus_equal}
	u^+_{c,u,\widetilde V}(x)=u^+_{c,u}(x),\quad \text{for all}~x\in D.
\end{equation}

But the function $u^-_{c,l,\widetilde V}$ would undergo a small perturbation. Indeed, for $x\in D$, we can take a $u^-_{c,l,\widetilde{V}}$-calibrated curve $\gamma^-_{x,c,\widetilde{V}}:(-\infty,0]\to\check{M}$ with $\gamma^-_{x,c,\widetilde{V}}(0)=x$, then for $ m\in\Z^+$,
\begin{equation}\label{dafd1}
	u^-_{c,l,\widetilde{V}}(\gamma^-_{x,c,\widetilde{V}}(0),0)-u^-_{c,l,\widetilde{V}}(\gamma^-_{x,c,\widetilde{V}}(-m),-m)= \int_{-m}^0 (L-\eta_c+\widetilde{V})(d\gamma^-_{x,c,\widetilde{V}}(t),t)+\alpha(c)\,dt,
	\end{equation}
For another perturbation $\widetilde{V}'$, we have
\begin{equation}\label{dafd2}
u^-_{c,l,\widetilde V'}(\gamma^-_{x,c,\widetilde{V}}(0),0)-u^-_{c,l,\widetilde V'}(\gamma^-_{x,c,\widetilde{V}}(-m),-m)\leq \int_{-m}^0(L-\eta_c+\widetilde V')(d\gamma^-_{x,c,\widetilde{V}}(t),t)+\alpha(c)\,dt.
\end{equation}
By normal hyperbolicity, there exists a uniform upper bound $T\in\Z^+$, $T>T_{c^0}$, such that the orbit $\{(\gamma^-_{c,l,\widetilde{V}}(-t), t)\}_{t\geq T}$ shall retreat into the small neighborhood $\rN_{\kappa/2,l}\times\T$.  As the supports of $\widetilde{V}$ and  $\widetilde{V}'$ have empty intersection with $\rN_{\kappa/2,l}\times\T$, we have $u^-_{c,l,\widetilde{V}}= u^-_{c,l,\widetilde V'}$ on $\rN_{\kappa/2,l}\times\T$. Then \eqref{dafd1}-\eqref{dafd2} imply that
$$u^-_{c,l,\widetilde V'}(x,0)-u^-_{c,l,\widetilde{V}}(x,0)\leq \int_{-T}^0 (\widetilde V'-\widetilde{V})(\gamma^-_{x,c,\widetilde{V}}(t),t)\,dt.$$
Conversely, we can prove similarly that
$$u^-_{c,l,\widetilde V'}(x,0)-u^-_{c,l,\widetilde{V}}(x,0)\geq \int_{-T}^0 (\widetilde V'-\widetilde{V})(\gamma^-_{x,c,\widetilde{V}'}(t),t)\,dt,$$
where $\gamma^-_{x,c,\widetilde V'}$ denotes the backward $u^-_{c,l,\widetilde V'}$-calibrated curve with $\gamma^-_{x,c,\widetilde V'}(0)=x$. Since $x$ lies in the region $D$ where $u^-_{c,l,\widetilde V}$ is differentiable (see Lemma \ref{local manifolds representation}), one has $\|\gamma^-_{x,c,\widetilde V'}(t)-\gamma^-_{x,c,\widetilde V}(t)\|\to 0$ as $\|\widetilde{V}' -\widetilde V\|\to 0$, which is guaranteed by the upper semi-continuity. Therefore, for $c\in I_{c^0}\cap\mathbb{S}$,
\begin{equation}\label{potential perturbation}
u^-_{c,l,\widetilde V'}(x,0)-u^-_{c,l,\widetilde V}(x,0)=\sK_{\widetilde{V},c}(\widetilde V'-\widetilde{V})(x)+\sR_c(\widetilde V'-\widetilde{V})(x),\quad x\in D,
\end{equation}
where the operator 
\begin{equation}\label{linear operator}
\sK_{\widetilde{V},c}(\widetilde V'-\widetilde{V})(x)=\int_{-T}^0(\widetilde V'-\widetilde{V})(\gamma_{x,c,\widetilde{V}}^-(t), t)\,dt,
\end{equation}
and the remainder  $$\sR_c(\widetilde V'-\widetilde V)=o(\| V'-V\|_{C^0})$$ since $V, V'\in\mathfrak{V}_{1}\cup \mathfrak{V}_{2}$ are linear combinations of trigonometric functions.

\textbf{Step 2:} Now, we claim that there exists an arbitrarily small perturbation $\widetilde{V}\in\mathfrak{P}$ of the form \eqref{construction of V}, such that
\begin{equation}\label{touying1}
\Pi_1\left(\arg\min\big(u^-_{c,l,\widetilde V}(x,0)-u^+_{c,u,\widetilde V}(x,0)\big)\big|_{D}\right)\subsetneq [x_{1,0}-d,x_{1,0}+d],\quad \text{for all~} c\in I_{c^0}\cap\mathbb{S}.
\end{equation}

To prove this claim, we construct a grid for the parameters $(a_{1,1},b_{1,1}, a_{1,2}, b_{1, 2})$ in $\mathfrak{V}_1$ by splitting the domain $[1,2]^4$ equally into a family of  4-dimensional cubes whose side length is  $\mu^2$, namely
$$\Delta a_{1,\ell}=\Delta b_{1,\ell}=\mu^{2},\quad \ell=1,2.$$
There are  as many as $[\mu^{-8}]$ cubes.

 In the sequel, we use the symbol $\textrm{Osc}_{x\in D}f$ to denote the oscillation of $f$, which describes the difference  between the supremum and infimum of $f$ on  $D$. 
 According to  \eqref{tubular approximation}, for each $c\in I_{c^0}\cap\mathbb{S}$ and $x\in D$, the backward $c$-semi static curve $\gamma^-_{x,c,\widetilde{V}}(t)$ will stay in the $\delta$-neighborhood of the curve $\Psi_{c^0,l}(t)$ for $t\in[-T_{c^0},0]$ provided that $\mu$ is small enough. Besides, since minimizing orbits have no  self intersections, by letting $D$ be suitably small if necessary,  the minimizing curve $\big(\gamma^-_{x,c,\widetilde{V}}(t), t\big):$ $[-T, -T_{c^0})$ $\longrightarrow$ $\check{M}\times\T$ does not intersect the support of $\widetilde{V}'-\widetilde{V}$. This is guaranteed by using arguments similar to the proof of \eqref{supp_nonintersection}. 
 Therefore,  \eqref{integration of V} and the above estimate imply that 
\begin{align}\label{approxi_del}
	\sK_{\widetilde{V},c}(\widetilde{V}'-\widetilde{V})(x)
=\int_{-T_{c^0}}^0   (\widetilde{V}'-\widetilde{V})(\Psi_{c^0,l}(x,t))\,dt+O(\mu\delta)= C_1 (V'(x)-V(x))+O(\mu\delta).
\end{align}
Then some constant $C_2>0$ exists such that
\begin{align}\label{zhendang1}
\text{Osc}_{x\in D}(\sK_{\widetilde{V}, c}(\widetilde V'-\widetilde V))>\frac{1}{4}C_1\text{Osc}_{x\in D}(V-V')>C_2\mu\Delta
\end{align}
where $\Delta=\max\{|a_{1,\ell}-a'_{1,\ell}|,|b_{1,\ell}-b'_{1,\ell}|:~\ell=1,2\}$. This is guaranteed by \eqref{approxi_del} and the fact that $V$ is a finitely linear combination of $\{\sin2\pi x_1,\cos2\pi x_1, \sin 4\pi x_1,\cos 4\pi x_1\}$.

Next, we split the interval $I_{c^0}$ equally into $[K_s\mu^{-6}]$ subintervals, where $K_s=L_s(\frac{24C_h}{C_2})^2$ and $L_s$ is the length of $I_{c^0}$, $C_h$ is the constant given in \eqref{simplicity of global regularity} and $C_2$ is  given in \eqref{zhendang1}. We pick up the subinterval that has non-empty intersection with $\mathbb{S}$, and then denote all such kinds of subintervals by $\{\text{\uj J}_i\}_{i\in\mathbb{J}}$. Clearly, the cardinality of $\mathbb{J}$ is less than $[K_s\mu^{-6}]$.

Let us fix a $c^*\in\text{\uj J}_i\cap\mathbb{S}$. If for some parameter $(a^*_{1,\ell}, b^*_{1,\ell}), \ell=1,2$  and its corresponding perturbation $V^*\in\mathfrak{V}_1$, formula \eqref{touying1} does not hold, then $\min_{x_2}\big( u^-_{c^*,l,\widetilde V^*}(x,0)-u^+_{c^*,u,\widetilde V^*}(x,0)\big)$ is identically equal to a constant for all $x\in D$, and hence
\begin{equation}\label{zhendang2}
\textup{Osc}_{x\in D}\min\limits_{x_2}\big( u^-_{c^*,l,\widetilde V^*}(x,0)-u^+_{c^*,u,\widetilde V^*}(x,0)\big)=0.
\end{equation}
Next, for another $V'=\mu\Big(\sum_{\ell=1,2}a'_{1,\ell}\cos2\ell\pi(x_1-x_{1,0}) +b'_{1,\ell}\sin2\ell\pi(x_1-x_{1,0})\Big)\in\mathfrak{V}_1$
and the corresponding perturbation $\widetilde {V}'$, it follows from \eqref{u_plus_equal} and \eqref{potential perturbation} that for all $c\in \text{\uj J}_i\cap\mathbb{S}$ and $x\in D$,
\begin{equation}
\begin{split}
& \big(u^-_{c,l,\widetilde V'}(x,0)-u^+_{c,u,\widetilde V'}(x,0)\big)-\big( u^-_{c^*,l, \widetilde V^*}(x,0)-u^+_{c^*,u,\widetilde V^*}(x,0)\big)\\
=&\big(u^-_{c,l,\widetilde V'}(x,0)-u^-_{c^*,l,\widetilde V'}(x,0)\big)-\big(u^+_{c,u,\widetilde V'}(x,0)-u^+_{c^*,u,\widetilde V'}(x,0)\big) +\big(\sK_{\widetilde{V}^*, c^*}+\sR_{c^*}\big)(\widetilde V'-\widetilde V^*)(x).
\end{split}
\end{equation}
As the length of  $\text{\uj J}_i$ is $\frac{L_s}{[K_s\mu^-6]}$ and $c,c^*\in\text{\uj J}_i\cap\mathbb{S}$, formula  \eqref{simplicity of global regularity} then implies  that
\begin{equation}
\begin{split}
&\Big|\big(u^-_{c,l,\widetilde V'}(x,0)-u^-_{c^*,l,\widetilde V'}(x,0)\big)-\big(u^+_{c,u,\widetilde V'}(x,0)-u^+_{c^*,u,\widetilde V'}(x,0)\Big)\big|\\
\leq & 4C_h\|c-c^*\|^{\frac{1}{2}}
\leq 4C_h\Big(\frac{L_s}{[K_s\mu^{-6}]}\Big)^{\frac{1}{2}}\leq\frac{C_2\mu^3}{6}.
\end{split}
\end{equation}
Since $\mu\ll 1$,  one has $\|\widetilde V'-\widetilde V^*\|\ll 1$ and
\begin{equation}\label{frac13}
\|\sR_{c^*}(\widetilde V'-\widetilde V^*)\|\leq\frac{1}{6}\|\sK_{\widetilde{V}^*,c^*}(\widetilde V'-\widetilde V^*)\|.
\end{equation}
Regarding the potential function $V'$, if its parameter $(a'_{1,\ell}, b'_{1,\ell}), \ell=1, 2$  satisfies
\begin{equation}\label{relation of coefficients}
\max\{|a^*_{1,\ell}-a'_{1,\ell}|, |b^*_{1,\ell}-a'_{1,\ell}| : \ell=1,2\}\geq \mu^2,
\end{equation}
then inequalities \eqref{zhendang1}--\eqref{frac13} together give rise to
$$\textup{Osc}_{x\in D}\min\limits_{x_2}\big(u^-_{c,l,\widetilde V'}(x,0)-u^+_{c,u,\widetilde V'}(x,0)\big)\geq\frac{C_2}{3}\mu^3>0.$$
Thus we can conclude that for each $c\in\text{\uj J}_i\cap\mathbb{S}$ and $V'\in\mathfrak{V}_1$ satisfying \eqref{relation of coefficients}, we have
\begin{equation}\label{reduce}
\begin{split}
\textup{Osc}_{x\in D}\min\limits_{x_2}\big(u^-_{c,l,\widetilde V'}(x,0)-u^+_{c,u,\widetilde V'}(x,0)\big)>0.
\end{split}
\end{equation}
Consequently, for each  $\text{\uj J}_i$, we only need to cancel out at most $2^4$ cubes from the grid  $\{\Delta a_{1,\ell}, \Delta b_{1,\ell}: \ell=1, 2\}$ so that formula \eqref{reduce} holds for all other cubes. Let the index $i$ range over $\mathbb{J}$, we therefore obtain a set  $\text{\uj P}_1\subseteq\{(a_{1,1}, a_{1,2}, b_{1,1}, b_{1,2}): a_{1,\ell}, b_{1,\ell}\in[1, 2], \ell=1, 2\}$ with Lebesgue measure
$$\textup{meas}\text{\uj P}_1\geq1-2^4(\mu^2)^4|\mathbb{J}|\geq1-2^4K_s\mu^2>0,$$
such that formula \eqref{reduce} holds for any $V'$ with parameter in $\text{\uj P}_1$ and any $c\in I_{c^0}\cap\mathbb{S}$.  As $\mu$ is small enough,  the claim \eqref{touying1} is now evident from what we have proved.

\textbf{Step 3:} 
Actually, the arguments above in Step 2 also ensure  that  formula \eqref{touying1} has density in $\mathfrak{P}$. The openness is obvious, so there is an open-dense set $\mathcal{U}_{D,1}$ in $\mathfrak{P}$ such that formula \eqref{touying1} holds for each perturbed Lagrangian $L+\tilde{V}$ with  $\widetilde{V}\in\mathcal{U}_{D,1}$.

Analogously,  we can consider a potential function $V\in\mathfrak{V}_2$ and its associated perturbation $\widetilde{V}$. By repeating similar arguments as in Step 2, we  also obtain an open-dense set $\mathcal{U}_{D,2}\subset\mathfrak{P}$, such that for each  perturbed Lagrangian $L+\tilde{V}$ where $\widetilde{V}\in\mathcal{U}_{D,2}$,
\begin{equation*}
\Pi_2\left(\arg\min\big(u^-_{c,l,\widetilde V}(x,0)-u^+_{c,u,\widetilde V}(x,0)\big)\big|_{D}\right)\subsetneq [x_{2,0}-d,x_{2,0}+d],\quad \text{for all~} c\in I_{c^0}\cap\mathbb{S}.
\end{equation*}
Thus, the proof of Lemma \ref{diameter compare} is now completed by taking a set $\mathcal{U}_D=\mathcal{U}_{D,1}\cap \mathcal{U}_{D,2}$.
\end{proof}

Now we continue to prove Theorem \ref{generic G2}.

$\bullet$ From Lemma \ref{diameter compare} we see that for each small disk $D\subseteq \bar{\rN}_{\rro,l}\setminus \rN_{\kappa,l}$, there exists an open-dense set $\mathcal{U}_D\subset\mathfrak{P}$  such that formula \eqref{banjingxiao} holds for each Lagrangian $L+\widetilde V$  with $\widetilde V\in\mathcal{U}_D$. Next, we take a countable topology basis $\bigcup_j D_j$ for $\bar{\rN}_{\rro,l}\setminus \rN_{\kappa,l}$ where the diameter of $D_j$ approaches to $0$ as $j\to\infty$, and therefore obtain an open-dense set $\mathcal{U}_{D_j}$ for each $j$. Clearly, $\mathcal{U}_{I_{c^0}}=\bigcap_j\mathcal{U}_{D_j}$ is a residual set in $\mathfrak{P}$, and  the set $\arg\min\big(u^-_{c,l,P}(x,0)-u^+_{c,u,P}(x,0)\big)\big|_{\bar{\rN}_{\rro,l}\setminus \rN_{\kappa,l}}$
is totally disconnected for each $P\in\mathcal{U}_{I_{c^0}}$ and $c\in\mathbb{S}\cap I_{c^0}$.

The technique above also works for other subintervals $I_{c^i}, i=1,\cdots,m$, we can then obtain the corresponding residual sets $\mathcal{U}_{I_{c^i}}, i=1,\cdots,m$.  Hence the intersection  $\mathcal{U}_l=\bigcap_{i=0}^m\mathcal{U}_{I_{c^i}}$ is residual, and 
$\arg\min\big(u^-_{c,l,P}(x,0)-u^+_{c,u,P}(x,0)\big)\big|_{\bar{\rN}_{\rro,l}\setminus \rN_{\kappa,l}}$
is totally disconnected for each $P\in\mathcal{U}_l$  and $c\in\mathbb{S}$. By what have shown at the beginning of the proof, this is equivalent to saying that $\arg\min\big(u^-_{c,l,P}(x,0)-u^+_{c,u,P}(x,0)\big)\big|_{\rU_{\kappa, l}}$
is totally disconnected for each $P\in\mathcal{U}_l$  and $c\in\mathbb{S}$.

$\bullet$ Similarly, one can prove that there exists a residual set $\mathcal{U}_u\subset\mathfrak{P}$, such that  the set $$\arg\min\big(u^-_{c,l,P}(x,0)-u^+_{c,u,P}(x,0)\big)\big|_{\rU_{\kappa,u}}$$
is totally disconnected for each $P\in\mathcal{U}_u$ and $c\in\mathbb{S}$.

$\bullet$ Conversely, by applying the  technique  above to $u^-_{c,u,P}(x,0)-u^+_{c,l,P}(x,0)$, we can also obtain two residual sets $\mathcal{V}_l$ and $\mathcal{V}_u$ in $\mathfrak{P}$, such that the set
$\arg\min\big(u^-_{c,u,P}(x,0)-u^+_{c,l,P}(x,0)\big)\big|_{\rU_{\kappa,l}}$ is totally disconnected for each $c\in\mathbb{S}$ and $P\in\mathcal{V}_l$, and the set $\arg\min\big(u^-_{c,u,P}(x,0)-u^+_{c,l,P}(x,0)\big)\big|_{\rU_{\kappa,u}}$ is totally disconnected for each $ c\in\mathbb{S}$ and $P\in\mathcal{V}_u$.

Therefore,  the proof of Theorem \ref{generic G2} is now completed by taking $\mathcal{W}=\mathcal{U}_l\cap\mathcal{U}_u\cap\mathcal{V}_l\cap\mathcal{V}_u$. 
\end{proof}

\subsection{Proof of Theorem \ref{main theorem}  and \ref{main thm2}}
Now, we are able to prove our main results. Let $R>1$, $\alpha>1$ and $0<\bL\leq\bL_0=\bL_0(H_0,\alpha,R)$, where the constant $\bL_0$ is given in \eqref{bold_L_0} and independent of $H_1$.
\begin{proof}
In our problem $M=\T^2$, $s>0$, $y_\ell\in[-R+1,R-1]\times\{\bfo\}$, $\ell\in\{1,\cdots,k\}$. Let $\vep_0$ be a small positive number satisfying 
$$\vep_0<\min\{1,\bL^{\alpha},\frac{\bL^{2\alpha}}{2!^\alpha},\frac{\bL^{3\alpha}}{3!^\alpha}\}\vep_1,$$
 where $\vep_1$ is chosen as in Lemma \ref{minimal set on cylinder}. Now,  $\|H_1\|_{\alpha,\bL}<\vep_0$  implies  $\|H_1\|_{C^3}<\vep_1$, and hence the  Hamiltonian $H=H_0+H_1$ has a persistent normally hyperbolic invariant cylinder (NHIC), and the globally minimal set $\widetilde{\cG}_L(c)$ lies in this NHIC  for each $c=(c_1,\bfo)$ with $|c_1|\leq R-1$. 
Here,  $L=L_0+L_1$ is the Lagrangian associated to $H=H_0+H_1$.
 Also, observe that for the Ma\~n\'e set $\widetilde{\cN}_{L_0}(c)$, 
  \begin{equation}\label{proj_N_L0}
 	\pi_p\circ\sL^{-1}\widetilde{\cN}_{L_0}(c)=c, \quad\text{~for each~} c=(c_1,\bfo), |c_1|\leq R-1.
 \end{equation}
 By letting $\vep_0$ be small enough,  the perturbation term $L_1$ is also sufficiently small, then the upper semi-continuity (see Proposition \ref{upper semi}) implies that for each $c=(c_1,\bfo)$, $|c_1|\leq R-1$,
\begin{equation}\label{dist_N_L_and_N_L0}
 d(\pi_p\circ\sL^{-1}\widetilde{\cN}_{L}(c),\pi_p\circ\sL^{-1}\widetilde{\cN}_{L_0}(c))< s/2, \quad\text{whenever~} \|H_1\|_{\alpha,\bL}<\vep_0
\end{equation}
 With this fact, one can find that 
$\vep_0=\vep_0(H_0,\alpha,R,s,\bL)>0$
 depends on the Hamiltonian $H_0$ and the constants $\alpha$, $R$, $\bL$ and $s$.

\textbf{Density:} For each Hamiltonian $H_0+H_1$ with $\|H_1\|_{\alpha,\bL}<\vep_0,$ we will prove that there exists an arbitrarily small perturbation $V\in\bG^{\alpha,\bL}(M\times\T)$ such that $\|H_1+V\|_{\alpha,\bL}<\vep_0$, and the perturbed Hamiltonian $H_0+H_1+V$  has an orbit $(q(t),p(t))$ and times $t_1<\cdots<t_k$ such that the action variables $p(t)$ pass through the ball $B_s(y_\ell)$ at the time $t=t_\ell$. To this end, we will  establish  a generalized transition chain along which one is able to apply Theorem \ref{generalized transition thm}.

 Let $d\in(0, \vep_0-\|H_1\|_{\alpha,\bL})$ be arbitrarily small.

$\bullet$ First, by applying the genericity property in Corollary \ref{corgeneric G1} to the Tonelli Lagrangian $L_0+L_1$, one can always choose a small perturbation 
\[\phi\in\bG^{\alpha,\bL}(M\times\T), \quad \|\phi\|_{\alpha,\bL}<\frac{d}{2}\]
such that for each rational homology class $h=(\frac{p}{q},\bfo)$, the perturbed  Lagrangian  $L_0+L_1+\phi$  has only one minimal measure with the rotation vector $h$.  Next, for the irrational case, it is well known in Aubry-Mather theory that for homology $h=(h_1,\bfo)$ with $h_1\in\R\setminus\Q$, only one minimal measure with the rotation vector $h$ exists.   Thus it is easily seen that
 the Aubry class is unique for each $c=(c_1,0)$ with $|c_1|\leq R-1$, as a result of property \eqref{ssff}. Then the uniqueness of Aubry class implies 
\begin{equation}\label{faf}
	\widetilde{\cA}_{L_0+L_1+\phi}(c)=\widetilde{\cN}_{L_0+L_1+\phi}(c).
\end{equation}
By the Legendre transformation, the associated Hamiltonian is exactly $H_0+H_1-\phi$ with $\|H_1-\phi\|_{\alpha,\bL}<\vep_0$, so  \eqref{dist_N_L_and_N_L0} and \eqref{faf} imply that
 \begin{equation}\label{proj_A_L0}
 	\pi_p\circ\sL^{-1}\widetilde{\cA}_{L_0}(c)=c, \quad  d\left(\pi_p\circ\sL^{-1}\widetilde{\cA}_{L_0+L_1+\phi}(c),~\pi_p\circ\sL^{-1}\widetilde{\cA}_{L_0}(c)\right)< s/2, 
 \end{equation}
for each $c=(c_1,\bfo)$, $|c_1|\leq R-1$.

Recall that the set $\widetilde{\cN}_{L_0+L_1+\phi}(c)\big|_{t=0}$ lies in the NHIC, so it is either homologically trivial or not. In the homologically trivial case,  it is well known that the $c$-equivalence holds inside $(c_1-\delta_c,c_1+\delta_c)\times\{\bfo\}$ for some small  $\delta_c>0$, which satisfies condition (1) in Definition \ref{transition chain}. 

In the latter case, $\widetilde{\cN}_{L_0+L_1+\phi}(c)\big|_{t=0}$ must be an invariant curve as a result of \eqref{faf}. Then we define as \eqref{buianquandeshangtongdiao} the set$$\mathbb{S}:=\{ (c_1,\bfo) ~:~|c_1|\leq R-1, \Upsilon_c\textup{~is an invariant circle on the NHIC}\}.$$
Applying Theorem \ref{generic G2} to $L_0+L_1+\phi$, we can find a small potential perturbation with compact support
\[P\in\bG^{\alpha,\bL}(M\times\T),\quad  \|P\|_{\alpha,\bL}<\frac{d}{2},\]
 such that the Lagrangian $L_0+L_1+\phi+P: T\check{M}\times\T\to\R$, defined in the double covering space, satisfies: for all  $c\in\mathbb{S}$, the sets
$$\arg\min B_{c,l,u}\big|_{\rU_{\kappa,l}\cup\rU_{\kappa,u}},\quad\arg\min B_{c,u,l}\big|_{\rU_{\kappa,l}\cup\rU_{\kappa,u}}$$
are both totally disconnected. Then Proposition \ref{manejifenlei} and  \ref{double description} together yield that for each $c\in\mathbb{S}$, there exists a small $\delta_c>0$ such that the set
$$\check{\pi}\cN(c,\check{M})\big|_{t=0}\setminus(\cA(c,M)\big|_{t=0}+\delta_c)$$
is totally disconnected, which satisfies condition (2) in Definition \ref{transition chain}.  Consequently, the corresponding Hamiltonian is exactly $H_0+H_1-\phi-P$ where $\|\phi+P\|_{\alpha,\bL}<d$ and $\|H_1-\phi-P\|_{\alpha,\bL}<\vep_0$.

$\bullet$ Next, we take \[V=-\phi-P.\]
Then the arguments above imply that there exists a generalized transition chain inside $[-R+1,R-1]\times\{\bfo\}$$\subset H^1(M,\R)$ for the Lagrangian $L_0+L_1-V$.  Thus, we conclude from  Theorem \ref{generalized transition thm} and \eqref{proj_A_L0} that the perturbed Hamiltonian $H_0+H_1+V$ has an orbit $(q(t),p(t))$  whose action variables $p(t)$ pass through the ball $B_s(y_\ell)$ at the time $t=t_\ell$, where $t_1<t_2<\cdots<t_k$.
Finally,  thanks to $\|V\|_{\alpha,\bL}<d$ and the arbitrariness of $d$, we complete the proof of density in $\fB^{\bL}_{\vep_0,R}$.

\textbf{Openness:} It only remains to verify the openness. Since the time for the aforementioned trajectory $(q(t),p(t))$ passing through the balls $B_s(y_1)$, $\cdots$, $B_s(y_k)$ is finite, the smooth dependence of solutions of ODEs on parameters can guarantee the openness in $\fB^{\bL}_{\vep_0,R}$.   Theorem \ref{main theorem} is now evident from what we have proved.

Notice that in the proof of density part above, the perturbation  we constructed is Gevrey potential function. Combining with the obvious openness property, Theorem \ref{main thm2} is also true.
\end{proof}

\appendix
\section{Normally hyperbolic theory}\label{appendix_NHIC}
In this appendix, we  review some classical results in the theory  of normally hyperbolic
 manifolds. We only give a less general introduction which is better applied to our problem, and refer the reader to \cite{Fen1971,Fen1977,HPS1977,Pesin04} for the proofs and more detailed introductions.
\begin{Def}\label{Def NHIM}
Let $M$ be a smooth Riemannian manifold and $f: M\to M$  be a $C^r (r\geq 1)$ diffeomorphism. Let $N\subset M$ be a submanifold (probably with boundary) which is invariant under $f$. Then $N$ is called a normally hyperbolic invariant manifold (NHIM) if there is an $f$-invariant tangent bundle
splitting such that, for every $x\in N$
$$T_xM=T_xN\oplus E_x^s\oplus E_x^u,$$
and there exist a constant $C>0$, rates $0<\lambda<1<\mu$ with $\lambda\mu<1$ such that
\begin{equation}\label{hyp splitting}
\begin{split}
    v\in T_xN & \Longleftrightarrow \|Df^k(x)v\|\leq C\mu^{|k|}\|v\| ,  \quad k\in\Z,\\
    v\in E^s_x  & \Longleftrightarrow \|Df^k(x)v\|\leq C\lambda^k\|v\|,  \quad k\geq0,\\
    v\in E^u_x & \Longleftrightarrow \|Df^k(x)v\|\leq C\lambda^{|k|}\|v\|,  \quad k\leq0.\\
\end{split}
\end{equation}
\end{Def}

\medskip

In what follows,  $N$ is assumed to be compact and connected. 
Let $U$ be a tubular neighborhood of the NHIM $N$. In both \cite{Fen1971}  and \cite{HPS1977} the existence of local stable and unstable manifolds
in $U$, denoted by  $W_N^{s,loc}$ and $W_N^{u,loc}$ respectively, are obtained by using the method of Hadamards graph transform. Moreover, the local stable and unstable manifolds can be characterized  as follows:
\begin{equation}\label{definition of local stable}
  \begin{split}
     W_N^{s,loc} &=\{y\in U ~|~ \textup{dist}(f^k(y),N)\leq \tilde{C}_{y}(\lambda+\tilde{\vep})^k, \textup{~for all~} k\geq0 \}, \\
     W_N^{u,loc} &=\{y\in U ~|~  \textup{dist}(f^k(y),N)\leq  \tilde{C}_{y}(\lambda+\tilde{\vep})^{|k|}, \textup{~for all~} k\leq0 \},
   \end{split}
\end{equation}
where the constant $\tilde{C}_{y}>0$,  and $\tilde{\vep}>0$ is a small constant satisfying $\lambda+\tilde{\vep}<1/\mu$.
For each $x\in N$, the corresponding local stable and unstable leaves are defined as follows:
\begin{equation}\label{definition of local stable leaf}
  \begin{split}
     W_x^{s,loc} &=\{y\in U ~|~  \textup{dist}(f^k(x),f^k(y))\leq  \tilde{C}_{x,y}(\lambda+\tilde{\vep})^k, \textup{~for all~} k\geq0 \}, \\
     W_x^{u,loc} &=\{y\in U ~|~  \textup{dist}(f^k(x),f^k(y))\leq  \tilde{C}_{x,y}(\lambda+\tilde{\vep})^{|k|}, \textup{~for all~} k\leq0 \}.
   \end{split}
\end{equation}
where $\tilde{C}_{x,y}>0$ is a constant.
Then we have the following properties (see \cite{HPS1977}):
\begin{The}\label{property of NHIM}
Let $N$ be a  NHIM given in \eqref{Def NHIM}, and we define an integer $l:=\max\{k\in\Z~|~ 1\leq k\leq r$ $\text{~and~} k<\frac{|\log\lambda|}{\log\mu}\}$, then
\begin{enumerate}[\rm(1)]
\item  $N$, $W_N^{s,loc}$ and $W_N^{u,loc}$ are $C^{l}$ manifolds. For each $x\in N$, the manifolds $W_x^{s,loc}$ and $W_x^{u,loc}$ are $C^{r}$ and $T_xW_x^{s,loc}=E_x^s$, $T_xW_x^{u,loc}=E_x^u$.
\item  $W_N^{s,loc}$ and $W_N^{u,loc}$ are foliated by the stable and unstable leaves respectively, i.e. 
\[W_N^{s,loc}=\bigcup_{x\in N} W_x^{s,loc}, \quad W_N^{u,loc}=\bigcup_{x\in N} W_x^{u,loc}.\]
 Moreover, if $x\neq x'$, then $W_x^{s,loc}\bigcap W_{x'}^{s,loc}=\emptyset$ and $W_x^{u,loc}\bigcap W_{x'}^{u,loc}=\emptyset.$
  \item The unstable foliation $\{W^{u, loc}_x : x\in N\}$ is $C^l$ in the sense that $\bigcup_{x\in N}T^k_x W^{u, loc}_x$ is a continuous bundle for each $1\leq k\leq l$, where $T^k$ denotes the $k$-th order tangent. Analogous result holds for the stable foliation.
\end{enumerate}
\end{The}

\begin{Rem}
\rm{(1)}:  We point out that for the models studied in the current paper, we have smoothness as high as the time-$1$ map since the dynamics on $N$ is close to integrable.\\
\rm{(2)}: We can also define the global stable (unstable) sets $W_N^{s,u}$ and  $W_x^{s,u}$, just by replacing $U$ with $M$ in \eqref{definition of local stable}, \eqref{definition of local stable leaf}. But $W_N^{s,u}, W_x^{s,u}$ may fail to be embedded manifolds.
\end{Rem}

The normal hyperbolicity has stability under perturbations. Roughly speaking, the normally hyperbolic invariant manifold  persists under small perturbations.
\begin{The}[Persistence of normally hyperbolic invariant manifolds]\label{persistence}
Suppose that $N\subset M$ is a NHIM for the $C^r$ $(r\geq 1)$ diffeomorphism $f$ and $\vep>0$ is sufficiently small. Then for any $C^r$  diffeomorphism $f_\vep: M\to M$ satisfying $\|f_\vep-f\|_{C^1}<\vep$, there exists a NHIM $N_\vep$ that is $C^l$ diffeomorphic and close to $N$
 where $l=\max\{k : 1\leq k\leq r$ $\text{~and~} k<\frac{|\log\lambda|}{\log\mu}\}$. Moreover, the local stable manifold $W^{s,loc}_{N_\vep}$ and local unstable manifold $W^{u,loc}_{N_\vep}$ are $C^l$ close to those of $N$.
\end{The}

\section{Variational construction of global connecting orbits}\label{sec_proof_of_connectingthm}
The goal of this section is to prove Theorem \ref{generalized transition thm}, which could be achieved by modifying the arguments and techniques in \cite{CY2009}. We also refer the reader to  \cite{Ch2018} or \cite{Ch2012} for more details. Throughout this section, we assume $M=\T^n$. Our diffusing orbits are constructed by shadowing a sequence of local connecting orbits, along each of them the Lagrangian action attains a ``local minimum". 
\subsection{Local connecting orbits}\label{sec localconnect}
Let $d\gamma(t)=(\gamma(t), \dot{\gamma}(t))$.
An orbit $(d\gamma(t),t):\R\to TM\times\T$ is said to connect one Aubry set $\widetilde{\cA}(c)$ to another one $\widetilde{\cA}(c')$ if the $\alpha$-limit set of the orbit  is contained in $\widetilde{\cA}(c)$ and the $\omega$-limit set is contained in $\widetilde{\cA}(c')$. 
We will introduce two types of local connecting orbits: type-$c$ and type-$h$, the former corresponds to Mather's  cohomology equivalence, while the latter corresponds to the variational interpretation of Arnold's mechanism. Before that, we need some preparations. 
\subsubsection{Time-step Lagrangian and upper semi-continuity}
Both types of local connecting orbits depend on the upper semi-continuity of minimal curves of a modified $C^r$ Lagrangian $L^*:T\T^n\times\R\rightarrow\R$ which is defined as follows: let $L^+,L^-$ be two time-1 periodic Tonelli Lagrangians,
\begin{equation*}
  L^*(\cdot,t):=\begin{cases}
  L^-(\cdot,t), & ~t\in(-\infty,0]\\
  L^+(\cdot,t), & ~t\in[1,+\infty),
  \end{cases}
\end{equation*}
and $L^*$ is superlinear and positive definite in the fibers.
Notice that $L^*$ is not periodic in time $t$, instead, it is periodic when restricted on either $(-\infty,0]$ or  $[1,+\infty)$. We call such a modified Lagrangian $L^*$ a \emph{time-step} Lagrangian.

For a \emph{time-step} Lagrangian $L^*$, a curve $\gamma:\R\rightarrow\T^n$ is called minimal if for any $t<t^\prime\in\R$,
\begin{equation*}
  \int_t^{t^\prime}L^*(\gamma(s),\dot{\gamma}(s),s)\,ds=\min\limits_{\substack{\zeta(t)=\gamma(t),\zeta(t^\prime)=\gamma(t^\prime)\\ \zeta\in C^{ac}([t,t^\prime],\T^n)}}\int_t^{t^\prime}L^*(\zeta(s),\dot{\zeta}(s),s)\,ds
\end{equation*}
Hence, we denote by $\sG(L^*)$ the set of all minimal curves and $\widetilde{\sG}(L^*)=\bigcup_{\gamma\in\sG(L^*)}(\gamma(t),\dot{\gamma}(t),t)$. 

Let $\alpha^\pm$ denote Mather's minimal average action of $L^\pm$. For $m_0, m_1\in \T^n$ and $T_0,T_1\in\Z_+$, we define
\begin{equation*}
 h^{T_0,T_1}_{L^*}(m_0,m_1):=\inf\limits_{\substack{\gamma(-T_0)=m_0,\gamma(T_1)=m_1\\ \gamma\in C^{ac}([-T_0,T_1],\T^n)}}\int_{-T_0}^{T_1}L^*(\gamma(t),\dot{\gamma}(t),t)\,dt+T_0\alpha^-+T_1\alpha^+,
\end{equation*}
and
\begin{equation*}
h_{L^*}^{\infty}(m_0,m_1):=\liminf_{T_0,T_1\to+\infty}h_{L^*}^{T_0,T_1}(m_0,m_1),
\end{equation*}
which are bounded. We take any two sequences of positive integers $\{T_0^i\}_{i\in\Z_+}$ and $\{T_1^i\}_{i\in\Z_+}$ with $T_\ell^i\rightarrow+\infty$ ($\ell=0,1$) as $i\to+\infty$ and the associated minimal curve $\gamma_i(t)$: $[-T^i_0,T^i_1]\to \T^n$  connecting $m_0$ to $m_1$ such that
$$
h_{L^*}^{\infty}(m_0,m_1)=\lim_{i\to\infty}h_{L^*}^{T_0^i,T_1^i}(m_0,m_1)=\lim_{i\to\infty}\int_{-T_0^i}^{T_1^i}L^*(\gamma_i(t),\dot{\gamma}_i(t),t)\,dt +T^i_0\alpha^-+T^i_1\alpha^+.
$$

The following lemma shows that any accumulation point $\gamma$ of $\{\gamma_i\}_i$ is a pseudo curve playing an analogous role as a semi-static curve. For the proof, see \cite{CY2004} or \cite{CY2009}.
\begin{Lem}\label{pseudo curve}
Let $\gamma$: $\R\to \T^n$ be an accumulation point of $\{\gamma_i\}_i$ as shown above. Then for any $s\geq 0$, $t\geq 1$,
\begin{equation}\label{pseudo curve formula}
\begin{split}
\int_{-s}^{t} L^*(\gamma(\tau),\dot{\gamma}(\tau),\tau)\,d\tau+s\alpha^-+t\alpha^+=\inf\limits_{\substack{\xi(-s_1)=\gamma(-s)\\\xi(t_1) =\gamma(t) \\  s_1-s\in\Z, ~t_1-t\in\Z\\ s_1\geq 0,~t_1\geq 1}}\int_{-s_1}^{t_1} L^*(\xi(\tau),\dot{\xi}(\tau),\tau)\,d\tau+s_1\alpha^-+t_1\alpha^+,
\end{split}
\end{equation}
where the minimum is taken over all absolutely continuous curves.
\end{Lem}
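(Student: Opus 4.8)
The non-trivial content of \eqref{pseudo curve formula} is the inequality ``$\le$''; the reverse inequality holds trivially by taking $\xi=\gamma|_{[-s,t]}$, $s_1=s$, $t_1=t$. The plan is to argue by contradiction: suppose that for some $s\ge 0$, $t\ge 1$ there is a competitor $\xi\colon[-s_1,t_1]\to\T^n$ with $s_1-s,\,t_1-t$ non-negative integers (the general case reduces to this one by adjoining to $\xi$ pieces of a calibrated curve and using the time-periodicity of $L^*$ on each of the half-lines $(-\infty,0]$ and $[1,+\infty)$), with $\xi(-s_1)=\gamma(-s)$, $\xi(t_1)=\gamma(t)$, and whose action-plus-$\alpha$-correction beats that of $\gamma|_{[-s,t]}$ by some fixed $3\epsilon>0$.

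The core of the argument is a splicing construction producing, for each large $i$, a competitor $\zeta_i$ for the minimization problem defining $h_{L^*}^{\tilde T_0^i,\tilde T_1^i}(m_0,m_1)$ with $\tilde T_0^i:=T_0^i+(s_1-s)$ and $\tilde T_1^i:=T_1^i+(t_1-t)$. On $[-\tilde T_0^i,-s_1]$ I set $\zeta_i(\tau):=\gamma_i(\tau+(s_1-s))$; since the shift is by an integer and $L^*$ is $1$-periodic in time on $(-\infty,0]$, this piece carries exactly the action $\int_{-T_0^i}^{-s}L^*(\gamma_i,\dot\gamma_i,\cdot)$. Symmetrically, on $[t_1,\tilde T_1^i]$ I set $\zeta_i(\tau):=\gamma_i(\tau-(t_1-t))$, carrying the action $\int_{t}^{T_1^i}L^*(\gamma_i,\dot\gamma_i,\cdot)$, using the $1$-periodicity of $L^*$ on $[1,+\infty)$ together with $t,t_1\ge 1$. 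On the middle interval $[-s_1,t_1]$ I insert a curve $\xi_i$ obtained from $\xi$ by a small modification near its two endpoints so that $\xi_i(-s_1)=\gamma_i(-s)$ and $\xi_i(t_1)=\gamma_i(t)$; because the $\gamma_i$ are Tonelli minimizers they converge in $C^1_{\mathrm{loc}}$ to $\gamma$, so the action of $\xi_i$ exceeds that of $\xi$ by an error $\mathrm{err}_i\to 0$.

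Comparing $h_{L^*}^{\tilde T_0^i,\tilde T_1^i}(m_0,m_1)\le \mathcal A(\zeta_i)+\tilde T_0^i\alpha^-+\tilde T_1^i\alpha^+$ with the identity $h_{L^*}^{T_0^i,T_1^i}(m_0,m_1)=\mathcal A(\gamma_i)+T_0^i\alpha^-+T_1^i\alpha^+$, splitting $\mathcal A(\gamma_i)$ over $[-T_0^i,-s]$, $[-s,t]$, $[t,T_1^i]$, the common outer pieces cancel, and the assumed strict gain (together with $\int_{-s}^{t}L^*(\gamma_i,\dot\gamma_i,\cdot)\to\int_{-s}^{t}L^*(\gamma,\dot\gamma,\cdot)$ and $\mathrm{err}_i\to 0$) yields, for all large $i$,
\[
h_{L^*}^{\tilde T_0^i,\tilde T_1^i}(m_0,m_1)\;\le\;h_{L^*}^{T_0^i,T_1^i}(m_0,m_1)-\epsilon .
\]
Since $T_0^i,T_1^i\to\infty$ and the $\gamma_i$ were chosen so that $h_{L^*}^{T_0^i,T_1^i}(m_0,m_1)\to h_{L^*}^\infty(m_0,m_1)$, while $\tilde T_0^i,\tilde T_1^i\to\infty$ forces $\liminf_i h_{L^*}^{\tilde T_0^i,\tilde T_1^i}(m_0,m_1)\ge h_{L^*}^\infty(m_0,m_1)$ directly from the definition of the $\liminf$ over all diverging pairs, we obtain $h_{L^*}^\infty(m_0,m_1)\le h_{L^*}^\infty(m_0,m_1)-\epsilon$, a contradiction.

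The main obstacle I anticipate is the endpoint-matching step: showing that replacing the endpoints $\gamma(\mp\,\cdot)$ of $\xi$ by the nearby points $\gamma_i(\mp\,\cdot)$ costs only $\mathrm{err}_i\to 0$, and that the time-shifted outer pieces genuinely reproduce the claimed sub-integrals with no boundary defect. Both rest on the a priori compactness of Tonelli minimizers (uniform velocity bounds, hence $C^1_{\mathrm{loc}}$-convergence $\gamma_i\to\gamma$) and on the exact $1$-periodicity of $L^*$ on each of the two half-lines; the integrality of $s_1-s$ and $t_1-t$ is precisely what makes the shifts compatible with this periodicity, which is why the infimum in \eqref{pseudo curve formula} is restricted to such $s_1,t_1$. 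The remaining bookkeeping — the cases $s=0$ or $t=1$, and the degenerate situations $s_1<s$ or $t_1<t$ — is routine once the periodic structure on $(-\infty,0]$ and $[1,+\infty)$ is respected.
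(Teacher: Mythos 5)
Your argument is correct, and it is essentially the standard splicing-and-comparison proof: the paper itself omits the proof of this lemma and defers to \cite{CY2004,CY2009}, where exactly this construction (time-translating the outer pieces of $\gamma_i$ by the integers $s_1-s$, $t_1-t$ to exploit the periodicity of $L^*$ on $(-\infty,0]$ and $[1,+\infty)$, inserting an endpoint-corrected copy of $\xi$, and contradicting the $\liminf$ definition of $h^\infty_{L^*}$) is carried out. One minor remark: the case $s_1<s$ or $t_1<t$ needs no separate reduction, since your gluing works verbatim as long as $\tilde T_0^i=T_0^i+(s_1-s)$ and $\tilde T_1^i=T_1^i+(t_1-t)$ are positive integers, which holds for all large $i$.
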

This leads us to define the set of \emph{pseudo connecting curves}
\begin{equation*}
  \sC(L^*):=\{\gamma |~\gamma\in\sG(L^*) \text{~and~} \eqref{pseudo curve formula}\text{~holds}\}.
\end{equation*}
Clearly, for each $\gamma\in\sC(L^*)$ the orbit $(\gamma(t),\dot\gamma(t),t)$ would negatively approach the Aubry set $\widetilde{\cA}(L^-)$ of the Lagrangian $L^-$ and positively approach the Aubry set $\widetilde{\cA}(L^+)$ of  $L^+$. This is why we call it a pseudo connecting curve. Define the following sets
$$
\widetilde{\cC}(L^*):=\bigcup_{\gamma\in\sC(L^*)}(\gamma(t),\dot\gamma(t),t),\qquad \cC(L^*):=\bigcup_{\gamma\in\sC(L^*)}(\gamma(t),t).
$$
Notice that if $L^*$ is time-1 periodic, then $\widetilde{\cC}(L^*)$ is exactly the Ma\~n\'e set and $\cC(L^*)$ is exactly the projected Ma\~n\'e set. Then we can prove the following property:

\begin{Pro}\label{uppersemi of pseudo curves}
The set-valued map $L^*\mapsto\sC(L^*)$ is upper semi-continuous, namely if $L^*_i\to L^*$ in the $C^2$ topology, then we have the set inclusion
$$ \limsup_i \sC(L^*_i)\subset \sC(L^*).$$ Consequently, the map $L^*\mapsto\widetilde{\cC}(L^*)$ is also upper semi-continuous.
\end{Pro}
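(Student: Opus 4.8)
The plan is to prove upper semi-continuity of the set-valued map $L^*\to\sC(L^*)$ by a standard Arzel\`a--Ascoli compactness argument combined with a passage to the limit in the variational characterization \eqref{pseudo curve formula}. Suppose $L^*_i\to L^*$ in the $C^2$ topology and let $(\gamma_i(t),\dot\gamma_i(t),t)\in\tilde{\cC}(L^*_i)$ be a sequence with $\gamma_i\in\sC(L^*_i)$ converging (in $\tilde\cC$, i.e.\ pointwise at some base time, hence the orbits converge) to some $(\gamma(t),\dot\gamma(t),t)$; I must show $\gamma\in\sC(L^*)$. First I would record the uniform a priori bounds: since each $\gamma_i$ is a minimizer for $L^*_i$ on every subinterval and the $L^*_i$ are uniformly Tonelli (uniform convexity and superlinearity, which survive $C^2$ convergence), the standard a priori compactness estimates for Tonelli minimizers give a uniform bound on $\|\dot\gamma_i\|_{C^0}$ on each compact time interval, and the Euler--Lagrange equation then gives uniform $C^{1,1}$ (in fact $C^2$) bounds. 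Hence, up to a subsequence, $\gamma_i\to\gamma$ in $C^1_{loc}(\R)$, and $\gamma$ solves the Euler--Lagrange equation of $L^*$, so $\gamma$ is at least a local extremal.

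Next I would upgrade $\gamma$ to a genuine minimizer, i.e.\ verify $\gamma\in\sG(L^*)$. Fix $t<t'$ and any competitor $\zeta$ with $\zeta(t)=\gamma(t)$, $\zeta(t')=\gamma(t')$. Build competitors $\zeta_i$ for $\gamma_i$ by connecting $\gamma_i(t)$ to $\zeta(t)=\gamma(t)$ and $\zeta(t')=\gamma(t')$ to $\gamma_i(t')$ by short linking segments of vanishing length (using $\gamma_i(t)\to\gamma(t)$, $\gamma_i(t')\to\gamma(t')$ and the uniform velocity bounds to control the extra action), and using $\zeta$ in between. Minimality of $\gamma_i$ for $L^*_i$ gives
\begin{equation*}
\int_t^{t'}L^*_i(\gamma_i,\dot\gamma_i,s)\,ds\le\int L^*_i(\zeta_i,\dot\zeta_i,s)\,ds,
\end{equation*}
and letting $i\to\infty$, using $C^1_{loc}$ convergence of $\gamma_i$, $C^2$ convergence $L^*_i\to L^*$, the vanishing of the linking contributions, and lower semi-continuity/continuity of the action functional, yields $\int_t^{t'}L^*(\gamma,\dot\gamma,s)\,ds\le\int_t^{t'}L^*(\zeta,\dot\zeta,s)\,ds$. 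Thus $\gamma\in\sG(L^*)$. Care is needed here with the time-step structure: one splits the interval at $0$ and $1$ and uses that $L^*$ is $C^2$ away from those integers and continuous across them; the same linking-segment trick absorbs any mild irregularity there.

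Finally I would pass to the limit in the pseudo-connecting identity \eqref{pseudo curve formula} itself. For fixed $s\ge0$, $t\ge1$ the left side converges because $\gamma_i\to\gamma$ in $C^1_{loc}$, $L^*_i\to L^*$, and the Mather constants $\alpha^\pm_i\to\alpha^\pm$ (continuity of Mather's $\alpha$ under $C^2$ perturbation). For the right side, the ``$\le$'' direction follows immediately from the corresponding identity for $\gamma_i$ together with the competitor-construction argument above (any competitor $\xi$ for $\gamma$ produces, by the same linking trick, competitors $\xi_i$ for $\gamma_i$), and the ``$\ge$'' direction is automatic since the right side is an infimum over a larger class than the single curve $\gamma$. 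Hence \eqref{pseudo curve formula} holds for $\gamma$, so $\gamma\in\sC(L^*)$, proving $\limsup_i\sC(L^*_i)\subset\sC(L^*)$. The consequence for $\tilde\cC$ is immediate since convergence in $\tilde\cC$ is exactly $C^1_{loc}$ convergence of the underlying curves. I expect the main obstacle to be the bookkeeping around the discontinuity of the time-step Lagrangian at $t=0,1$ — ensuring the a priori estimates, the Euler--Lagrange limiting, and the competitor constructions all go through cleanly across the junction — but this is essentially the same issue already handled in \cite{CY2004,CY2009}, so it should be a matter of adapting rather than inventing.
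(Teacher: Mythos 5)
Your proposal is correct and follows essentially the same route as the paper: Arzel\`a--Ascoli compactness (uniform velocity bounds, then $C^2$ bounds via the Euler--Lagrange equation, giving $C^1_{loc}$ convergence to $\gamma$), followed by transferring competitors between $\gamma$ and the $\gamma_i$ to verify \eqref{pseudo curve formula} in the limit. The only cosmetic difference is that the paper runs the last step as a proof by contradiction (a strictly better competitor for $\gamma$ would yield one for $\gamma_i$ with gap $\delta/2$ for large $i$), whereas you argue the limit directly; your worry about a junction discontinuity at $t=0,1$ is moot since $L^*$ is smooth there in the paper's construction.
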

\begin{proof}
Let  $L^*_i\to L^*$  in the $C^2$ topology. If $\gamma_i$ converges $C^0$-uniformly  to a curve $\gamma$ on each compact interval of $\R$ with $\gamma_i\in\sC(L^*_i)$. We claim that $\gamma\in\sC(L^*)$. 

Indeed, there exists $K>0$ such that $\|\dot{\gamma}_i(t)\|\leq K$ for all $t\in\R$,  so the set  $\{\gamma_i\}_{i}$  is compact in the $C^1$ topology. Since each $\gamma_i$ satisfies the Euler-Lagrange equation of $L_i$, by using the positive definiteness of $L^*_i$, one can write the Euler-Lagrange equation in the form of $\ddot{x}=f_i(x,\dot{x},t)$ for some $f_i$, which implies  that $\{\gamma_i\}_{i}$  is compact in the $C^2$ topology. By the Arzel\`{a}-Ascoli theorem, extracting a subsequence if necessary, we can assume that $\gamma_i$ converges $C^1$-uniformly to a $C^1$ curve $\eta$ on each compact interval of $\R$. Obviously, $\eta=\gamma.$

Next, if $\gamma\notin\sC(L^*)$, there would be some $s\geq 0, t\geq 1$, a curve $\widetilde{\gamma}:[-s-n_1, t+n_2]\to M$ and $\delta>0$ such that the action
$$\int_{-s-n_1}^{t+n_2}L^*(\widetilde{\gamma}(\tau),\dot{\widetilde{\gamma}}(\tau),\tau)\,d\tau+n_1\alpha^-+n_2\alpha^+\leq \int_{-s}^{t}L^*(\gamma(\tau),\dot{\gamma}(\tau),\tau)\,d\tau-\delta$$
where $s, s+n_1\geq 0$, $t, t+n_2\geq 1$ and $\widetilde{\gamma}(-s-n_1)=\gamma(-s)$, $\widetilde{\gamma}(t+n_2)=\gamma(t)$. Since we have shown that $\gamma$ is an accumulation point of $\gamma_i$ in the $C^1$ topology, for any small $\vep>0$, there would be a sufficiently large $i$ such that $\|\gamma-\gamma_i\|_{C^1[s,t]}\leq\vep$ and a curve $\widetilde{\gamma}_i: [-s-n_1, t+n_2]\to M$ with $\widetilde{\gamma}_i(-s-n_1)=\gamma_i(-s)$, $\widetilde{\gamma}_i(t+n_2)=\gamma_i(t)$ such that
$$\int_{-s-n_1}^{t+n_2}L_i^*(\widetilde{\gamma}_i(\tau),\dot{\widetilde{\gamma}}_i(\tau),\tau)\,d\tau+n_1\alpha^-+n_2\alpha^+\leq \int_{-s}^{t}L_i^*(\gamma_i(\tau),\dot{\gamma}_i(\tau),\tau)\,d\tau-\frac{\delta}{2},$$
By \eqref{pseudo curve formula}, $\gamma_i\notin\sC(L^*_i)$, which is a contradiction. This proves  $\gamma\in\sC(L^*)$. 

Finally,  the upper semi-continuity of  $L^*\mapsto\widetilde{\cC}(L^*)$ is a  consequence of what we have shown   above.
\end{proof}

\subsubsection{Local connecting orbits of type-{\it c}}\label{sub localc}
 In condition of the cohomology equivalence (see definition \ref{def_c_equivalenve}), we will show how to construct local connecting orbits  based on  Mather's variational mechanism. This idea of construction was first proposed by J. N. Mather in \cite{Ma1993}.

\begin{The}\label{clemma connect}
Let $L:T\T^n\times\T\to\R$ be a Tonelli Lagrangian and $c, c'\in H^1(\T^n,\R)$ are cohomology equivalent through a path $\Gamma:[0,1]\to H^1(\T^n,\R)$. Then there would exist $c=c_0, c_1, \dots, c_k=c'$ on the path $\Gamma$, closed 1-forms $\eta_i$ and $\bar{\mu}_i$ on $M$ with $[\eta_i]=c_i$, $[\bar{\mu}_i]=c_{i+1}-c_i$, and a smooth function $\rho_i(t):\R\to[0,1]$ for each $i=1,\cdots,k$, such that the time-step Lagrangian 
\begin{equation*}
	L_{\eta_i,\mu_i}=L-\eta_i-\mu_i,\quad\text{with}\quad \mu_i=\rho_i(t)\bar{\mu}_i
\end{equation*}
possesses the following properties:
 
 For each curve $\gamma\in \sC(L_{\eta_i,\mu_i})$, it determines a trajectory $(d\gamma(t),t)$, connecting $\widetilde{\cA}(c_i)$ to $\widetilde{\cA}(c_{i+1})$,  of the Euler-Lagrange flow $\phi^t_L$.
\end{The}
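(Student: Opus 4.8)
\textbf{Proof proposal for Theorem \ref{clemma connect}.}

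The plan is to follow J. Mather's original strategy from \cite{Ma1993}, adapted to the time-step formalism developed in Section \ref{sec_proof_of_connectingthm}. First I would unwind the definition of $c$-equivalence (Definition \ref{def_c_equivalenve}): along the path $\Gamma$ and using the compactness of $[0,1]$ together with the local condition $\Gamma(s)-\Gamma(s_0)\in\mathbb{V}_{\Gamma(s_0)}^{\bot}$, one extracts a finite partition $c=c_0,c_1,\dots,c_k=c'$ such that consecutive $c_i,c_{i+1}$ differ by an element of $\mathbb{V}_{c_i}^{\bot}$, i.e. $c_{i+1}-c_i\in\mathbb{V}_{c_i}^{\bot}=\ker i_U^*$ for a neighborhood $U$ of $\cN_0(c_i)$ as in Mather's result quoted before Definition \ref{def_c_equivalenve}. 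This means a representative $\bar{\mu}_i$ of $c_{i+1}-c_i$ can be chosen supported \emph{outside} $U$, hence $\bar{\mu}_i$ vanishes on a neighborhood of the (time-$0$ section of the) Ma\~n\'e set $\tilde{\cN}(c_i)$. Then I would pick a bump $\rho_i(t):\R\to[0,1]$ with $\rho_i\equiv0$ on $(-\infty,0]$ and $\rho_i\equiv1$ on $[1,+\infty)$, and set $\mu_i=\rho_i(t)\bar\mu_i$, so that the time-step Lagrangian $L_{\eta_i,\mu_i}=L-\eta_i-\mu_i$ equals $L-\eta_i$ (cohomology $c_i$) for $t\le0$ and $L-\eta_i-\bar\mu_i$ (cohomology $c_{i+1}$) for $t\ge1$.

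Next I would invoke the machinery of Subsection \ref{sec localconnect}: for the time-step Lagrangian $L^*=L_{\eta_i,\mu_i}$, the set $\sC(L^*)$ of pseudo connecting curves is nonempty (accumulation points of the minimizers $\gamma_i$ of $h^{T_0^i,T_1^i}_{L^*}$ exist by the a priori compactness coming from superlinearity), and by Lemma \ref{pseudo curve} each $\gamma\in\sC(L^*)$ satisfies the global minimality relation \eqref{pseudo curve formula}. From \eqref{pseudo curve formula} one reads off that $(d\gamma(t),t)$ is an honest Euler--Lagrange orbit whose $\alpha$-limit set lies in $\tilde{\cA}(L^-)=\tilde{\cA}(c_i)$ and whose $\omega$-limit set lies in $\tilde{\cA}(L^+)=\tilde{\cA}(c_{i+1})$, which is precisely the asserted connecting property. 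The heart of the argument — and the step I expect to be the main obstacle — is showing that the orbit actually \emph{crosses over} rather than staying trapped: a priori the minimizer could spend all its time near $\tilde{\cA}(c_i)$ without ever feeling the region where $\bar\mu_i$ is switched on. This is ruled out exactly because $\bar\mu_i$ is supported away from $\cN_0(c_i)$; one argues that if a pseudo connecting curve stayed in a small neighborhood of $\tilde{\cA}(c_i)$ for all large positive time, then (since on that neighborhood $\mu_i$ is cohomologous to $0$ there, the action would be that of $L-\eta_i$ plus an exact contribution) it would have to be $c_i$-semistatic for $L-\eta_i$, forcing its $\omega$-limit into $\tilde{\cA}(c_i)$; but then flowing further, the switched-on form $\bar\mu_i$ changes the effective cohomology and a standard comparison of actions (using that the neighborhood $U$ realizes $\mathbb{V}_{c_i}$ and $\mathbb{V}_{c_i}^\bot$, cf. \cite{Ma1993}) shows the orbit must leave $U$, contradiction.

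Finally I would assemble the pieces: the finitely many closed $1$-forms $\eta_i$ (with $[\eta_i]=c_i$) and $\bar\mu_i$ (with $[\bar\mu_i]=c_{i+1}-c_i$), the cutoffs $\rho_i$, and for each $i$ a choice of $\gamma\in\sC(L_{\eta_i,\mu_i})$ yield local connecting orbits from $\tilde{\cA}(c_i)$ to $\tilde{\cA}(c_{i+1})$; these are the ``type-$c$'' local connecting orbits referred to in the statement. (The concatenation of all $k$ of them into a single diffusion orbit along $\Gamma$ is the content of the subsequent shadowing argument, not of this theorem.) The only genuinely delicate point beyond bookkeeping is the non-trapping/crossing claim above, where one must use the precise structure of $\mathbb{V}_{c_i}^\bot$ — everything else is an application of the upper semi-continuity of $L^*\mapsto\sC(L^*)$ (Proposition \ref{uppersemi of pseudo curves}) and of Lemma \ref{pseudo curve}.
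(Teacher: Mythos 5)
Your overall route is the same as the paper's: decompose the path by compactness, choose $\bar\mu_i$ with $[\bar\mu_i]\in\mathbb{V}^\bot_{c_i}$ vanishing identically on a neighborhood $U_i$ of $\cN_0(c_i)$, switch it on with a cutoff in time, and take pseudo connecting curves of the resulting time-step Lagrangian. However, you have misplaced the difficulty. The scenario you call ``the heart of the argument'' --- the minimizer staying trapped near $\tilde{\cA}(c_i)$ --- is not an issue at all: by Lemma \ref{pseudo curve} and the remark following it, every $\gamma\in\sC(L_{\eta_i,\mu_i})$ automatically has its $\alpha$-limit in $\tilde{\cA}(L^-)=\tilde{\cA}(c_i)$ and its $\omega$-limit in $\tilde{\cA}(L^+)$, and $\tilde{\cA}(L^+)=\tilde{\cA}(c_{i+1})$ because after the switching the Lagrangian is $L-\eta_i-\bar\mu_i$ with $[\eta_i+\bar\mu_i]=c_{i+1}$. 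The genuine difficulty is that $\gamma$ is a priori only an Euler--Lagrange orbit of the time-step Lagrangian, and the theorem asserts it is an orbit of $\phi^t_L$. The term $\mu_i=\rho_i(t)\bar\mu_i$ perturbs the Euler--Lagrange equation by $\rho_i'(t)\,\bar\mu_i(\gamma(t))$; note that exactness of $\bar\mu_i$ on $U_i$ is \emph{not} enough to kill this (a term $\rho_i(t)\,df$ contributes the potential $\rho_i'(t)f$), one really needs $\bar\mu_i\equiv 0$ there, which you did arrange. So the whole point is to guarantee $\gamma(t)\in U_i$ for every $t$ with $\rho_i'(t)\neq 0$.

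This is exactly where your construction fails as written: you let $\rho_i$ switch over the whole interval $[0,1]$. Upper semi-continuity (Proposition \ref{uppersemi of pseudo curves}) only places $\gamma(t)$ near the time-$t$ slice of the Ma\~n\'e set of $c_i$, whereas $U_i$ is a neighborhood of the time-$0$ slice $\cN_0(c_i)$ only; for $t$ bounded away from $0$ that slice can leave $U_i$, and then $\rho_i'(t)\bar\mu_i(\gamma(t))\neq 0$, so $\gamma$ is not an orbit of $L$. The paper's proof avoids this by performing the switching on a short window $[0,\delta_i]$ with $\delta_i\ll 1$ chosen so that $\cN_t(c_i)\subset U_i$ for all $t\in[0,\delta_i]$, and by requiring $|\bar\mu_i|$ small (available since consecutive $c_i$ may be taken arbitrarily close) so that upper semi-continuity forces $\gamma([0,\delta_i])\subset U_i$. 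Then $L_{\eta_i,\mu_i}=L-\eta_i$ along $\gamma$ for $t\le\delta_i$ and $L_{\eta_i,\mu_i}=L-\eta_i-\bar\mu_i$ for $t\ge\delta_i$, so $\gamma$ is $c_i$-semistatic in the past, $c_{i+1}$-semistatic in the future, and a genuine orbit of $\phi^t_L$ throughout. With these two corrections your argument coincides with the paper's; the ``non-trapping'' discussion can simply be deleted.
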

\begin{proof}
By definition \ref{def_c_equivalenve}, it is obvious that there exist $c=c_0, c_1, \dots, c_k=c'$ on the path $\Gamma$, closed 1-forms $\eta_i$ and $\bar{\mu}_i$ on $M$ with $[\eta_i]=c_i$, $[\bar{\mu}_i]=c_{i+1}-c_i$$\in \mathbb{V}^{\bot}_{c_i}$ for each $i=1,\cdots,k$. By the arguments in Section \ref{sec local and global}, there is also a neighborhood $U_i$ of the projected Ma\~n\'e set $\cN_0(c_i)$ such that $\mathbb{V}_{c_i}=i_{*U_i}H_1(U_i,\R)$.
 
In particular, we can suppose $\bar{\mu}_i=0$ on $U_i$. Indeed, as $[\bar{\mu}_i]\in \mathbb{V}^{\bot}_{c_i}$,  $\bar{\mu}_i$ is exact when restricted on $U_i$ and there is  a smooth function $f:M\to\R$ satisfying $df=\bar{\mu}_i$ on $U_i$, and hence we can replace $\bar{\mu}_i$ by $\bar{\mu}_i-df$.

As $\cN_0(c_i)\subset U_i$, there exists $\delta_i\ll 1$ such that $\cN_t(c_i)\subset U_i$ for all $t\in[0, \delta_i]$. Let $\rho_i:\R\to [0,1]$ be a smooth function such that $\rho_i(t)=0$ for $t\in(-\infty, 0]$, $\rho_i(t)=1$ for $t\in[\delta_i,+\infty)$.  We set $\mu_i=\rho_i(t)\bar{\mu}_i$ and introduce a \emph{time-step} Lagrangian
$$L_{\eta_i,\mu_i}=L-\eta_i-\mu_i: T\T^n\times\R\to\R.$$
For each orbit $\gamma\in\sC(L_{\eta_i,\mu_i})$, by the upper semi-continuity in Proposition \ref{uppersemi of pseudo curves},
\begin{equation}\label{belong typec}
\gamma(t)\in U_i,~\forall~t\in[0,\delta_i]
\end{equation}
holds provided that $|\bar{\mu}_i|$ is small enough.

Clearly, $(\gamma,\dot{\gamma})$ solves the Euler-Lagrange equation of $L_{\eta_i,\mu_i}$. To verify it solves the  Euler-Lagrange equation of $L$, we see that $\gamma\big|_{[0,\delta_i]}\subset U_i$
and $L_{\eta_i,\mu_i}=L-\eta_i$ on $U_i$ where $\eta_i$ is a closed 1-form, so $\gamma(t)$ solves the Euler-Lagrange equation of $L$ for $t\in[0,\delta_i]$. On the other hand, for $t\in(-\infty, \delta_i]$ we have $L_{\eta_i,\mu_i}=L-\eta_i$, then $\gamma(t)$ is a $c_{i}$-semi static curve of $L$ on the interval $(-\infty, \delta_i]$. Similarly, $\gamma(t)$ is a $c_{i+1}$-semi static curve of $L$ for $t\in[\delta_i,+\infty)$. Thus, $(\gamma(t),\dot{\gamma}(t)): \R\to T\T^n$ solves the Euler-Lagrange equation of $L$, and by Section \ref{sec_Preliminaries}, this orbit connects $\widetilde{\cA}(c_i)$ to $\widetilde{\cA}(c_{i+1})$ .
\end{proof}

\subsubsection{Local connecting orbits of type-{\it h}}\label{sub localh}
Next, we will discuss the so-called local connecting orbits of type-$h$, it can be thought of as a variational version of Arnold's mechanism, the condition of geometric transversality is extended to the total disconnectedness of minimal points of barrier function. It is used to handle the situation where the cohomology equivalence does not always exist. Usually, it is applied to the case where the Aubry set lies in a neighborhood of a lower dimensional torus,  in that case, we let $\check{\pi}:\check{M}\rightarrow \T^n$ be a finite covering of $\T^n$. Denote by $\widetilde{\cN}(c,\check{M}), \widetilde{\cA}(c,\check{M})$ the  Ma\~{n}\'{e} set and Aubry set with respect to  $\check{M}$, then $\widetilde{\cA}(c,\check{M})$ would have more than one Aubry classes. In fact, for the construction of type-$h$ local connecting orbits in our proofs of Theorem \ref{main theorem} and \ref{main thm2}, it only involves two Aubry classes (see section \ref{sec proof main}).

Thus,  we only need to deal with the situation where the Tonelli Lagrangian $L: T\T^n\times\T\rightarrow \R$ contains more than one Aubry classes. Let  $\cA(c)\big|_{t=0} $ denote the time-0 section of the projected Aubry set $\cA(c)$, i.e. $\cA(c)\bigcap(\T^n\times\{t=0\})$, then we obtain the local connecting orbits of type-$h$ as follows:
\begin{The}\label{connecting type h}
Let the projected Aubry set $\cA(c)=\cA_{c,1}\cup\cdots \cup\cA_{c,k}$ consists of $k$  $(k\geq 2)$ Aubry classes.  Let  $U:=\T^n\setminus\big(\cA(c)|_{_{t=0}}+\kappa\big)$ be an open set where $\kappa>0$ is small. If $U\bigcap (\cN(c)|_{t=0})$ is non-empty and totally disconnected.
Then for any $c'$ sufficiently close to $c$, there exists an orbit of the Euler-Lagrange flow $\phi^t_L$ whose $\alpha$-limit set lies in $\widetilde{\cA}(c)$ and $\omega$-limit set lies in $\widetilde{\cA}(c')$.
\end{The}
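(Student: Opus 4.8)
The proof will follow the pattern of the type-$c$ construction (Theorem \ref{clemma connect}), with the cohomology equivalence replaced by the total disconnectedness hypothesis and with the upper semi-continuity of pseudo connecting curves (Proposition \ref{uppersemi of pseudo curves}) as the main soft tool. First I would extract a clean two-class statement from the hypothesis. By Proposition \ref{manejifenlei}, $\cN(c)|_{t=0}=\bigcup_{i,j}\arg\min B_{c,i,j}|_{t=0}$, so the non-emptiness of $U\cap\cN(c)|_{t=0}$ gives a pair $i\neq j$ — say $i=1$, $j=2$ — and a point $p\in U\cap\arg\min B_{c,1,2}|_{t=0}$. Since $U$ is disjoint from $\cA(c)|_{t=0}$, this $p$ does not lie in the Aubry set, hence (using Proposition \ref{manejifenlei}, and Proposition \ref{double description} when only two classes are present) it sits on a genuine $c$-semi static heteroclinic $\gamma_p:\R\to M$ running from $\cA_{c,1}$ to $\cA_{c,2}$; moreover $U\cap\cN(c)|_{t=0}$ is a totally disconnected compact set, so the connected component of $p$ in it is $\{p\}$, and one may fix an open neighbourhood $W$ of $p$ with $\overline W\subset U$ and $\partial W\cap\cN(c)|_{t=0}=\emptyset$.

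Next I would build a time-step Lagrangian interpolating between $c$ and $c'$. Fix closed $1$-forms $\eta_c,\bar\mu$ with $[\eta_c]=c$, $[\bar\mu]=c'-c$, a small $\delta>0$, and a smooth $\rho:\R\to[0,1]$ with $\rho\equiv0$ on $(-\infty,0]$ and $\rho\equiv1$ on $[\delta,+\infty)$; put $L^*=L-\eta_c-\rho(t)\bar\mu$, so that $L^-=L-\eta_c$ and $L^+=L-\eta_c-\bar\mu$ carry the dynamics at cohomology $c$ and $c'$ respectively. Taking $\delta$ small, the arc $\gamma_p|_{[0,\delta]}$ is short and stays in $\overline W$; since a thin neighbourhood of this arc carries no $1$-cycle dual to $c'-c$, I would, exactly as in the proof of Theorem \ref{clemma connect}, modify $\bar\mu$ by an exact form so that $\bar\mu$ vanishes identically on that thin neighbourhood (the cohomological charge of $c'-c$ being routed through its complement). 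With these choices, any $\gamma\in\sC(L^*)$ whose transition part $\gamma|_{[0,\delta]}$ stays in the flattened region solves the Euler--Lagrange equation of $L$ on all of $\R$ — on each of $(-\infty,0]$, the flattened region, and $[\delta,+\infty)$ the Lagrangian $L^*$ differs from $L$ only by a closed $1$-form — while having its $\alpha$-limit set in $\tilde\cA(c)=\tilde\cA(L^-)$ and its $\omega$-limit set in $\tilde\cA(c')=\tilde\cA(L^+)$; as $\gamma(0)\in W\subset U$ is outside $\cA(c)|_{t=0}$, such an orbit genuinely connects $\tilde\cA(c)$ to $\tilde\cA(c')$.

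It then remains to produce, for every $c'$ close to $c$, a curve of $\sC(L^*)$ with $\gamma(0)\in W$. When $c'=c$ the Lagrangian $L^*$ is $1$-periodic, $\cC(L^*)=\cN(c)$, and $\cC(L^*)|_{t=0}$ meets $W$ at $p$ but is disjoint from $\partial W$; in barrier terms the relevant glued barrier function (built from $u^-_{c,1}$ and $u^+_{c,2}$) attains its minimum over $\overline W$ in the interior and is strictly larger on $\partial W$. This strict separation persists under a small change of $c'$: the backward datum $u^-_{c,1}$ is unchanged, and the forward datum built from $L^*$ differs from $u^+_{c,2}$ by $O(\|c'-c\|)$ uniformly (the perturbation $\rho(t)\bar\mu$ has size $O(\|c'-c\|)$ and is switched on a bounded time-interval), so for all $c'$ sufficiently near $c$, on either side, the glued barrier still has an interior minimiser in $W$. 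Such a minimiser yields $\gamma_{c'}\in\sC(L^*)$ with $\gamma_{c'}(0)\in W$, and by Proposition \ref{uppersemi of pseudo curves} (after shrinking $W$ and $\delta$ beforehand) its transition part stays in the flattened region. Combined with the previous paragraph this proves the theorem.

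The main obstacle is exactly this last passage: one must convert the purely topological fact that $U\cap\cN(c)|_{t=0}$ is totally disconnected into a quantitative, two-sided stable statement — a barrier-free separating surface $\partial W$ that survives when $c$ is replaced by any nearby $c'$ — while simultaneously keeping the connecting curve inside the region where the cohomological perturbation has been flattened, so that it is an honest trajectory of $L$ and not merely of $L^*$. This is where total disconnectedness does, variationally, the job that transversality of the stable and unstable manifolds does in Arnold's geometric mechanism, and the localization of $\bar\mu$ is the same device already used for type-$c$ orbits in Theorem \ref{clemma connect}.
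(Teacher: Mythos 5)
Your architecture is close to the paper's (time-step Lagrangian $L-\eta-\rho(t)\bar\mu$, flattening $\bar\mu$ on a simply connected neighbourhood of the crossing region, total disconnectedness used to isolate an interior crossing point, upper semi-continuity as the soft tool), but two genuine gaps remain. First, you omit the bump function that the paper adds to the Lagrangian: a term $\psi(x,t)=\varepsilon\psi_1(x)\psi_2(t)$ with $\psi_1\equiv 1$ on a set $\bar F$ around the crossing point, $\psi_1<0$ on neighbourhoods of the other Aubry classes, and $\psi_2>0$ exactly on the switching interval $(0,\delta)$. This term is what forces every pseudo connecting curve of the modified problem to cross the flattened region \emph{during} $[0,\delta]$: without it, $\sC(L-\eta)=\cN(c)$ contains all integer time-translates of the heteroclinic (and static curves of every class), so Proposition \ref{uppersemi of pseudo curves} gives no control on \emph{when} the $\mu\neq 0$ minimizer crosses $U$, and the curve may well traverse the support of $\rho'(t)\bar\mu$ outside the flattened region, in which case it is an orbit of $L^*$ but not of $L$. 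Your substitute — constraining $\gamma(0)\in\bar W$ and invoking the upper semi-continuity proposition — does not directly apply, since that proposition is stated for the unconstrained sets $\sC(L^*)$; you would need a constrained analogue, or the bump function, to pin the crossing time. (Note also that since $\psi_1$ is \emph{constant} on $\bar F$, the extra term does not disturb the Euler--Lagrange equation of $L$ along the transition arc; this is how the paper reconciles the penalization with the requirement that the limit curve be a genuine $L$-orbit.)

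Second, your persistence step rests on the claim that the forward datum of $L^*$ differs from $u^+_{c,2}$ by $O(\|c'-c\|)$ uniformly. This is not available in the generality of this theorem: Peierls barriers and (elementary) weak KAM solutions are in general only \emph{upper semi-continuous} in the cohomology class for a Tonelli Lagrangian, not continuous, let alone Lipschitz. The H\"older-$\tfrac12$ estimate of Theorem \ref{global regularity of elementary solutions} is proved only under the normally hyperbolic cylinder structure of the a priori unstable model, whereas Theorem \ref{connecting type h} is stated for an arbitrary Tonelli Lagrangian with finitely many Aubry classes. The paper avoids any quantitative comparison of barriers altogether: it works with the set-valued map $(\eta,\mu,\psi)\mapsto\sC(L_{\eta,\mu,\psi})$ and uses only its upper semi-continuity (Proposition \ref{uppersemi of pseudo curves}), first analysing the periodic case $\mu=0$ exactly (where the $\psi$-modified Lagrangian has precisely the two Aubry classes $\cA_{c,j},\cA_{c,j'}$ and all pseudo connecting curves cross $F$ in $[0,\delta]$), and then perturbing to small $\mu\neq 0$. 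Replacing your quantitative barrier comparison by this soft argument — which requires the bump function to make the $\mu=0$ picture rigid — is precisely what closes the proof.
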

\begin{proof}
As the number of Aubry class of $\cA(c)$ is finite, it is well known that  if $c'$ is sufficiently close to $c$, the projected Aubry set $\cA(c')$ will be contained in a small neighborhood of $\cA(c)$, see e.g. \cite{Be2010On}. 

Since each Aubry class is compact and disjoint with each other,  we have $\text{\rm dist}(\cA_{c,i},\cA_{c,i'})>0$ for any $i\neq i'$, and there exist open neighborhoods $N_1,\cdots,N_k \subset M$ such that $\cA_{c,i}\big|_{t=0}\subset N_i$ for each $1\leq i\leq k$ and $\textup{dist}(N_i,N_{i'})>0$ for $i\neq i'$. Thus,  $\cA(c')|_{t=0}\subset \bigcup_i N_i$. From Proposition \ref{manejifenlei} and definition \eqref{ij maneorbit} we know $\cN(c)=\bigcup_{i,i'}\cN_{i,i'}(c)$, and hence there is a pair $(j, j')$ such that $\cA(c')|_{t=0}\cap N_{j'}\neq\emptyset$ and $U\cap \cN_{j,j'}(c)\neq\emptyset$.  

By the total disconnectedness assumption, we can find  simply connected open sets $F$ and $O$ such that $F\subset O\subset U$, $\textup{dist}(O,\bigcup_{i=1}^k\bar{N}_i)>0$  and $\emptyset\neq O\bigcap\big(\cN_{j,j'}(c)\big|_{t=0}\big)\subset F$. Then some $\delta>0$ exists such that
\begin{equation}\label{belongrelation}
O\bigcap\big(\cN_{j,j'}(c)\big|_{0\leq t\leq\delta}\big)\subset F.
\end{equation}
Let $\eta$ and $\bar{\mu}$ be closed 1-forms such that $[\eta]=c$, $[\bar{\mu}]=c'-c$, and let $\rho:\R\to [0, 1]$ be a smooth function such that $\rho(t)=0$ for $t\leq 0$, $\rho=1$ for $t\geq\delta$. Note that by the simple connectedness of $O$, we are able to choose $\bar{\mu}$ such that $\textup{supp}\bar{\mu}\cap \bar{O}=\emptyset$.
Next, we can construct a smooth function $\psi(x,t)=\vep\psi_1(x)\psi_2(t):M\times\T\to [-1, 1]$ where $\vep>0$, such that 
\begin{equation*}
  \psi_1(x)\left\{
     \begin{array}{ll}
       =1, & x\in \bar{F}, \\
       <1, & x\in O\setminus F,\\
       <0, & x\in\bigcup\limits_{i\neq j,j'}N_i, \\
       =0, & \textup{elsewhere.}
     \end{array}
   \right.
\end{equation*}
and
\begin{equation*}
  \psi_2(t)\left\{
             \begin{array}{ll}
               >0, & t\in (0,\delta), \\
               =0, & t\in(-\infty,0]\cup[\delta,+\infty).
             \end{array}
           \right.
\end{equation*}
Then we set $\mu=\rho(t)\bar{\mu}$ and  introduce a time-step Lagrangian
$$L_{\eta,\mu,\psi}=L-\eta-\mu-\psi: T\T^n\times\R\to\R.$$

Let us first suppose $\mu=0$. Since $\psi(x,t)=0$ for $(x,t)\in\cA_{c,j}\cup\cA_{c,j'}$, and $\psi(x,t)<0$ for $(x,t)\in\bigcup\limits_{i\neq j,j'}N_i$ with $t\in(0,\delta)$, the Lagrangian $L_{\eta,0,\psi}$ contains only two Aubry classes which are exactly $\cA_{c,j}$ and $\cA_{c,j'}$ provided the  number $\vep>0$ is small enough.  The set $\sC(L_{\eta,0,\psi})$ then satisfies:
\begin{enumerate}[(a)]
  \item  $\cA_{c,j}\cup\cA_{c,j'}\subset\sC(L_{\eta,0,\psi})$.
  \item $\sC(L_{\eta,0,\psi})\setminus\big(\cA_{c,j}\cup\cA_{c,j'}\big)$ is non-empty. For each pseudo connecting curve  $\xi\in$ $\sC(L_{\eta,0,\psi})\setminus$ $\big(\cA_{c,j}\cup\cA_{c,j'}\big)$, we have $\xi(t)\in F$ for $0\leq t\leq\delta$, but its integer translation $K^*\xi(t):=\xi(t-K)$ with $K\in\Z\setminus{0}$ does not belong to $\sC(L_{\eta,0,\psi})$ since $L_{\eta,0,\psi}$ is not periodic in $t$.
  \item $\sC(L_{\eta,0,\psi})$ does not contain any other curves.
\end{enumerate}
These properties follow  directly from \eqref{belongrelation} and the fact that $\psi(x,0)$ attains its maximum if and only if $x\in\bar{F}$, and the upper semi-continuity of $(\eta,0,\mu)\mapsto \sC(L_{\eta,0,\psi})$.

If $\mu\neq 0$. For $m_0\in\cA_{c,j}\big|_{t=0}, m_1\in \cA_{c,j'}\big|_{t=0}$, let $T_0^k, T_1^k\to+\infty$ be two sequences of positive integers such that
$$\lim\limits_{k\to\infty}h_{L_{\eta,\mu,\psi}}^{T_0^k,T_1^k}(m_0,m_1)=h_{L_{\eta,\mu,\psi}}^{\infty}(m_0,m_1).$$
Let $\gamma_k(t): [-T^k_0, T^k_1]\to M$ be a minimizer associated with $h_{L_{\eta,\mu,\psi}}^{T_0^k,T_1^k}(m_0,m_1)$ and $\gamma$ be any accumulation point of $\{\gamma_k\}_k$, then $\gamma\in\sC(L_{\eta,\mu,\psi})$. If $\mu$ and $\vep$ are small enough, we deduce from the properties $(a)-(c)$ and the upper semi-continuity of $(\eta,\mu,\psi)\mapsto \sC(L_{\eta,\mu,\psi})$ that
\begin{equation}\label{constant region}
\gamma(t)\in F,\quad\forall t\in[0, \delta].
\end{equation}

Obviously, $(\gamma,\dot{\gamma})$ satisfies the Euler-Lagrangian equation of $L_{\eta,\mu,\psi}$, but we still need to verify that it solves the Euler-Lagrangian equation of $L$. In fact, $L_{\eta,\mu,\psi}=L-\eta$ for $t\leq 0$ and $L_{\eta,\mu,\psi}=L-\eta+\bar{\mu}$ for $t\geq\delta$ where $\eta$, $\bar{\mu}$ are  closed 1-forms, so $\gamma(t)$ solves the Euler-Lagrangian equation of $L$ for $t\in(-\infty, 0]\cup[\delta, +\infty)$.  This also implies that $\gamma:(-\infty,0]\to \T^n$ is a $c$-semi static curve of $L$ and $\gamma:[\delta,+\infty)\to M$ is a $c'$-semi static curve of $L$, then $$\alpha(d\gamma(t),t)\subset\widetilde{\cA}(c),\quad \omega(d\gamma(t),t)\subset\widetilde{\cA}(c').$$
Besides, for $t\in[0, \delta]$, we deduce from \eqref{constant region} that  the Euler-Lagrange equation $(\frac{d}{dt}\partial_v-\partial_x)L_{\eta,\mu,\psi}=0$ is equivalent to $(\frac{d}{dt}\partial_v-\partial_x)L=0$ along the curve $\gamma(t)$ within $0\leq t\leq\delta$, which therefore shows that $(\gamma,\dot{\gamma})$  solves the Euler-Lagrange equation of $L$ for $t\in[0,\delta]$. This completes our proof.
\end{proof}

From the proof of Theorem \ref{connecting type h} we see that the connecting orbit $(\gamma,\dot{\gamma})$ obtained in this theorem is locally minimal in the following sense:

\noindent{\bf Local minimum}: {\it 
There are two open balls $V^-,V^+\subset \T^n$ and  $k^-,k^+\in\Z+$ such that $\bar{V}^-\subset N_j\setminus \cA(c)\big|_{t=0}$ and $\bar{V}^+\subset N_{j'}\setminus \cA(c')\big|_{t=0}$,
$\gamma(-k^-)\in V^-$, $\gamma(k^+)\in V^+$ and
\begin{equation}\label{local minimal property}
\begin{aligned}
&h_c^{\infty}(x^-,m_0)+h_{L_{\eta,\mu,\psi}}^{k^-,k^+}(m_0,m_1)+h_{c'}^{\infty}(m_1,x^+)\\
>&\liminf_{k^-_i, k_i^+\to\infty}\int_{-k^-_i}^{k^+_i}
L_{\eta,\mu,\psi}(\gamma(t),\dot{\gamma}(t),t)\,dt+k^-_i\alpha(c)+k^+_i\alpha(c')
\end{aligned}
\end{equation}
holds for all $(m_0,m_1)\in \partial(V^-\times V^+)$, $x^-\in N_j\cap \alpha(\gamma)|_{t=0}$, $x^+\in N_{j'}\cap\omega(\gamma)|_{t=0}$, where
$k_i^-, k_i^+$ are the sequences such that $\gamma(-k_i^-)\to x^-$ and $\gamma(k_i^+)\to x^+$.}

The set of curves starting from $V_i^-$ and reaching $V_{i'}^+$ within time $k^-+k^+$ would make up a neighborhood of the curve $\gamma$ in the space of curves. If it touches the boundary of this neighborhood, the action of $L_{\eta,\mu,\psi}$ along a curve $\xi$ will be larger than the action along $\gamma$. Besides, the connecting orbit of type-$c$ also has local minimal property. In this case, the modified Lagrangian has the form $L_{\eta,\mu}$. The local minimality is crucial in the variational construction of global connecting orbits.

\subsection{Global connecting orbits}\label{sec variationconst}
Now, we are ready to prove Theorem \ref{generalized transition thm} from a variational viewpoint.  Intrinsically, we construct a global connecting orbit  by shadowing a sequence of local connecting  orbits.

\begin{proof}[Sketch of the proof of Theorem \ref{generalized transition thm}]
The proof parallels to that of \cite{CY2009} by a small modification. Here we only give a sketch of the basic idea, and the reader can refer to \cite[Section 5]{CY2009}, \cite{Ch2018}, \cite{Ch2012} for more details. For the generalized transition chain $\Gamma:[0,1]\to H^1(\T^n,\R)$ with $\Gamma(0)=c$ and $\Gamma(1)=c'$, by definition there exists a sequence $0=s_0<s_1<\cdots<s_m=1$ such that  $s_i$ is sufficiently close to $s_{i+1}$ for each $0\leq i\leq m-1$, and $\cA(\Gamma(s_i))$ could be connected to $\cA(\Gamma(s_{i+1}))$ by a local minimal orbit of either type-$c$ (as Theorem \ref{clemma connect}) or type-$h$ (as Theorem \ref{connecting type h}). Then the global connecting orbits are just constructed by shadowing these local ones. For simplicity, we set $c_i=\Gamma(s_i)$.

For each $i\in \{0,1,\cdots,m-1\}$,  we take $\eta_i,\mu_i$, $\psi_i$ and $\delta_i>0$ as that in the proof of Theorem \ref{clemma connect} and \ref{connecting type h}, where $\psi_i=0$ in the case of type-$c$. Then we choose $k_i\in\Z_+$ with $k_0=0$ and $k_{i+1}-k_i$ is suitably large for each $i\in \{0,1,\cdots,m-1\}$, and  introduce a modified Lagrangian
\begin{equation*}
 L^*:=L-\eta_0-\sum\limits_{i=0}^{m-1}k_i^*(\mu_i+\psi_i).
\end{equation*}
Here,  $k_i^*$ denotes a time translation operator such that $k^*_if(x,t)=f(x, t-k_i)$, and $\psi_i= 0$ in the case of type-$c$.  By this definition,  we see that $L^*=L-\eta_0$ for $t\leq k_0=0$, $L^*=L-\eta_m$ for $t\geq k_{m-1}+\delta_{m-1}$, and for each $i\in\{0, 1,\cdots,m-2\}$, $L^*=L-\eta_i-k_i^*(\mu_i+\psi_i)$ on $t\in [k_i, k_i+\delta_i]$ and $L^*=L-\eta_{i+1}$ for $t\in [k_i+\delta_i, k_{i+1}]$.

For integers $T_0, T_m\in\Z+$ and $x_0,x_m\in \T^n$, we define
\begin{equation*}
  h^{T_0,T_m}(x_0,x_m)=\inf_{\xi}\int_{-T_0}^{T_m+k_{m-1}}L^*(\xi(s),\dot{\xi}(s),s)\,ds+\sum\limits_{i=1}^{m-1}(k_i-k_{i-1})\alpha(c_i)+T_0\alpha(c_0)+T_m\alpha(c_m),
\end{equation*}
where the infimum is taken over all absolutely continuous curves $\xi$ defined on the interval $[-T_0,T_m+k_{m-1}]$ under some boundary conditions.
By carefully setting boundary conditions and using standard arguments in variational methods, one could find that the minimizer $\gamma(t; T_0, T_m,m_0,m_1)$ of the action $h^{T_0,T_m}(x_0,x_m)$ is smooth everywhere, along which the term $k_i^*(\mu_i+\psi_i)$ would not contribute to the Euler-Lagrange equation. Hence the minimizer produces an orbit of the flow $\phi^t_L$, which  passes through the $\vep$-neighborhood of $\widetilde{\cA}(c_i)$ at some time $t=t_i$. Let $T_0, T_m\to+\infty$, we can then get an accumulation curve $\gamma(t):\R\to \T^n$ of the sequence $\{\gamma(t; T_0, T_m,m_0,m_1)\}$ such that the $\alpha$-limit set of $(d\gamma(t),t)$ lies in $\widetilde{\cA}(c)$ and the $\omega$-limit set of $(d\gamma(t),t)$ lies in $\widetilde{\cA}(c')$. This completes the proof.

\end{proof}

\medskip
\noindent{\bf Acknowledgments}
We sincerely thank the anonymous referees for their
insightful comments and valuable suggestions on improving our results. The first author was partially supported by China Postdoctoral Science Foundation (Grant No.2018M641500). The second author was partially supported by National Natural Science Foundation of China (Grant No. 11631006, No. 11790272) and a program PAPD of Jiangsu Province, China.    
\def\cprime{$'$}

\end{document}